\newcolumntype{C}[1]{>{\centering}m{#1}}
\newtheorem{theorem}{Theorem}[section]
\newtheorem{proposition}[theorem]{Proposition}
\newtheorem{lemma}[theorem]{Lemma}
\newtheorem {corollary}[theorem]{Corollary}
\theoremstyle {definition}
\newtheorem {example}[theorem]{Example}
\theoremstyle {remark}
\def\ini{\operatorname{in}}
\def\Tor{\operatorname{Tor}}
\def\deg{\operatorname{deg}}
\def\depth{\operatorname{depth}}
\def\reg{\operatorname{reg}}
\newcommand{\bP}{\ensuremath{\mathbb P}}
\newcommand{\bZ}{\ensuremath{\mathbb Z}}
\begin{document}

\title[The reduction number and degree bound of projective subschemes]{The reduction number and degree bound\\ of projective subschemes}

\author{\fontencoding{T5}\selectfont \DJ o\`an Trung C\uhorn{}\`\ohorn ng}
\address{\fontencoding{T5}\selectfont \DJ o\`an Trung C\uhorn{}\`\ohorn ng. Institute of Mathematics and the Graduate University of Science and Technology, Vietnam Academy of Science and Technology, 18 Hoang Quoc Viet, 10307 Hanoi, Vietnam.}
\email{dtcuong@math.ac.vn}

\author{Sijong Kwak}
\address{Sijong Kwak. Department of Mathematical Sciences, Korea Advanced Institute of Science and Technology, 373-1 Gusung-dong, Yusung-Gu, Daejeon, Republic of Korea}
\email{sjkwak@kaist.ac.kr}

\thanks{\fontencoding{T5}\selectfont \DJ o\`an Trung C\uhorn{}\`\ohorn ng is funded by Vietnam National Foundation for Science and Technology Development (NAFOSTED) under grant number 101.04-2015.26.}
\thanks{Sijong Kwak was supported by Basic Science Research Program through the National Research Foundation of Korea (NRF) funded by the Ministry of Science and ICT (2015R1A2A2A01004545).}

\subjclass[2010]{Primary: 14N05; secondary: 13D02}
\keywords{reduction number, degree, Castelnuovo-Mumford regularity, arithmetically Cohen-Macaulay subscheme, Betti table}

\begin{abstract}
In this paper, we prove the degree upper bound of projective subschemes in terms of the reduction number and show that the maximal cases are only arithmetically Cohen-Macaulay with linear resolutions. Furthermore, it can be shown that there are only two types of reduced, irreducible projective varieties with almost maximal degree. We also give the possible explicit Betti tables for almost maximal cases. In addition, interesting examples are provided to understand our main results.
\end{abstract}
\maketitle


\tableofcontents

\section{Introduction}

Let $X\subset \bP^{n+e}$ be a non-degenerate closed subscheme of dimension $n$ and codimension $e$ defined over an algebraically closed field $k$ of arbitrary characteristic with the ideal sheaf $\mathcal I_X$. Let $S_0=k[x_0, \ldots, x_{n+e}]$ and $R=S_0/I_X$ be the homogeneous coordinate ring  of $X$ where $I_X=\oplus_{m=0}^\infty H^0(\bP^{n+e}, \mathcal I_X(m))$ is the saturated homogeneous ideal. Among the numerical invariants of $X$ there are the degree $\deg(X)$, the Castelnuovo-Mumford regularity $\reg(R)$ and the reduction number $r(X)$ (which is defined as the reduction number of the homogeneous coordinate ring $R$). The complexity of $R$ is reflected in those invariants. The Castelnuovo-Mumford regularity is the height of the Betti table of $R$ and the reduction number $r(X)$ is the least number among the maximal degrees of minimal generators of $R$ as a module over its Noether normalizations.

There have been several results on the relations between these invariants (see \cite{NVT87}, \cite{Vas96}). For example, we always have
\begin{equation}\label{eq1}
1\leq r(X)\leq \reg(R).
\end{equation}
We can generalize the inequality (\ref{eq1}) as follows: If $R$ is $d$-regular until the $e$-th step in the minimal free resolution, then
\begin{equation}\label{eq2}
r(X)\leq d\leq \reg(R).
\end{equation}
It is interesting to mention that in many cases, the reduction number $r(X)$ is smaller than $d$ as shown in Examples \ref{34} (Ulrich's example), \ref{57} and \ref{58}.

On the other hand, the degree upper bound can be read off from the shape of Betti tables. As examples, projective varieties satisfying property $\textbf{N}_{2,p}$ for $p\ge 1$ have a degree bound (\cite{AS10}, \cite{HK12}). The multiplicity conjecture (\cite{HS98}), which was proved by Eisenbud and Schreyer, Boij and Söderberg (\cite{BS12}, \cite{ES09}, \cite{ES11}, \cite{Pe11}), gives us an upper bound with the maximal cases which are arithmetically Cohen-Macaulay with pure resolutions. More precisely, let $\beta_{ij}^{S_0}(R):=\dim_k\Tor_i^{S_0}(R, k)_{i+j}$ be the $(i,j)$-th graded Betti number and let $d_i:=\max\{i+j: \beta_{ij}^{S_0}(R)\not=0\}$ for $i=1, 2, \ldots, e$. We have a degree upper bound for projective subschemes, not necessarily arithmetically Cohen-Macaulay, namely,
$$\deg(X)\leq \frac{1}{e!}\prod_{i=1}^ed_i,$$
where equality holds if and only if $R$ is Cohen-Macaulay with a pure resolution.

Along this line, the reduction number also provides an upper bound for the degree of a projective subscheme $X\subset \bP^{n+e}$. Actually, in this paper we prove the following theorem.

\begin{theorem}\label{31}
Let $X\subset \bP^{n+e}$ be a non-degenerate closed subscheme of dimension $n$ and reduction number $r$. Then
\begin{equation}\label{eq3}
\deg(X)\leq \binom{e+r}{r}.
\end{equation}
Furthermore, $\deg(X)=\binom{e+r}{r}$ if and only if $X$ is arithmetically Cohen-Macaulay with an $(r+1)$-linear minimal free resolution.
\end{theorem}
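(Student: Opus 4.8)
The plan is to pass to an Artinian reduction and estimate its Hilbert function degree by degree. Since $k$ is infinite, a generic linear system of parameters $\ell_0, \dots, \ell_n$ gives a Noether normalization $A = k[\ell_0, \dots, \ell_n] \hookrightarrow R$ realizing the reduction number, and because minimal $A$-module generators of $R$ biject with a homogeneous $k$-basis of $\bar R := R \otimes_A k = R/(\ell_0, \dots, \ell_n)R$, the definition of $r$ via Noether normalizations gives
$$r = \max\{\, i : \bar R_i \neq 0 \,\}.$$
Thus $\bar R$ is a standard graded Artinian $k$-algebra whose socle degree is exactly $r$, and $\fa := (\ell_0, \dots, \ell_n)$ is a minimal reduction of the graded maximal ideal $\fm$ of $R$.

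First I would combine two estimates. Since $\fa$ is a minimal reduction, it preserves multiplicity, and the classical multiplicity inequality for parameter ideals gives $\deg(X) = e(\fa; R) \leq \dim_k \bar R$, with equality if and only if $R$ is Cohen-Macaulay. Next, non-degeneracy of $X$ forces $\dim_k R_1 = n + e + 1$, hence $\dim_k \bar R_1 = e$, so that $\bar R$ is a quotient of a polynomial ring $B = k[z_1, \dots, z_e]$ in $e$ variables. Consequently $\dim_k \bar R_i \leq \dim_k B_i = \binom{e-1+i}{e-1}$ for each $i$, and summing over $0 \leq i \leq r$ the hockey-stick identity yields
$$\deg(X) \leq \sum_{i=0}^{r} \binom{e-1+i}{e-1} = \binom{e+r}{r}.$$

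For the equality statement I would observe that $\deg(X) = \binom{e+r}{r}$ forces both inequalities to be equalities. Equality in the multiplicity bound shows $R$ is Cohen-Macaulay, while equality degree-by-degree shows $\bar R_i = B_i$ for $i \leq r$ and $\bar R_i = 0$ for $i > r$, that is, $\bar R = B/\mathfrak{n}^{r+1}$ with $\mathfrak{n} = (z_1, \dots, z_e)$. The truncated polynomial ring $B/\mathfrak{n}^{r+1}$ has an $(r+1)$-linear minimal free resolution, and since the $\ell_i$ form a regular sequence of linear forms in the Cohen-Macaulay case, the graded Betti numbers of $R$ over $S_0$ coincide with those of $\bar R$ over $B$; hence $R$ has an $(r+1)$-linear resolution. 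The converse runs the computation backwards: if $R$ is arithmetically Cohen-Macaulay with an $(r+1)$-linear resolution, then $\bar R$ inherits this resolution, linearity forces its defining ideal to be generated in degree $r+1$ with $\bar R_i = 0$ for $i > r$, whence $\bar R = B/\mathfrak{n}^{r+1}$, whose length is exactly $\binom{e+r}{r}$.

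The main obstacle I anticipate is definitional bookkeeping: ensuring that a single generic linear system of parameters simultaneously realizes the minimal reduction number, serves as a parameter ideal to which the multiplicity-equals-length-iff-Cohen-Macaulay criterion applies, and is a regular sequence precisely in the Cohen-Macaulay case so that Betti numbers transfer between $R$ and $\bar R$. The implication $\bar R = B/\mathfrak{n}^{r+1}$ from linearity also requires care, as one must argue that an Artinian standard graded algebra with a linear resolution and socle degree $r$ equals the full truncation; this follows from the linear resolution forcing generation of the defining ideal in the single degree $r+1$ together with the vanishing $\bar R_i = 0$ for $i > r$.
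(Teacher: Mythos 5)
Your proof is correct, and while it follows the same skeleton as the paper's --- the chain $\deg(X)\leq \dim_k \bar R\leq \binom{e+r}{r}$, where $\dim_k\bar R$ is exactly the paper's $\mu_S(R)$, with the first inequality tight precisely in the Cohen--Macaulay case and the second coming from counting the graded pieces of a quotient of $k[z_1,\dots,z_e]$ --- the supporting lemmas are genuinely different. For the inequality $\deg(X)\leq\mu_S(R)$ and its equality criterion, the paper works with the degree reverse lexicographic initial ideal and the Bermejo--Gimenez characterization of Cohen--Macaulayness (Proposition \ref{25}), whereas you invoke the multiplicity--colength inequality $e(\fa;R)\leq \ell(R/\fa R)$ for the parameter ideal $\fa$; the paper explicitly acknowledges your route (localize and apply \cite[Theorem 17.11]{Mat86}) as the ``well-known to experts'' alternative, so this part is a legitimate substitution. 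The more substantive divergence is in the equality case: the paper shows $\ini(I_X)=(x_0,\dots,x_{e-1})^{r+1}$, deduces that $I_X$ has no generators below degree $r+1$, and combines this with $\reg(I_X)=r+1$ (via Trung's Proposition \ref{21}) to force linearity, citing Eisenbud--Goto for the converse degree computation; you instead identify the Artinian reduction as the full truncation $B/\mathfrak{n}^{r+1}$ and transport its $(r+1)$-linear resolution back to $R$ along the regular sequence $\ell_0,\dots,\ell_n$, which makes both directions of the equivalence self-contained and avoids initial ideals entirely. The trade-off is that the paper's initial-ideal formulation is reused heavily later (Corollary \ref{32}, Theorems \ref{41} and \ref{51} all rest on the degrevlex description), so your argument, while cleaner in isolation, would not by itself set up the machinery the rest of the paper needs. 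One small point worth making explicit in your write-up: the claim that a single generic choice of $\ell_0,\dots,\ell_n$ simultaneously achieves the minimal reduction number $r(R)$ is the standard genericity fact also used (implicitly, via a change of variables) in the paper, and it does require $k$ infinite.
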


The second part of Theorem \ref{31} has been proven by Ahn-Kwak \cite{AK15} under the assumption that $X$ is an algebraic set satisfying the $N_{3,e}$-property and $k$ is of characteristic zero. In the present paper, in place of $N_{d,e}$-property we use the reduction number to get a better degree upper bound and give a short proof for the characterization of closed subschemes of maximal degree.

If a projective closed subscheme satisfies the equivalent conditions in Theorem \ref{31}, i.e., $\deg(X)=\binom{e+r}{r}$, we say that it has a {\it maximal degree}. Combining Theorem \ref{31} and its proof with the description of the Betti table of Cohen-Macaulay graded $k$-algebra with linear resolution due to Eisenbud-Goto \cite[Proposition 1.7]{EG84}, we obtain the following consequence.

\begin{corollary}\label{32}
Let $X\subset \bP^{n+e}$ be a non-degenerate closed subscheme of dimension $n$, codimension $e$ and reduction number $r$. Let $I_X\subset S_0$ be the defining ideal of $X$ and $R=S_0/I_X$. The following statements are equivalent:
\begin{enumerate}[(a)]
\item $X$ has a maximal degree; 
\item Suppose the natural homomorphism $S=S_e=k[x_e, \ldots, x_{n+e}] \rightarrow R$ is a Noether normalization with the reduction number $r_S(R)=r$. Then the initial ideal $\ini(I_X)$ with respect to the degree reverse lexicographic order is generated by the set of all monomials in $x_0, x_1, \ldots, x_{e-1}$ of degree $r+1$;
\item The graded Betti numbers of $X$ are
$$\beta_{ij}(X):=\beta_{ij}(R)=\begin{cases}
1 &\mbox{ if } i=j=0,\\
\binom{e+r}{i+r}\binom{i-1+r}{r}&\mbox{ if } j=r, 1\le i\le e,\\
0 &\mbox{ otherwise.}
\end{cases}
$$
\end{enumerate}
\end{corollary}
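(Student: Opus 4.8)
The plan is to use Theorem~\ref{31} as the hub: it already gives that (a) is equivalent to $R$ being Cohen-Macaulay with an $(r+1)$-linear minimal free resolution, so it suffices to connect this intermediate condition to (b) and to (c).

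For the equivalence (a) $\Leftrightarrow$ (c), I would argue as follows. If $R$ is Cohen-Macaulay with an $(r+1)$-linear resolution, then $\beta_{ij}(R)=0$ unless $j=r$ (for $i\ge 1$) or $i=j=0$, and $\pd R=e$; the explicit values $\beta_{i,r}(R)=\binom{e+r}{i+r}\binom{i-1+r}{r}$ are then forced by the Eisenbud-Goto description \cite[Proposition~1.7]{EG84} of the Betti table of a Cohen-Macaulay algebra with linear resolution (these are exactly the Betti numbers of the power $(x_0,\dots,x_{e-1})^{r+1}$). Conversely, given the Betti table in (c), all higher syzygies lie in the single row $j=r$, so the resolution is $(r+1)$-linear, and since $\beta_{e,r}\neq 0$ while $\beta_{i,r}=0$ for $i>e$ its length is $e=\pd R$; Auslander-Buchsbaum then gives $\depth R=(n+e+1)-e=n+1=\dim R$, so $R$ is Cohen-Macaulay, and Theorem~\ref{31} returns (a).

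For (a) $\Leftrightarrow$ (b), the direction (b) $\Rightarrow$ (a) is a Hilbert-function computation: passing to the initial ideal preserves the Hilbert function, so $\deg(X)=\deg\big(S_0/(x_0,\dots,x_{e-1})^{r+1}\big)$, and the latter quotient is free over $S=k[x_e,\dots,x_{n+e}]$ on the monomials in $x_0,\dots,x_{e-1}$ of degree $\le r$, of which there are $\binom{e+r}{r}$. For (a) $\Rightarrow$ (b) I would first use that $R$ is Cohen-Macaulay, so $x_e,\dots,x_{n+e}$ is a regular sequence on $R$, and then invoke the behaviour of the degree reverse lexicographic order: since the smallest variable $x_{n+e}$ is a nonzerodivisor on $R$ one has $\ini(I_X):x_{n+e}=\ini(I_X)$, so $x_{n+e}$ divides no minimal generator of $\ini(I_X)$; reducing modulo $x_{n+e}$ and descending through the regular sequence shows $\ini(I_X)$ is generated by monomials in $x_0,\dots,x_{e-1}$ alone. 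To pin the ideal down, the $(r+1)$-linear resolution forces $(I_X)_m=0$ for $m\le r$ and $\dim_k(I_X)_{r+1}=\beta_{1,r}(R)=\binom{e+r}{r+1}$; preservation of the Hilbert function transfers these to $\ini(I_X)$, whose degree-$(r+1)$ part is thus a family of $\binom{e+r}{r+1}$ monomials in the $e$ variables $x_0,\dots,x_{e-1}$. As this is exactly the total number of degree-$(r+1)$ monomials in those variables, the degree-$(r+1)$ parts coincide, so $(x_0,\dots,x_{e-1})^{r+1}\subseteq\ini(I_X)$; comparing Hilbert functions (equal by (a) $\Leftrightarrow$ (c)) promotes this containment to equality.

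I expect the main obstacle to be the direction (a) $\Rightarrow$ (b): one must combine the order-theoretic input (that the revlex initial ideal is generated away from a regular sequence sitting in the smallest variables) with the numerical bookkeeping above, and take care that the chosen coordinates genuinely realize $S=k[x_e,\dots,x_{n+e}]$ as a Noether normalization with $r_S(R)=r$, which is where general position of the coordinates enters.
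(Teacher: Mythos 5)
Your proposal is correct and follows essentially the same route as the paper: Theorem~\ref{31} is the hub, the Betti table in (c) comes from Eisenbud--Goto \cite[Proposition 1.7]{EG84}, and the identification of $\ini(I_X)$ uses the reverse lexicographic/regular-sequence argument that is exactly the content of Bermejo--Gimenez as invoked in Proposition~\ref{25}(c). The only (harmless) variation is in pinning down $\ini(I_X)$: you count its degree-$(r+1)$ piece via $\beta_{1,r}$ and the Hilbert function, whereas the paper's proof of Theorem~\ref{31} counts the $\binom{e+r}{r}$ standard monomials of degree $\le r$ furnished by $\mu_S(R)$ and Lemma~\ref{24}.
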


We then investigate the `almost maximal' cases with $\deg(X)=\binom{e+r(X)}{r(X)}-1$. To begin with, we have the following refined inequality
\begin{equation}\label{eq4}
\deg(X)\leq \mu_S(R)\leq \binom{e+r(X)}{r(X)},
\end{equation}
where $S\hookrightarrow R$ is a Noether normalization which determines the reduction number $r(X)$, and $\mu_S(R)$ is the number of minimal generators of $R$ as an $S$-module. The almost maximal cases consist of arithmetically Cohen-Macaulay subschemes with $\mu_S(R)=\binom{e+r(X)}{r(X)}-1$ and non-arithmetically Cohen-Macaulay subschemes with $\mu_S(R)=\binom{e+r(X)}{r(X)}$. In this paper, we characterize both two cases by describing the structures of the initial ideals with respect to the degree reverse lexicographic order and the syzygies of the Noether normalization $S\hookrightarrow R$ (see Theorem \ref{41} and Corollary \ref{52}). 

As an application, we obtain explicit Betti tables of those subschemes in each case by analyzing the Betti tables of the initial ideal of $I_X$, then applying the graded mapping cone sequence (Theorem \ref{26}) and the Cancellation Principle due to M. Green \cite[Corollary 1.21]{Gre98} (see also \cite[Section 3.3]{HH11}). In the first case, we note that a projective subscheme $X$ is arithmetically Cohen-Macaulay if and only if $R$ is a free $S$-module, i.e., no syzygy among generators of $R$ over $S$ (see Corollary \ref{27}). The Betti table of an arithmetically Cohen-Macaulay subscheme of almost maximal degree is obtained in the following theorem.

\begin{theorem}\label{45}
Let $X\subset \bP^{n+e}$ be a non-degenerate closed subscheme of dimension $n$ and reduction number $r$. Assume that $X$ is arithmetically Cohen-Macaulay with $\deg(X)=\binom{e+r}{r}-1$. The Betti table of the homogeneous coordinate ring of $X$ is (in the following table, we write only rows with some possibly non-zero entries)

\newpage
\begin{figure}[!htb]
\begin{tabular}{>{\centering}m{2cm}|>{\centering}m{1cm} >{\centering}m{1cm} >{\centering}m{1cm} >{\centering}m{1cm} >{\centering}m{1cm} c}
			&	0&		1&	2&	$\cdots$&		$e-1$&	$e$\\
			\hline
		0	&	1&		--&	--&	$\cdots$&	--&			--\\
		r-1	&	--&	$1$&	--&$\cdots$&	-- &--\\
		r	&	--&	$\beta_{1r}$&	$\beta_{2r}$&$\cdots$&	$\beta_{e-1,r}$ &$\beta_{e,r}$\\
\end{tabular}
\end{figure}

\noindent where for $i=1, \ldots, e$, 
$$\beta_{i,r}=\binom{e+r}{i+r}\binom{r+i-1}{r}-\binom{e}{i}.$$
\end{theorem}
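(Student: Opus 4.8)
The plan is to reduce the computation to the generic initial ideal and then transfer the answer back by a mapping cone together with Green's Cancellation Principle. First I would exploit that $X$ is arithmetically Cohen--Macaulay: by Corollary~\ref{27} the coordinate ring $R$ is a free module over the Noether normalization $S=k[x_e,\dots,x_{n+e}]$, so these $n+1$ variables form a regular sequence on $R$ and, for the degree reverse lexicographic order after a generic change of coordinates, the initial ideal $J:=\ini(I_X)$ involves only $x_0,\dots,x_{e-1}$ and is strongly stable. The set $B$ of standard monomials is a $k$-basis of $R$ over $S$, hence $|B|=\mu_S(R)=\deg(X)=\binom{e+r}{r}-1$ by the equality case of (\ref{eq4}) for aCM subschemes; since $r(X)=r$, the order ideal $B$ consists of all monomials of $\fm:=(x_0,\dots,x_{e-1})$ of degree $\le r$ with exactly one monomial of degree $r$ deleted. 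A short strong-stability argument (the aCM instance of Theorem~\ref{41}) shows that the deleted monomial must be $x_0^r$: it is the unique Borel-maximal monomial of degree $r$, and deleting any other one would force, via the moves $x_i\mapsto x_0$, a second minimal generator in degree $r$. Thus
\[
J=(x_0^r)+\fm^{r+1}.
\]

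Next I would read off the graded Betti numbers of $S_0/J$ from the Eliahou--Kervaire resolution, which applies since $J$ is strongly stable in every characteristic. Writing $m(u)$ for the largest index of a variable dividing a monomial $u$, a minimal generator $u$ of $J$ contributes $\binom{m(u)}{i-1}$ to $\beta_i(S_0/J)$ in internal degree $\deg(u)+i-1$. The generator $x_0^r$ has $m(x_0^r)=0$, so it contributes a single generator in internal degree $r$ and nothing in higher homological degree; this produces the lone entry $\beta_{1,r-1}=1$ and forces row $r-1$ to be otherwise zero. The remaining minimal generators of $J$ are exactly the degree-$(r+1)$ monomials of $\fm^{r+1}$ not divisible by $x_0^r$, i.e.\ those of $\fm^{r+1}$ with the $e$ monomials $x_0^r x_j$ $(0\le j\le e-1)$ removed. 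Since $m(x_0^r x_j)=j$ and $\sum_{j=0}^{e-1}\binom{j}{i-1}=\binom{e}{i}$, removing them subtracts $\binom{e}{i}$ from the row-$r$ Betti numbers of the maximal-degree ideal $\fm^{r+1}$ computed in Corollary~\ref{32}, yielding
\[
\beta_{i,r}(S_0/J)=\binom{e+r}{i+r}\binom{r+i-1}{r}-\binom{e}{i},\qquad 1\le i\le e.
\]

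Finally I would transfer these numbers to $R$ by building the resolution through the graded mapping cone (Theorem~\ref{26}) and invoking the Cancellation Principle: $\beta(R)$ is obtained from $\beta(S_0/J)$ by a sequence of consecutive cancellations, each subtracting the same amount from two entries of equal internal degree in adjacent homological positions. The computation above shows the nonzero entries of $\beta(S_0/J)$ lie only in rows $0$, $r-1$, $r$: the entry $\beta_{0,0}$ in internal degree $0$, the entry $x_0^r$ in internal degree $r$ and homological degree $1$, and the row-$r$ strand occupying internal degrees $r+1,\dots,r+e$, one homological degree apiece. I would then verify that every nonzero position has both of its same-internal-degree neighbours equal to zero, so that no consecutive cancellation is possible; this gives $\beta(R)=\beta(S_0/J)$ and hence the asserted table.

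The main obstacle is the structural step: establishing that the generic initial ideal is forced to be exactly $(x_0^r)+\fm^{r+1}$, rather than merely bounded by such an ideal. This rests on combining the reduction-number hypothesis, the equality $\mu_S(R)=\deg(X)$ available for aCM subschemes, and strong stability of the gin; once the initial ideal is pinned down, the Eliahou--Kervaire count and the no-cancellation check are routine.
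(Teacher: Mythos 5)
Your proposal is correct in substance and arrives at the right table, but it takes a genuinely different route from the paper. The paper never pins down which degree-$r$ monomial is missing: Theorem \ref{41}(c) only says $\ini(I_X)=(u)+\fm^{r+1}$ for \emph{some} degree-$r$ monomial $u$ in $x_0,\dots,x_{e-1}$ (here $\fm=(x_0,\dots,x_{e-1})$), and Lemma \ref{44} computes the Betti numbers of $(u)+\fm^{r+1}$ for arbitrary $u$ by splicing two short exact sequences ($0\to S_0[-r]\to I\to K/uJ\to 0$ and $0\to J[-r]\to K\to K/uJ\to 0$), an argument that is characteristic-free and indifferent to $u$. You instead normalize $u=x_0^r$ via the generic initial ideal and then read the numbers off the Eliahou--Kervaire resolution; your hockey-stick count $\sum_{j=0}^{e-1}\binom{j}{i-1}=\binom{e}{i}$ reproduces exactly the correction term of Lemma \ref{44}. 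For the transfer back to $I_X$, the paper does \emph{not} run a no-cancellation argument in this theorem (it does so only in case (c) of Theorem \ref{54}); it uses the one-sided inequality $\beta_{ij}(I_X)\le\beta_{ij}(\ini(I_X))$ merely to kill all but one term in each alternating sum $\chi_{i+r}^{S_0}(R)$, and then recovers $\beta_{i,r}$ from the mapping-cone identity $\chi_m^{S_0}(R)=\sum_j\binom{e}{j}\chi_{m-j}^{S}(R)$ of Corollary \ref{27}(a), the free $S$-module structure of $R$, and the combinatorial identity of Lemma \ref{33}. Your direct ``every nonzero entry has zero neighbours in its internal degree'' check is a legitimate and arguably cleaner alternative; I verified that the positions $(0,0)$, $(1,r-1)$ and $(i,r)$, $1\le i\le e$, admit no consecutive cancellation.

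One imprecision to repair: the assertion that the generic initial ideal is ``strongly stable in every characteristic'' is false --- in characteristic $p$ a gin is only Borel-fixed, and Eliahou--Kervaire does not apply to Borel-fixed ideals in general. Your argument survives because the char-$p$ Borel exchange (replacing the full power of a variable by a smaller-indexed one) still forces $x_0^r\in\gin(I_X)$ once a unique degree-$r$ monomial is present, and the resulting ideal $(x_0^r)+\fm^{r+1}$ is strongly stable by inspection, so Eliahou--Kervaire does apply to it. Either make that two-step argument explicit, or sidestep the issue entirely by computing $\beta\bigl((u)+\fm^{r+1}\bigr)$ for arbitrary $u$ as in Lemma \ref{44}.
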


The case of non-arithmetically Cohen-Macaulay subschemes is more complicated as these subschemes might have big projective dimension. We restrict to the smaller category of projective varieties, i.e., reduced, irreducible projective subschemes and show that if $X$ is a projective variety of almost maximal degree then $\depth(R)\geq \dim(X)$ (Theorem \ref{51}). The assumption of being projective variety is actually crucial. Using this result on arithmetic depth, we are able to describe an initial ideal of $I_X$ and as in the previous cases, we obtain explicit Betti tables for varieties of almost maximal degree as in the following theorem.

\begin{theorem}\label{54}
Let $X\subset \bP^{n+e}$ be a non-degenerate projective variety of dimension $n$, codimension $e$ and reduction number $r$.  Let $R$ be the homogeneous coordinate ring of $X$. Suppose $\deg(X)= \binom{e+r}{r}-1$ and $X$ is not arithmetically Cohen-Macaulay. Then the Betti table (over $S_0$) of $R$ has one of the following shapes (in the following tables, we write only rows with some possibly non-zero entries):

\begin{enumerate}
\item[(a)] $\reg(R)=r$:

\begin{figure}[!htb]
\begin{tabular}{>{\centering}m{1.5cm}|>{\centering}m{1cm} >{\centering}m{1cm}>{\centering}m{1cm} >{\centering}m{1cm} >{\centering}m{1cm} >{\centering}m{1cm} c}
	&	$0$	&		$1$&	$\ldots$ &	$i$&	$\ldots$ &$e$&	$e+1$\\
\hline
$0$	&	$1$	&		--&		$\ldots$	&	--&		$\ldots$	&--&		--\\
$r$	&	--&	 	$\beta_{1,r}$		&	$\ldots$	&	$\beta_{i,r}$		&	$\ldots$	&$\beta_{e,r}$&		$1$\\
\end{tabular}
\end{figure}

\noindent where for $1\leq i\leq e+1$,
$$\beta_{i,r}=\binom{e+r}{i+r}\binom{r+i-1}{r}+\binom{e}{i-1}.$$

\item[(b)] $\reg(R)=r+1$:

\begin{figure}[!htb]
\begin{tabular}{>{\centering}m{1.5cm}|>{\centering}m{1cm} >{\centering}m{1cm}>{\centering}m{1cm} >{\centering}m{1cm} >{\centering}m{1cm} >{\centering}m{1cm} c}
	&	$0$	&		$1$&	$\ldots$ &	$i$&	$\ldots$ &$e$&	$e+1$\\
\hline
$0$	&	$1$	&		--&		$\ldots$	&	--&		$\ldots$	&--&		--\\
$r$	&	--&	 	$\beta_{1,r}$		&	$\ldots$	&	$\beta_{i,r}$		&	$\ldots$	&$\beta_{e,r}$&		--\\
$r+1$	&	--&	 	$\beta_{1,r+1}$		&	$\ldots$	&	$\beta_{i,r+1}$		&	$\ldots$	&$\beta_{e,r+1}$&		$1$\\
\end{tabular}
\end{figure}

\noindent where for $1\leq i\leq e+1$,
$$\beta_{i, r}-\beta_{i-1,r+1}=\binom{e+r}{i+r}\binom{r+i-1}{r}-\binom{e}{i-2}.$$

\item[(c)] $\reg(R)>r+1$:

\begin{figure}[!htb]
\begin{tabular}{>{\centering}m{1.5cm}|>{\centering}m{1cm} >{\centering}m{1cm}>{\centering}m{1cm} >{\centering}m{1cm} >{\centering}m{1cm} >{\centering}m{1cm} c}
	&	$0$	&		$1$&	$\ldots$ &	$i$&	$\ldots$ &$e$&	$e+1$\\
\hline
$0$	&	$1$	&		--&		$\ldots$	&--&		$\ldots$	&	--&		--\\
$r$	&	--&	 	$\beta_{1r}$		&	$\ldots$	&$\beta_{ir}$		&	$\ldots$	&	$\beta_{e,r}$&		--\\
$\reg(R)$	&	--&	 	$\binom{e}{0}$		&	$\ldots$	&$\binom{e}{i-1}$		&	$\ldots$	&	$\binom{e}{e-1}$&		$\binom{e}{e}$\\
\end{tabular}
\end{figure}

\noindent where for $1\leq i\leq e+1$,
$$\beta_{i,r}=\binom{e+r}{i+r}\binom{i+r-1}{r},$$
$$\beta_{i,\reg(R)}=\binom{e}{i-1}.$$
\end{enumerate}
\end{theorem}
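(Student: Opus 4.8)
The plan is to reduce everything to the single syzygy of $R$ over the Noether normalization $S=k[x_e,\dots,x_{n+e}]$ and then transport the resulting $S$-resolution to $S_0$ by a mapping cone. First, combine Theorem \ref{51} with the hypothesis that $X$ is not arithmetically Cohen–Macaulay: by Corollary \ref{27} the latter means $R$ is not $S$-free, hence $\depth R\le n$, while Theorem \ref{51} gives $\depth R\ge n$; so $\depth R=n$ and, by Auslander–Buchsbaum, $\pd_{S_0}R=e+1$ and $\pd_S R=1$. Thus the minimal $S$-free resolution is $0\to F_1\to F_0\to R\to 0$ with $F_0=\bigoplus_i S(-i)^{g_i}$ and $F_1$ free. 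Since $X$ is a variety, $R$ is $S$-torsion-free, so $\operatorname{rank}F_1=\mu_S(R)-\deg X$; using $\mu_S(R)=\binom{e+r}{r}$ (the non-ACM branch of the dichotomy following \eqref{eq4}) and $\deg X=\binom{e+r}{r}-1$, we get $\operatorname{rank}F_1=1$, i.e. $F_1=S(-a)$ for a single degree $a$. Moreover $\mu_S(R)=\binom{e+r}{r}$ together with reduction number $r$ forces the maximal generator count $g_i=\binom{e-1+i}{i}$ for $0\le i\le r$; equivalently $(x_0,\dots,x_{e-1})^{r+1}\subseteq\ini(I_X)$ with the standard monomials in $x_0,\dots,x_{e-1}$ being exactly those of degree $\le r$, as in Corollary \ref{32}. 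So only the degree $a$ of the unique syzygy remains to be determined.

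Second, read off the regularity from the length-one $S$-resolution: since $\reg R$ is intrinsic (the same computed over $S$ or $S_0$), $\reg R=\max_j(\max\deg F_j-j)=\max(r,\,a-1)$. The three cases are therefore governed entirely by $a$: one has $\reg R=r\iff a\le r+1$, $\reg R=r+1\iff a=r+2$, and $\reg R=a-1>r+1\iff a\ge r+3$. Granting the bound $a\ge r+1$ (the crux, discussed below), case (a) is exactly $a=r+1$, case (b) is $a=r+2$, and case (c) is $a\ge r+3$. In terms of the reverse-lex initial ideal this reads $\ini(I_X)=(x_0,\dots,x_{e-1})^{r+1}$ together with one extra minimal generator involving $x_e$ but no variable beyond $x_e$ (which is exactly what pins $\depth R=n$), and whose degree records $a$.

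Third, compute the $S_0$-Betti table by a mapping cone. Writing $S=S_0/(x_0,\dots,x_{e-1})$ and resolving every free $S$-module by the Koszul complex on $x_0,\dots,x_{e-1}$ (Betti numbers $\binom{e}{p}$), the mapping cone of a comparison lift of $S(-a)\to F_0$ (Theorem \ref{26}) is an $S_0$-free resolution of $R$. After minimizing its $F_0$-part — which is a non-minimal $S_0$-resolution of $S_0/(x_0,\dots,x_{e-1})^{r+1}$ and so collapses to the $(r+1)$-linear strand of Corollary \ref{32}(c), $\beta_{i,r}=\binom{e+r}{i+r}\binom{i+r-1}{r}$ in row $r$ — the complex carries exactly two strands: this linear strand in row $r$, and the twisted Koszul strand from $S(-a)$, shifted up one homological step, contributing $\binom{e}{i-1}$ in row $a-1$ across columns $1,\dots,e+1$ (the trailing $\binom{e}{e}=1$ at column $e+1$ is the distinguished $1$ in each table). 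Green's Cancellation Principle \cite{Gre98} now finishes the computation, since two strands interact only in equal internal degrees at consecutive homological steps, i.e. only when their rows are adjacent. If $a-1\ge r+2$ (case (c)) the rows $r$ and $a-1$ are non-adjacent, so no cross-cancellation occurs and both strands survive verbatim. If $a-1=r$ (case (a)) the two strands lie in the same row and simply add, giving $\beta_{i,r}=\binom{e+r}{i+r}\binom{i+r-1}{r}+\binom{e}{i-1}$. If $a-1=r+1$ (case (b)) rows $r$ and $r+1$ are adjacent and consecutive cancellation occurs between $\beta_{i,r}$ and $\beta_{i-1,r+1}$ in common internal degree $i+r$; the cancelled amount is not determined, but the difference $\beta_{i,r}-\beta_{i-1,r+1}=\binom{e+r}{i+r}\binom{i+r-1}{r}-\binom{e}{i-2}$ is cancellation-invariant, which is precisely the content of (b).

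The main obstacle is the bound $a\ge r+1$: that the unique syzygy genuinely involves the top-degree ($i=r$) generators, so that the Koszul strand sits in row $a-1\ge r$ and never below row $r$. This is exactly where the hypothesis that $X$ is a \emph{variety}, rather than merely a scheme with $\depth R=n$, is indispensable — as the text stresses — since $\depth R=n$ alone would also permit $a\le r$ and hence a fourth shape with entries in rows below $r$. The plan is to rule this out by analyzing the reverse-lex initial ideal directly: the extra generator lies in $k[x_0,\dots,x_e]$, and one argues from reducedness and irreducibility of $X$ (exploiting the finite-length deficiency module $H^n_{\mathfrak m}(R)$ produced along the proof of Theorem \ref{51}) that it cannot have degree $\le r$ without forcing a second independent syzygy or violating $\mu_S(R)=\binom{e+r}{r}$. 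Once $a\ge r+1$ is secured, the trichotomy of paragraph two and the three Betti tables follow from the mapping-cone-and-cancellation computation above, and verifying the explicit binomial entries is a routine identity check.
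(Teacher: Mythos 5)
Your skeleton is right and close in spirit to the paper's: both arguments reduce to the length-one $S$-resolution $0\to S(-a)\to F_0\to R\to 0$ coming from Theorem \ref{51} (packaged in the paper as Corollary \ref{52}(e)), and your final trichotomy in $a$, the two-strand picture, and the observation that in case (b) only the difference $\beta_{i,r}-\beta_{i-1,r+1}$ is determined, all match the paper. But the transport step from $S$ to $S_0$ is not valid as you describe it. You write $S=S_0/(x_0,\dots,x_{e-1})$, resolve $F_0$ and $S(-a)$ over $S_0$ by Koszul complexes, and take the mapping cone of a lift of $S(-a)\to F_0$. However $R$ is a module over the \emph{subring} $S\subset S_0$, not over the quotient $S_0/(x_0,\dots,x_{e-1})$: the variables $x_0,\dots,x_{e-1}$ act nontrivially on $R$, the surjection $F_0\to R$ is only $S$-linear, and the free $S$-modules $F_0$, $S(-a)$ carry no $S_0$-structure making it $S_0$-linear, so the cone you build does not resolve $R$ over $S_0$. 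Theorem \ref{26}, which you cite, is a different device (a long exact sequence relating $\Tor^{S_1}$ and $\Tor^{S_0}$ of the \emph{same} module); iterating it yields only the Euler-characteristic identity $\chi_m^{S_0}(R)=\sum_j\binom{e}{j}\chi_{m-j}^{S}(R)$ of Corollary \ref{27}(a), not individual Betti numbers. This is precisely why the paper needs an independent source of vanishing: it identifies $\ini(I_X)=(x_0,\dots,x_{e-1})^{r+1}+(uv)$ via Corollary \ref{52}, computes $\beta^{S_0}_{ij}(\ini(I_X))$ by a Betti splitting (Lemma \ref{53}), uses Green's Cancellation Principle to kill every row other than $r$ and $\reg(R)$, and only then extracts the entries from the Euler characteristics together with Lemma \ref{33} (cases (a), (b)) or from non-adjacency of the two strands (case (c)). Your picture can be made rigorous, e.g. as the two-row spectral sequence $H_p(x_0,\dots,x_{e-1};\Tor^S_q(R,k))\Rightarrow\Tor^{S_0}_{p+q}(R,k)$ whose only differential preserves internal degree and hence vanishes unless $a=r+2$, but that construction is not what you wrote down, and without it (or the paper's initial-ideal route) the vanishing outside rows $r$ and $\reg(R)$ is unproved.

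The second gap is the bound $a\ge r+1$, which you correctly flag as the crux but leave as a plan. The paper proves it, and by a different mechanism than the one you sketch: Step 1 of the proof of Theorem \ref{51} shows, purely from the degree count $\deg(X)=\binom{e+r}{r}-1$ versus $\mu_S(R)=\binom{e+r}{r}$, that every extra minimal generator of $\ini(I_X)$ beyond $(x_0,\dots,x_{e-1})^{r+1}$ has the form $uv_i$ with $u$ of degree \emph{exactly} $r$; Step 2 then shows the classes of the $\binom{e+r}{r}-1$ standard monomials other than $u$ are $S$-linearly independent in $R$, so the unique syzygy must involve $\bar u$ with a coefficient in $S_+$, whence $a\ge r+1$. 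Neither step uses reducedness or irreducibility; the variety hypothesis is consumed elsewhere, namely in Step 3 (primality of $I_X$ forces a \emph{single} extra generator $uv$, hence a single syzygy and $\depth(R)=n$), which you are already invoking through Theorem \ref{51}. So your proposed route to $a\ge r+1$ via "reducedness and irreducibility ... and the deficiency module" points at the wrong hypothesis, and as it stands the fourth shape with entries below row $r$ is not excluded.
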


We also provide interesting examples with all possible Betti tables to illustrate our main results. 

About the structure of this paper, we prove Theorem \ref{31} on degree upper bound with a characterization of the maximal cases and give further consequences of this theorem in Section 3. Projective subschemes of almost maximal degree are studied in the next two sections. Theorem \ref{45} on Betti tables of arithmetically Cohen-Macaulay subschemes of almost maximal degree is proved in Section 4. In Section 5 we focus on projective varieties of almost maximal degree which are not arithmetically Cohen-Macaulay. Theorem \ref{54} is proved in this section. 

In this paper, the Betti tables are computed by using Macaulay 2 \cite{GS}.

\medskip
\noindent{\bf Acknowledgments.} The authors thank the anonymous referee for careful reading and many useful comments which help to improve the presentation of the paper. The first author thanks
\fontencoding{T5}\selectfont Nguy\~\ecircumflex n \DJ\abreve ng H\d\ohorn p
for useful discussion on Macaulay 2 and graded Betti numbers of monomial ideals, in particular, for suggesting the short proof of Lemma \ref{44}. This work has been done during the visit of \fontencoding{T5}\selectfont \DJ o\`an Trung C\uhorn{}\`\ohorn ng to the Korea Advanced Institute of Science and Technology (KAIST). He thanks Professor Sijong Kwak and KAIST for support and hospitality during his visit. The second author would like to thank KAIST Grand Challenge 30 Project for financial support which has been the basis of his research activities.


\section{Preliminaries}
\subsection{Reduction number}
Throughout this paper, $k$ is an infinite field. Let $S_i=k[x_i, \ldots, x_{n+e}]$ be the polynomial ring over $k$, for $i=0, 1, \ldots, n+e$. Let $I\subset S_0$ be a homogeneous ideal of codimension $e$ and put $R=S_0/I$. The irrelevant homogeneous ideal of $R$ is denoted by $R_+$. Let $J$ be a minimal homogeneous reduction of $R_+$. The reduction number of $R_+$ with respect to $J$ is
$$r_J(R_+)=\min\{t>0: R_+^{t+1}=JR_+^t\}=\min\{t>0: R_{t+1}=J_{t+1}\}.$$
This number is a measure for the complexity of the algebra $R$. It has deep relations with other invariants of the same type such as the Castelnuovo-Mumford regularity and the a-invariant. The latter is defined by
$$a_{n+1}(R)=\max\{t: H^{n+1}_{R_+}(R)_t\not=0\}+1.$$
N.V. Trung gave the following comparison.

\begin{proposition}{\cite[Proposition 3.2]{NVT87}} \label{21} We have $a_{n+1}(R)+n\leq r_J(R_+)\leq \reg(R)$. In addition, if $R$ is Cohen-Macaulay, then $a_{n+1}(R)+n= r_J(R_+)=\reg(R)$.
\end{proposition}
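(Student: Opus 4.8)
The plan is to pass to a generic Artinian reduction and to read off all three invariants from the behaviour of local cohomology under cutting by general linear forms. Since $k$ is infinite, the minimal reduction $J$ of $R_+$ is generated by $d:=\dim R=n+1$ general linear forms $\ell_1,\dots,\ell_d$, which form a system of parameters; moreover they can be chosen to be a filter-regular sequence, so that each $\ell_{i+1}$ is a nonzerodivisor in all large degrees on $R^{(i)}:=R/(\ell_1,\dots,\ell_i)$. Set $\bar R:=R^{(d)}=R/JR$; this is an Artinian standard graded algebra, and since $R_{t+1}=J_{t+1}$ holds precisely when $\bar R_{t+1}=0$, one has $r_J(R_+)=\max\{t:\bar R_t\neq 0\}=:\operatorname{end}(\bar R)$. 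I abbreviate $\alpha_i(M):=\operatorname{end} H^i_{\fm}(M)$ for the graded maximal ideal $\fm$, so that the quantity appearing in the statement is $a_{n+1}(R)+n=(\alpha_{n+1}(R)+1)+n=\alpha_d(R)+d$, while $\reg(R)=\max_i\{\alpha_i(R)+i\}$.

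For the inequality $r_J(R_+)\le\reg(R)$ I will show that cutting by a filter-regular linear form $\ell$ cannot increase regularity, i.e. $\reg(M/\ell M)\le\reg(M)$. Splitting $0\to(0:_M\ell)(-1)\to M(-1)\xrightarrow{\ell}M\to M/\ell M\to 0$ into two short exact sequences and invoking the standard estimates for regularity along short exact sequences reduces this to the observation that the finite-length module $(0:_M\ell)$ lies in $H^0_{\fm}(M)$, whence $\operatorname{end}(0:_M\ell)\le\alpha_0(M)\le\reg(M)$. Iterating over $\ell_1,\dots,\ell_d$ gives $\reg(\bar R)\le\reg(R)$, and as $\bar R$ is Artinian we have $\reg(\bar R)=\operatorname{end}(\bar R)=r_J(R_+)$, proving the bound.

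For the lower bound I establish the complementary increment on the top $a$-invariant: if $\ell$ is a filter-regular parameter on $M$ with $\dim M=m\ge1$, then $\alpha_{m-1}(M/\ell M)\ge\alpha_m(M)+1$. After replacing $M$ by $M/H^0_{\fm}(M)$---which leaves $H^{\,i}_{\fm}$ unchanged for $i\ge1$ and only enlarges the relevant top degree after cutting---the form $\ell$ becomes a nonzerodivisor, and the long exact local cohomology sequence of $0\to M(-1)\xrightarrow{\ell}M\to M/\ell M\to0$ yields a surjection of $H^{m-1}_{\fm}(M/\ell M)$ onto $\ker\big(\ell:H^m_{\fm}(M)(-1)\to H^m_{\fm}(M)\big)$; reading this in degree $\alpha_m(M)+1$, where the kernel is nonzero, forces $\alpha_{m-1}(M/\ell M)\ge\alpha_m(M)+1$. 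Running this down the chain $R=R^{(0)},\dots,R^{(d)}=\bar R$ accumulates $d$ units and gives $\operatorname{end}(\bar R)=\alpha_0(\bar R)\ge\alpha_d(R)+d=a_{n+1}(R)+n$. When $R$ is Cohen-Macaulay, all $H^i_{\fm}(R)$ with $i<d$ vanish, so $\reg(R)=\alpha_d(R)+d$ coincides with this lower bound, and the two displayed inequalities collapse to equalities.

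The most delicate point will be the $a$-invariant increment, in particular verifying that filter-regularity makes $\ell$ a genuine nonzerodivisor after stripping off $H^0_{\fm}$, and controlling the finite-length discrepancy between $M/\ell M$ and $(M/H^0_{\fm}(M))/\ell(\cdots)$ at the bottom of the chain (where $m=1$ and the quotient is already Artinian), since it is there that the top degree of $\bar R$ must be compared with $\alpha_1$ of the preceding module. The regularity-drop lemma is routine but still demands care with the degree shifts $(-1)$ in the two short exact sequences.
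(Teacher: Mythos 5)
Your argument is correct and complete in outline: the identification $r_J(R_+)=\max\{t:(R/JR)_t\neq 0\}$, the fact that cutting by a filter-regular linear form does not increase regularity, and the unit increment of the top $a$-invariant at each cut together yield both inequalities, with the Cohen--Macaulay case following from the vanishing of the lower local cohomology modules. The paper offers no proof of its own---it simply cites \cite[Proposition 3.2]{NVT87}---and your route via filter-regular sequences and graded local cohomology is essentially the standard (and essentially Trung's original) argument.
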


On the other hand, the ideal $J$ is minimally generated by $n+1$ linear forms and after a change of variables, we may assume that $J=(x_e, \ldots, x_{n+e})$. Let $S=S_e=k[x_e, \ldots, x_{n+e}]$. The natural homomorphism $S\rightarrow R$ is in fact an inclusion and is a Noether normalization of $R$, i.e., $S$ is a polynomial $k$-algebra and $R$ is a finitely generated $S$-module. The reduction number can be computed by the maximal degree of minimal generators of $R$ over $S$. Namely,

\begin{lemma}{\cite[Proposition 5.1.3]{Vas94}}\label{22}
Let $b_1, \ldots, b_s$ be a minimal set of homogeneous generators of $R$ as an $S$-module, then
$$r_J(R_+)=\max\{\deg(b_1), \ldots, \deg(b_s)\}.$$
\end{lemma}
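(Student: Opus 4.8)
The plan is to translate both sides of the claimed equality into statements about the finite-dimensional graded $k$-vector space $\bar R := R/JR$, and then compare them degree by degree. The whole argument rests on two observations: that $JR$ is exactly the ideal generated by the maximal homogeneous ideal of $S$, and that $\bar R$ is a standard graded $k$-algebra.

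First I would record what the minimal generators measure. Since $J=(x_e,\dots,x_{n+e})$ generates the maximal homogeneous ideal $S_+$ of $S$, we have $JR=S_+R$, so $\bar R=R/S_+R=R\otimes_S k$. By the graded Nakayama lemma, homogeneous elements $b_1,\dots,b_s\in R$ form a minimal set of generators of $R$ over $S$ if and only if their images form a $k$-basis of $\bar R$; in each degree $d$ the number of the $b_i$ with $\deg(b_i)=d$ equals $\dim_k\bar R_d$. In particular the top degree is intrinsic, namely
$$\max\{\deg(b_1),\dots,\deg(b_s)\}=\max\{d:\bar R_d\neq 0\}=:M,$$
independently of the chosen minimal generating set.

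Next I would rewrite the reduction number in the same language. Because $(JR)_{t+1}=J_{t+1}$, the degree $t+1$ piece of $\bar R$ is $\bar R_{t+1}=R_{t+1}/J_{t+1}$, so the defining condition $R_{t+1}=J_{t+1}$ is precisely $\bar R_{t+1}=0$, giving $r_J(R_+)=\min\{t>0:\bar R_{t+1}=0\}$. The crucial structural point is that $\bar R$ is the quotient of the standard graded algebra $R$ by the ideal $JR$, which is generated in degree one; hence $\bar R$ is itself standard graded, so $\bar R_m=\bar R_1\,\bar R_{m-1}$ for all $m$. This forces $\{d:\bar R_d\neq 0\}$ to be the initial segment $\{0,1,\dots,M\}$: once a graded piece vanishes, so do all higher ones. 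Therefore $\bar R_{t+1}=0$ holds exactly when $t\ge M$, and (assuming $R\neq S$, so that $M\ge 1$) the least positive such $t$ is $M$, yielding $r_J(R_+)=M=\max_i\deg(b_i)$.

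The step I expect to be the main obstacle — or at least the one deserving care — is the passage between the ideal-power form $R_+^{t+1}=JR_+^t$ and the single-degree condition $\bar R_{t+1}=0$. A priori $\bar R_{t+1}=0$ controls only degree $t+1$, whereas $R_+^{t+1}=JR_+^t$ demands $R_m=J_1R_{m-1}$ in every degree $m\ge t+1$; the standard-gradedness of $\bar R$ is exactly what upgrades vanishing in one degree to vanishing in all higher degrees and closes this gap. The sole degenerate case is $R=S$ (i.e.\ $X$ a linear subspace), where $M=0$ but the convention $r\ge 1$ applies; for non-degenerate $X$ of positive codimension this does not occur.
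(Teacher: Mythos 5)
The paper offers no proof of this lemma; it is quoted directly from Vasconcelos \cite[Proposition 5.1.3]{Vas94}, and your argument is precisely the standard proof of that result. Both of your key reductions are sound: graded Nakayama identifies $\max_i\deg(b_i)$ with the top nonvanishing degree of $\bar R=R/JR$, and the standard-gradedness of $\bar R$ (a quotient of $R$ by an ideal generated in degree one) correctly upgrades the single-degree condition $R_{t+1}=J_{t+1}$ to vanishing in all higher degrees, while your remark on the degenerate case $R=S$ is consistent with the paper's standing non-degeneracy hypothesis.
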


The reduction number of $R$ is the least number among $r_J(R_+)$ for all minimal reduction $J$ of $R_+$ and will be denoted by $r(R)$.

Sometimes we also call $r_J(R_+)$ the reduction number of the Noether normalization $S\hookrightarrow R$ and denote it by $r_S(R)$. The finite $S$-algebra structure on $R$ is particularly interesting and should provide an effective way to understand the structure of the algebra. We have some very first properties of minimal sets of generators of $R$ over $S$.

\begin{proposition}\label{23}
The minimal number of generators of $R$ over $S$ is bounded by
$$\mu_S(R)\leq \binom{e+r_S(R)}{r_S(R)}.$$
\end{proposition}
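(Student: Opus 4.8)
The plan is to reduce the statement to a Hilbert-function estimate for an Artinian quotient of a polynomial ring in exactly $e$ variables. By the graded Nakayama lemma, the minimal number of generators of $R$ as an $S$-module equals the $k$-dimension of the special fibre $R/JR$, where $J=(x_e,\ldots,x_{n+e})$ is the reduction determining $r_S(R)$; that is, $\mu_S(R)=\dim_k(R/JR)$. So the first step is to identify this special fibre explicitly and pin down the degrees in which it is nonzero.

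Since $J$ is generated by $x_e,\ldots,x_{n+e}$, the quotient $R/JR=S_0/(I+J)$ is naturally a graded quotient of the polynomial ring $T:=k[x_0,\ldots,x_{e-1}]$ in exactly $e$ variables, namely $R/JR\cong T/\bar I$, where $\bar I$ is the image of $I$ under the substitution $x_e=\cdots=x_{n+e}=0$. The key input from the reduction number is that this algebra is concentrated in low degrees: by definition of $r:=r_S(R)$ we have $R_m=J_m$ for all $m\geq r+1$ (the equality $R_{t+1}=J_{t+1}$ propagates to all higher degrees because $R_{m+1}=R_1R_m$), and hence $(R/JR)_m=0$ for $m>r$.

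Combining these, I would bound each graded piece by the corresponding piece of the free polynomial ring, $\dim_k(R/JR)_i\leq\dim_k T_i=\binom{e-1+i}{e-1}$ for $0\leq i\leq r$ and $0$ otherwise, then sum over $i$ and apply the hockey-stick identity $\sum_{i=0}^{r}\binom{e-1+i}{e-1}=\binom{e+r}{e}=\binom{e+r}{r}$. This yields $\mu_S(R)=\sum_{i=0}^{r}\dim_k(R/JR)_i\leq\binom{e+r}{r}$, as claimed.

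The computation itself is routine; the only points requiring care are the two structural identifications—that $\mu_S(R)$ is computed by the special fibre (graded Nakayama), and that the reduction number is exactly the top nonvanishing degree of that fibre. Neither presents a genuine obstacle, but getting the latter right is essential, since it is the reduction-number hypothesis (rather than any regularity assumption) that forces the truncation at degree $r$ and thereby produces this particular binomial bound. I would also remark in passing that equality throughout forces $\dim_k(R/JR)_i=\binom{e-1+i}{e-1}$ for every $i\leq r$, i.e. $\bar I$ has no generators in degree $\leq r$, which foreshadows the linear-resolution characterization of the maximal case appearing in Theorem \ref{31}.
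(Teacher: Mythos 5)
Your proof is correct and takes essentially the same route as the paper: the paper simply observes that $R$ is generated over $S$ by the monomials in $x_0,\ldots,x_{e-1}$ of degree at most $r_S(R)$ and counts them, which is exactly your Nakayama/special-fibre argument with the hockey-stick sum replaced by the direct count $\binom{e+r}{r}$ of such monomials. Your extra remark on the equality case is consistent with what the paper later exploits in Theorem \ref{31}.
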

\begin{proof}
As an $S$-module, $R$ is generated by all monomials in $x_0, \ldots, x_{e-1}$ of degree at most $r_S(R)$. There are $\binom{e+r_S(R)}{r_S(R)}$ such monomials and the inequality follows.
\end{proof}

We can describe precisely a minimal set of monomials generating $R$ considered as an $S$-module. To do this, we  fix the degree reverse lexicographic order on the monomial of $S_0$. Then $R$ is minimally generated over $S$ by monomials which formulate a basis of the $k$-vector space $S_0/I+(x_e, \ldots, x_{n+e})$. These are exactly the standard monomials with respect to the ideal $I_X+(x_e, \ldots, x_{n+e})$. Recall that a monomial in $S_0$ is a standard monomial with respect to a homogeneous ideal $J$ if it is not contained in the initial ideal of $J$. We sum up in the following lemma.

\begin{lemma}\label{24}
Let $S=S_e=k[x_e, \ldots, x_{n+e}]$ and suppose that the natural map $S\rightarrow R$ is a Noether normalization of $R$. Then $R$ is minimally generated as a module over $S$ by the set
$$\{\overline{m}\in R: m\in S_0 \text{ is a monomial in $x_0, \ldots, x_{e-1}$ which is standard with respect to } I_X+(x_e, \ldots, x_{n+e})\}.$$
\end{lemma}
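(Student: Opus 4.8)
The plan is to reduce the statement to the graded Nakayama lemma and then invoke the standard fact that the monomials lying outside an initial ideal form a $k$-basis of the quotient ring. Since $R$ is a finitely generated graded module over the polynomial ring $S$, whose irrelevant ideal is $\fm_S=(x_e, \ldots, x_{n+e})S$, a set of homogeneous elements of $R$ is a minimal system of generators of $R$ over $S$ if and only if their images form a $k$-basis of $R\otimes_S S/\fm_S = R/\fm_S R$. So the task reduces to exhibiting a natural $k$-basis of this last vector space and lifting it back.

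First I would identify $R/\fm_S R$ explicitly. Since $R=S_0/I_X$ and $\fm_S R=(x_e, \ldots, x_{n+e})R$, there is a canonical isomorphism
$$R/\fm_S R\cong S_0/\big(I_X+(x_e, \ldots, x_{n+e})\big).$$
This is a quotient of $S_0$ by a homogeneous ideal, and I would fix the degree reverse lexicographic order on the monomials of $S_0$. Next I would invoke the general principle (Macaulay's theorem) that, for any homogeneous ideal $J\subset S_0$, the monomials standard with respect to $J$, i.e. those not lying in $\ini(J)$, descend to a $k$-basis of $S_0/J$. Applying this with $J=I_X+(x_e, \ldots, x_{n+e})$ yields a $k$-basis of $R/\fm_S R$ indexed by the standard monomials of this ideal.

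It then remains to observe that every such standard monomial is a monomial in $x_0, \ldots, x_{e-1}$ alone: because each $x_j$ with $j\geq e$ already lies in $J$, its leading term $x_j$ belongs to $\ini(J)$, so no standard monomial can be divisible by any of $x_e, \ldots, x_{n+e}$. Lifting this basis back to $R$ via graded Nakayama gives the conclusion: the residue classes $\overline{m}$, as $m$ runs over the standard monomials in $x_0, \ldots, x_{e-1}$ with respect to $I_X+(x_e, \ldots, x_{n+e})$, form a minimal generating set of $R$ as an $S$-module. I do not anticipate a serious obstacle; the only points requiring care are the clean identification of $R\otimes_S k$ with the quotient of $S_0$ and the verification that $\ini(J)$ absorbs the variables $x_e, \ldots, x_{n+e}$, which is immediate from their membership in $J$.
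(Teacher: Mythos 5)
Your argument is correct and follows the same route the paper takes (the paper proves this lemma only by the short discussion preceding its statement): identify the minimal generators via graded Nakayama with a $k$-basis of $R/(x_e,\ldots,x_{n+e})R\cong S_0/(I_X+(x_e,\ldots,x_{n+e}))$, and recognize that basis as the standard monomials, which necessarily involve only $x_0,\ldots,x_{e-1}$. You have merely spelled out the details (Macaulay's theorem and the absorption of the variables $x_e,\ldots,x_{n+e}$ into the initial ideal) more explicitly than the paper does.
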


Related to this lemma, the following criterion for Cohen-Macaulayness is very useful in the next sections.

\begin{proposition}\label{25}
Let $I\subset S_0$ be a homogeneous ideal and $R=S_0/I$. Suppose $S=S_e=k[x_e, \ldots, x_{n+e}]\rightarrow R$ is a Noether normalization. We fix the degree reverse lexicographic order on the monomial of $S_0$. Then 
$$\deg(R)\leq \mu_S(R).$$
Furthermore, the following statements are equivalent:
\begin{enumerate}[(a)]
\item $R$ is Cohen-Macaulay;
\item $\deg(R)=\mu_S(R)$;
\item The initial ideal $\ini(I)$ is minimally generated by a set of monomials in $x_0, \ldots, x_{e-1}$.
\end{enumerate}
\end{proposition}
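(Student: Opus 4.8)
The plan is to establish the inequality first and then prove the chain of equivalences (a) $\Rightarrow$ (c) $\Rightarrow$ (b) $\Rightarrow$ (a), using throughout that $\dim R=\dim S=n+1$ and that $S$ is a polynomial ring. For the inequality I would pass to the generic fibre over $S$. Since $S\hookrightarrow R$ is module-finite and $S$ is a domain, the multiplicity $\deg(R)$ equals the generic rank $\dim_{\Frac(S)}\bigl(R\otimes_S\Frac(S)\bigr)$: indeed $R$ admits a finite graded free resolution over $S$, so its Hilbert series is $Q(t)/(1-t)^{n+1}$ with $Q(1)$ equal both to $\deg(R)$ and to the alternating sum of ranks in the resolution, i.e. the generic rank. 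A choice of $\mu_S(R)$ homogeneous generators gives a surjection $S^{\mu_S(R)}\twoheadrightarrow R$ that stays surjective after $-\otimes_S\Frac(S)$, so the generic rank is at most $\mu_S(R)$, giving $\deg(R)\le\mu_S(R)$.

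For (a) $\Leftrightarrow$ (b) the key point is that $R$ is Cohen--Macaulay if and only if it is a free $S$-module. Since $\dim R=\dim S$, the $S$-module $R$ is maximal Cohen--Macaulay exactly when $R$ is Cohen--Macaulay as a ring; and because $S$ is regular, Auslander--Buchsbaum shows that a maximal Cohen--Macaulay graded $S$-module has zero projective dimension, hence is free. If $R$ is free, its generic rank equals the number of summands, namely $\mu_S(R)$, so $\deg(R)=\mu_S(R)$. Conversely, if $\deg(R)=\mu_S(R)$, take a minimal presentation $0\to K\to S^{\mu_S(R)}\to R\to 0$; comparing generic ranks gives $\operatorname{rank}_S(K)=\mu_S(R)-\deg(R)=0$, and since $K$ is a submodule of a free module it is torsion-free, whence $K=0$ and $R$ is free, thus Cohen--Macaulay.

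The equivalence with (c) is where the degree reverse lexicographic order enters. For (c) $\Rightarrow$ (a): if $\ini(I)$ is generated by monomials in $x_0,\dots,x_{e-1}$ alone, then $S_0/\ini(I)\cong\bigl(k[x_0,\dots,x_{e-1}]/\fc\bigr)\otimes_k S$, where $\fc$ is generated by those monomials, is a free $S$-module and hence Cohen--Macaulay; the standard inequality $\depth(S_0/\ini(I))\le\depth(R)$ then forces $R$ to be Cohen--Macaulay. For (a) $\Rightarrow$ (c) I would use that $x_e,\dots,x_{n+e}$, being a Noether normalization of the Cohen--Macaulay ring $R$, form a regular sequence on $R$. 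These are precisely the $n+1$ smallest variables in the chosen order, so by the Bayer--Stillman behaviour of the degree reverse lexicographic order the property of being a nonzerodivisor on the smallest variable transfers between $R$ and $S_0/\ini(I)$ and is compatible with reduction modulo that variable; iterating, $x_e,\dots,x_{n+e}$ is a regular sequence on the monomial quotient $S_0/\ini(I)$ as well. For a monomial ideal this means no minimal generator of $\ini(I)$ is divisible by any of $x_e,\dots,x_{n+e}$, which is exactly (c). Finally (c) $\Rightarrow$ (b) closes the cycle: under (c) the free $S$-module $S_0/\ini(I)$ has rank equal to the number of standard monomials in $x_0,\dots,x_{e-1}$, which is $\mu_S(R)$ by Lemma \ref{24}, and $\deg(R)=\deg(S_0/\ini(I))$ equals this rank.

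The main obstacle is the transfer of the regular sequence in the step (a) $\Rightarrow$ (c): one must justify that a regular sequence consisting of the smallest variables on $R$ remains regular on the initial algebra $S_0/\ini(I)$. This rests on the special feature of the degree reverse lexicographic order that the last variable is a nonzerodivisor modulo $I$ precisely when it is modulo $\ini(I)$, together with the compatibility of initial ideals with reduction by that variable; the remaining steps are routine comparisons of generic ranks and Hilbert functions.
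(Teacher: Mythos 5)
Your proof is correct, but it is assembled quite differently from the paper's. The paper works entirely with the initial ideal: it writes the minimal monomial generators of $\ini(I)$ as $w_1,\dots,w_t,u_1v_1,\dots,u_sv_s$ with $w_i,u_i$ in $x_0,\dots,x_{e-1}$ and $v_i$ in $x_e,\dots,x_{n+e}$, observes that $(x_0,\dots,x_{e-1})$ is the only minimal prime of $\ini(I)$, computes $\deg(R)=\deg\bigl(S_0/(w_1,\dots,w_t,u_1,\dots,u_s)\bigr)$ by counting standard monomials in $x_0,\dots,x_{e-1}$, and reads off both the inequality and the equivalence $(b)\Leftrightarrow(c)$ (equality holds iff $s=0$) in one stroke; the remaining equivalence $(a)\Leftrightarrow(c)$ is simply cited from Bermejo--Gimenez. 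You instead prove the inequality by identifying $\deg(R)$ with the generic rank $\dim_{\Frac(S)}(R\otimes_S\Frac(S))$, prove $(a)\Leftrightarrow(b)$ via freeness of $R$ over the regular ring $S$ (Auslander--Buchsbaum plus a rank count on a minimal presentation) --- which is exactly the ``well-known to experts'' route the paper mentions but deliberately avoids --- prove $(c)\Rightarrow(a)$ by the semicontinuity $\depth(S_0/\ini(I))\le\depth(S_0/I)$ under Gr\"obner degeneration, and prove $(a)\Rightarrow(c)$ by rederiving the Bermejo--Gimenez criterion from the Bayer--Stillman properties of the reverse lexicographic order ($\ini(I+(x))=\ini(I)+(x)$ and $\ini(I:x)=\ini(I):x$ for the smallest variable, iterated). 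Your route is more self-contained (it does not outsource $(a)\Leftrightarrow(c)$ to a citation) and the generic-rank argument for $\deg(R)\le\mu_S(R)$ is cleaner than the paper's analysis of minimal primes; the paper's route is shorter given the citation and, more importantly, produces the explicit description of the generators of $\ini(I)$ that the authors reuse later (e.g.\ in Step 1 of the proof of Theorem \ref{51}). The only place where you should be a little more careful is the transfer of the regular sequence in $(a)\Rightarrow(c)$: you need that $x_{n+e},x_{n+e-1},\dots,x_e$, taken from the smallest variable upward, is a regular sequence on $R$ (which holds since a system of parameters of a Cohen--Macaulay graded ring is a regular sequence in any order) and then the quoted Bayer--Stillman equivalence applied one variable at a time; as stated this is a sketch of a known theorem rather than a gap.
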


It should be remarked that the equivalence of $(a)$ and $(c)$ is due to  Bermejo-Gimenez \cite[Proposition 2.1]{BG01}. The inequality in Propoposition \ref{25} can be seen as a graded  Lech's inequality between length and degree. The equivalence between $(a)$ and $(b)$ should be well-known to experts. It could be proved by localizing things with respect to the maximal homogeneous ideal, then use \cite[Theorem 17.11]{Mat86}. Belows we present another proof using initial ideal and Bermejo-Gimenez's result.

\begin{proof}
We denote the minimal monomial generators of $\ini(I)$ by 
$$w_1, \ldots, w_t, u_1v_1, \ldots, u_sv_s,$$
where $w_1, \ldots, w_t, u_1, \ldots, u_s$ are monomials in $x_0, \ldots, x_{e-1}$ of positive degrees and $v_1, \ldots, v_s$ are monomials in $x_e, \ldots, x_{n+e}$, also of positive degrees. Following Lemma \ref{24}, $\mu_S(R)$ is the number of monomials in $x_0, \ldots, x_{e-1}$ not lying in the ideal $(w_1, \ldots, w_t)$. On the other hand, the only minimal prime ideal of $\ini(I)$ is $(x_0, \ldots, x_{e-1})$. Hence
$$\deg(R)=\deg(S_0/\ini(I))=\deg(S_0/(w_1, \ldots, w_t, u_1, \ldots, u_s)).$$
The latter number, as $S_0/(w_1, \ldots, w_t, u_1, \ldots, u_s)$ is Cohen-Macaulay, is the number of monomials in $x_0, \ldots, x_{e-1}$ not lying in $(w_1, \ldots, w_t, u_1, \ldots, u_s)$. Hence
$$\deg(R)\leq \mu_S(R),$$
and equality occurs if and only if $s=0$, or in other words, $\ini(I)$ is minimally generated by some monomials in $x_0, \ldots, x_{e-1}$. The latter holds true if and only if $R$ is Cohen-Macaulay due to $(a)\Leftrightarrow (c)$.
\end{proof}


\subsection{Syzygies and Betti table}

Let $M$ be a finitely generated graded $S_0$-module. The minimal free resolution of $M$ is
$$\cdots \rightarrow F_i=\bigoplus_j S_0(-i-j)^{\beta^{S_0}_{ij}}\rightarrow \cdots  \rightarrow F_0=\bigoplus_j {S_0}(-j)^{\beta^{S_0}_{0,j}}\rightarrow 0,$$
where $\beta^{S_0}_{ij}=\dim_k\Tor_i^{S_0}(M, k)_{i+j}$ is the $(i, j)$-th graded Betti number. The number $\beta^{S_0}_i(M)=\mathrm{rank}_{S_0}(F_i)=\sum_{j\in \bZ}\beta_{ij}^{S_0}(M)$ is called the $i$-th Betti number of $M$. The module $M$ is $d$-regular if $\beta_{ij}^{S_0}(M)=0$ for all $i\geq 0$ and all $j> d$. The Castelnuovo-Mumford regularity of $M$ is
$$\reg(M)=\min\{d: M \text{ is } d-\text{regular}\}.$$
The Castelnuovo-Mumford regularity together with the projective dimension are the height and the width of the Betti table.

We say that $R$ has a $d$-linear resolution if $\beta_{ij}^{S_0}(R)=0$ unless $i=j=0$ or $j=d-1$ and $i\geq 1$. Obviously, if $R$ has a $d$-linear resolution then $I$ is generated by a set of forms of degree $d$. 

Closely related to the regularity is the $N_{d,p}$-property defined by Eisenbud-Goto \cite{EG84} and Eisenbud-Green-Hulek-Popescu \cite{EGHP05}. We say that $M$ satisfies the $N_{d,p}$-property ($d\geq 2$) if $\beta_{i,j}^{S_0}(M)=0$ for all $i\leq p$ and $j\geq d$. In other words, $M$ satisfies the $N_{d,p}$-property if $M$ is $(d-1)$-regular up to degree $p$. Clearly $M$ satisfies the $N_{d,p}$-property for all $d>\reg(M)$ and all $p\geq 0$.

In the study of the structure of modules with $N_{d,p}$-property, a mapping cone sequence of homology groups has been used effectively. Recall the notation $S_i=k[x_i, \ldots, x_{n+e}]$, $i=1, 2, \ldots, n+e$. Through the inclusion $S_1\subset S_0$, any graded $S_0$-module $M$ is also a graded $S_1$-module. The Koszul complex $K(x_1, \ldots, x_{n+e}; M)$ fits in a short exact sequence of complexes
$$0\rightarrow K(x_1, \ldots, x_{n+e}; M) \rightarrow K(x_1, \ldots, x_{n+e}; K(x_0; M))\rightarrow K(x_1, \ldots, x_{n+e}; M[-1])[-1]\rightarrow 0,$$
where $K(x_0, M)$ is the Koszul complex of $M$ with respect to the single element $x_0$. Then there is a long exact sequence of Koszul homology
\begin{multline*}
\cdots \rightarrow H_i(x_1, \ldots, x_{n+e}; M)_{i+j}\rightarrow H_i(x_0, \ldots, x_{n+e}; M)_{i+j}\rightarrow H_{i-1}(x_1, \ldots, x_{n+e}; M)_{i+j-1}\\
\stackrel{*x_0}\longrightarrow H_{i-1}(x_1, \ldots, x_{n+e}; M)_{i+j}\rightarrow H_{i-1}(x_0, \ldots, x_{n+e}; M)_{i+j}\rightarrow \cdots
\end{multline*}

We have $H_i(x_1, \ldots, x_{n+e}; M)_{i+j}\simeq \Tor^{S_1}_i(M,k)_{i+j}$ and $H_i(x_0, \ldots, x_{n+e}; M)_{i+j}\simeq \Tor^{S_0}_i(M,k)_{i+j}$. This leads to a so-called graded mapping cone sequence obtained by Ahn-Kwak.

\begin{theorem}{\cite[Theorem 3.2]{AK11}}\label{26}
Let $M$ be a graded $S_0$-module. There is a long exact sequence
\begin{multline}\label{eq5}
\cdots \rightarrow \Tor^{S_1}_i(M,k)_{i+j}\rightarrow \Tor^{S_0}_i(M,k)_{i+j}\rightarrow \Tor^{S_1}_{i-1}(M,k)_{i+j-1}\\
\stackrel{\delta}\longrightarrow \Tor^{S_1}_{i-1}(M,k)_{i+j}\rightarrow \Tor^{S_0}_{i-1}(M,k)_{i+j}\rightarrow \cdots
\end{multline}
whose connecting homomorphisms $\delta$'s are induced from the multiplication by $x_0$.
\end{theorem}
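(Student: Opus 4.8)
The plan is to assemble the long exact sequence directly from the short exact sequence of Koszul complexes displayed above, once the homology groups appearing in it are identified with the relevant $\Tor$ modules. First I would recall the standard fact that, over the polynomial ring $S_0=k[x_0,\ldots,x_{n+e}]$, the Koszul complex $K(x_0,\ldots,x_{n+e};S_0)$ is the minimal free resolution of the residue field $k=S_0/(x_0,\ldots,x_{n+e})$; tensoring this resolution with $M$ therefore gives a graded isomorphism
$$H_i(x_0,\ldots,x_{n+e};M)\cong \Tor^{S_0}_i(M,k)$$
that respects the internal degree $i+j$. The same argument applied to $S_1=k[x_1,\ldots,x_{n+e}]$, viewing $M$ as an $S_1$-module by restriction of scalars, yields $H_i(x_1,\ldots,x_{n+e};M)\cong\Tor^{S_1}_i(M,k)$.

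Next I would explain why the middle term of the short exact sequence carries the total Koszul homology. Since the Koszul complex of a sequence is the tensor product of the one-variable Koszul complexes, we have
$$K(x_1,\ldots,x_{n+e};K(x_0;M))=K(x_1,\ldots,x_{n+e};S_0)\otimes_{S_0}K(x_0;M)=K(x_0,\ldots,x_{n+e};M),$$
so its $i$-th homology is $H_i(x_0,\ldots,x_{n+e};M)\cong\Tor^{S_0}_i(M,k)$. From this point of view the displayed short exact sequence of complexes is exactly the mapping-cone sequence $0\to C\to\operatorname{Cone}(x_0)\to C[-1]\to 0$ attached to the chain map given by multiplication by $x_0$ on $C=K(x_1,\ldots,x_{n+e};M)$, recorded together with the homological shift $[-1]$ and the internal degree shift $M[-1]$ that makes this multiplication a homogeneous map of degree zero.

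I would then pass to the long exact homology sequence of this short exact sequence of complexes, which is precisely the long exact sequence of Koszul homology written above, and substitute the three identifications. Reading off the internal grading produces exactly the indices $i+j$, $i+j$, and $i+j-1$ occurring in (\ref{eq5}): the middle group is $\Tor^{S_0}_i(M,k)_{i+j}$, while the outer groups are $\Tor^{S_1}_i(M,k)_{i+j}$ and $\Tor^{S_1}_{i-1}(M,k)_{i+j-1}$. The one point requiring genuine care is the connecting homomorphism: by the general description of the boundary map of a mapping-cone sequence, $\delta$ is induced by the chain map defining the cone, namely multiplication by $x_0$, and one must verify that it is indeed the map $\Tor^{S_1}_{i-1}(M,k)_{i+j-1}\to\Tor^{S_1}_{i-1}(M,k)_{i+j}$ induced by $\cdot\,x_0$.

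The main obstacle I expect is the simultaneous bookkeeping of the two shifts: the homological shift $[-1]$ of the mapping cone and the internal degree shift coming from the fact that multiplication by $x_0$ raises degree by one. Pinning down the grading conventions so that the connecting map lands in internal degree $i+j$ rather than $i+j-1$ and is correctly read as $\cdot\,x_0$ — together with the sign conventions in the cone differential — is where the real content lies; once these are fixed, the long exact sequence follows formally from the homology sequence of a short exact sequence of complexes.
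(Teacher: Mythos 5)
Your proposal is correct and follows essentially the same route the paper takes: it realizes the displayed short exact sequence of Koszul complexes as the mapping cone of multiplication by $x_0$ on $K(x_1,\ldots,x_{n+e};M)$, passes to the long exact homology sequence, and identifies the Koszul homology groups with $\Tor^{S_1}_\bullet(M,k)$ and $\Tor^{S_0}_\bullet(M,k)$ via the Koszul resolutions of $k$ over $S_1$ and $S_0$, with the connecting map induced by $\cdot\,x_0$. Your attention to the interplay of the homological shift $[-1]$ and the internal shift $M[-1]$ is exactly the bookkeeping that makes the graded indices $i+j$ and $i+j-1$ come out as in (\ref{eq5}).
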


The following consequence follows immediately from the exact sequence in the theorem.

\begin{corollary}\label{27}
\begin{enumerate}
\item[(a)] Keep the notations as in Theorem \ref{26}. Denote
$$\chi_m^{S_0}(M)=\sum_{j=0}^m(-1)^j\beta_{m-j, j}^{S_0}(M),$$
for $m\in \bZ$. Then we have an additive formula
$$\chi_m^{S_0}(M)=\chi_m^{S_1}(M)+\chi_{m-1}^{S_1}(M).$$
Furthermore,
$$\chi_m^{S_0}(M)=\sum_{j=0}^e\binom{e}{j}\chi_{m-j}^{S_e}(M).$$
\item[(b)] Fix two indexes $p>0$ and $d\geq 0$. If $\beta_{ij}^{S_1}(M)=0$ for all $i\geq p$ and $j=0,1, \ldots, d$, then $\beta_{i+1, j}^{S_0}(M)=0$ for all $i\geq p$ and $j=0,1, \ldots, d$;
\item[(c)] \cite[Corollary 2.2]{AK15} $\mathrm{proj.dim}_{S_0}(M)=\mathrm{proj.dim}_{S_t}(M)+t$;
\item[(d)] \cite[Proposition 2.3]{AK15} Fix an index $i$. If $\beta_{ij}^{S_0}(M)=0$ for all $j\geq d$, then $\beta_{i-1, j}^{S_1}(M)=0$ for all $j\geq d$. Consequently, if $M$ satisfies the $N_{d, p}$-property over $S_0$ then $M$ satisfies the $N_{d, p-t}$-property over $S_t$ for $t>0$. Furthermore, we have $\reg_{S_0}(M)=\reg_{S_t}(M)$.
\end{enumerate}
\end{corollary}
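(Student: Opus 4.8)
The plan is to extract everything from the long exact sequence (\ref{eq5}) of Theorem \ref{26}, viewing it as a finite exact sequence of finite-dimensional graded $k$-vector spaces; parts (c) and (d) carry citations to \cite{AK15}, so the substance lies in (a) and (b).

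For (b) I would argue by a direct squeeze. Fix $i\geq p$ and $0\leq j\leq d$, and read (\ref{eq5}) at homological index $i+1$ and total internal degree $(i+1)+j$:
\[
\Tor^{S_1}_{i+1}(M,k)_{(i+1)+j}\to \Tor^{S_0}_{i+1}(M,k)_{(i+1)+j}\to \Tor^{S_1}_{i}(M,k)_{i+j}.
\]
The left-hand term is $\beta^{S_1}_{i+1,j}(M)$ and the right-hand term is $\beta^{S_1}_{i,j}(M)$; since $i+1\geq p$, $i\geq p$ and $0\leq j\leq d$, both vanish by hypothesis, forcing the middle term $\beta^{S_0}_{i+1,j}(M)$ to vanish, which is exactly the assertion. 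The only thing to watch is the matching of internal degrees, in particular that the map out of $\Tor^{S_0}_{i+1}$ drops the internal degree by one and lands in row $j$ of the $S_1$-Betti table.

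For (a) I would compute the graded Euler characteristic of (\ref{eq5}) in a fixed internal degree $m$. Set $c_i=\dim_k\Tor^{S_0}_i(M,k)_m$, $b_i=\dim_k\Tor^{S_1}_i(M,k)_m$ and $b'_i=\dim_k\Tor^{S_1}_i(M,k)_{m-1}$. Tracing (\ref{eq5}) through the terms whose $\Tor^{S_0}$-entries sit in internal degree $m$ produces one genuine exact sequence whose terms appear, in order, as $\ldots,b_i,c_i,b'_{i-1},b_{i-1},c_{i-1},b'_{i-2},\ldots$, the connecting map $\delta$ (multiplication by $x_0$) being precisely what reroutes degree $m-1$ back to degree $m$. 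Exactness forces the alternating sum of dimensions to vanish, and after fixing signs this reads $\sum_i(-1)^i(b_i-c_i-b'_i)=0$. Since $\chi^{S_0}_m=(-1)^m\sum_i(-1)^ic_i$ and similarly for the $S_1$-quantities, this rearranges to $\chi^{S_0}_m=\chi^{S_1}_m+\chi^{S_1}_{m-1}$. I expect the sign bookkeeping to be the only delicate point: the degree shift caused by $\delta$ is exactly what yields the summand $\chi^{S_1}_{m-1}$ rather than a second copy of $\chi^{S_1}_m$.

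Finally, to reach the closed binomial formula I would iterate. The same argument applied to each one-variable extension $S_t=S_{t+1}[x_t]$ gives $\chi^{S_t}_m=\chi^{S_{t+1}}_m+\chi^{S_{t+1}}_{m-1}$, i.e. $\chi^{S_t}_\bullet=(1+\sigma)\chi^{S_{t+1}}_\bullet$ where $\sigma$ denotes the shift $m\mapsto m-1$. Composing $e$ times and expanding $(1+\sigma)^e=\sum_{j=0}^e\binom{e}{j}\sigma^j$ yields $\chi^{S_0}_m=\sum_{j=0}^e\binom{e}{j}\chi^{S_e}_{m-j}$. For (c) and (d) I would simply invoke \cite[Corollary 2.2, Proposition 2.3]{AK15}; both have the same flavour and could alternatively be deduced from (\ref{eq5}) by the squeezing of (b) together with an induction on $t$.
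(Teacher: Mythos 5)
Your proposal is correct and follows essentially the same route as the paper, which derives (a) and (b) directly from the long exact sequence of Theorem \ref{26} exactly as you do (your degree bookkeeping in (b) and the sign analysis plus the $(1+\sigma)^e$ iteration in (a) are right), and defers (c) and (d) to \cite{AK15} as the statement itself licenses. One small caveat: the paper's own proof of (d) is not a two-sided squeeze but uses that vanishing of $\Tor^{S_0}_i(M,k)_{i+j}$ makes the connecting map $\delta$ injective, giving the monotone chain $\beta^{S_1}_{i-1,d}(M)\leq\beta^{S_1}_{i-1,d+1}(M)\leq\cdots$ which must stabilize at $0$; so your parenthetical that (d) could be obtained "by the squeezing of (b)" would need that adjustment, though it is immaterial since you rest (d) on the citation.
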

\begin{proof}
(a) and (b) are induced directly from Theorem \ref{26} and (c) is a consequence of $(b)$.

In order to prove (d), we note that if $\beta_{ij}^{S_0}(M)=0$ then from the exact sequence (\ref{eq5}) in Theorem \ref{26}, $\beta_{i-1, j}^{S_1}(M)\leq \beta_{i-1, j+1}^{S_1}(M)$. If $\beta_{ij}^{S_0}(M)=0$ for all $j\geq d$ then
$$\beta_{i-1, d}^{S_1}(M)\leq \beta_{i-1, d+1}^{S_1}(M)\leq \ldots \leq \beta_{i-1, j}^{S_1}(M)\leq \ldots $$
But $\beta_{i-1, j}^{S_1}(M)=0$ for large enough $j$, thus
$$\beta_{i-1, d}^{S_1}(M)=\beta_{i-1, d+1}^{S_1}(M)=\ldots=0.$$
\end{proof}

As an application of Corollary \ref{27}, we obtain an upper bound for the reduction number which is somehow stronger than Trung's upper bound in Proposition \ref{21}.

\begin{corollary}\label{28}
Let $I\subset S_0$ be a homogeneous ideal of codimension $e$. Suppose $R=S_0/I$ satisfies the $N_{d,e}$-property. Then
$$ r(R)<d $$
where $r(R)$ is the reduction number.
\end{corollary}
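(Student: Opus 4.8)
The plan is to transfer the $N_{d,e}$-property from the ambient ring $S_0$ down to a Noether normalization and then read off the degrees of the module generators from the resulting vanishing of zeroth Betti numbers. Since the graded Betti numbers $\beta_{ij}^{S_0}(R)$ are independent of the choice of coordinates and $k$ is infinite, I would first perform a generic linear change of variables so that $S=S_e=k[x_e,\ldots,x_{n+e}]\hookrightarrow R$ becomes a Noether normalization; this does not disturb the hypothesis $N_{d,e}$, which is a statement purely about the Betti table over $S_0$.

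Next I would apply Corollary \ref{27}(d) with $t=e$. The hypothesis that $R$ satisfies $N_{d,e}$ over $S_0$ then yields that $R$ satisfies $N_{d,0}$ over $S=S_e$; explicitly, $\beta_{0,j}^{S}(R)=0$ for all $j\ge d$. The point is that the descent lowers the second index of the $N_{d,p}$-property by exactly one per step, so starting from $p=e$ over $S_0$ and passing to $S_e$ lands at $p=0$.

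Now I would interpret this vanishing. The number $\beta_{0,j}^{S}(R)=\dim_k(R\otimes_S k)_j$ counts the minimal homogeneous generators of $R$ as an $S$-module in degree $j$, so $\beta_{0,j}^{S}(R)=0$ for $j\ge d$ says precisely that every minimal generator of $R$ over $S$ has degree at most $d-1$. By Lemma \ref{22} this means $r_S(R)=\max_i\deg(b_i)\le d-1$, and since $r(R)\le r_S(R)$ for this Noether normalization, I conclude $r(R)\le d-1<d$.

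The argument is quite short, so there is no substantial obstacle; the only place requiring care is the bookkeeping in Corollary \ref{27}(d), namely confirming that the reduced property over $S$ is genuinely $N_{d,0}$ (a statement about the generators, the zeroth syzygies) rather than about higher syzygies, and that $t=e$ is permitted since $e\ge 1$ for a non-degenerate subscheme of positive codimension. Once this is verified, the interpretation of $\beta_{0,j}^{S}(R)$ via Lemma \ref{22} (equivalently Lemma \ref{24}) closes the proof immediately.
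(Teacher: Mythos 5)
Your proposal is correct and follows essentially the same route as the paper: after arranging a Noether normalization $S=S_e\hookrightarrow R$, the paper also invokes Corollary \ref{27}(d) to descend from the $N_{d,e}$-property over $S_0$ to the $N_{d,0}$-property over $S_e$, reads off $\beta_{0,j}^{S_e}(R)=0$ for $j\geq d$, and concludes $r(R)<d$ via Lemma \ref{22}. Your additional remarks on coordinate-independence of the hypothesis and on $r(R)\leq r_S(R)$ are just slightly more explicit versions of steps the paper leaves implicit.
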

\begin{proof}
It should be mentioned that $R$ always satisfies the $N_{d,e}$-property for some $d\leq \reg(R)$. So this corollary gives a stronger upper bound than Trung's.

We may assume that the natural homomorphism $S_e=k[x_e, \ldots, x_{n+e}]\rightarrow R$ is a Noether normalization of $R$. By Corollary \ref{27}(d), $R$ satisfies the $N_{d,0}$-property as an $S_e$-module. In other words, $\beta_{0,j}^{S_e}(R)=0$ for all $j\geq d$ and hence $r(R)<d$ (see Lemma \ref{22}).
\end{proof}


$\blacksquare$ Notations and Conventions \\
\medskip
Let $X\subset \bP^{n+e}$ be a non-degenerate closed subscheme of dimension $n$ with the ideal sheaf $\mathcal I_X$.
Let $I_X=\oplus_{m=0}^\infty H^0(\bP^{n+e}, \mathcal I_X(m))$ be the saturated homogeneous ideal and $R=k[x_0, \ldots, x_{n+e}]/I_X$ be the homogeneous coordinate ring.
\begin{itemize}
\item The reduction number of $X$ is the same as the reduction number of $R$, i.e., $r(X)=r(R)$.

\item The $(i,j)$-th graded Betti number of $X$ is
$$\beta_{i,j}(X):=\beta_{i,j}^{S_0}(R)=\beta^{S_0}_{i-1,j+1}(I_X)$$

\item The Castelnuovo-Mumford regularity of $X$ is $\reg(X)=\reg(I_X)=\reg(R)+1$.

\item We say that $X$ satisfies the $N_{d,p}$-property if so does $R$. In other words, $X$ satisfies the $N_{d,p}$-property if $R$ is $(d-1)$-regular up to the degree $p$.

\item A projective variety is always assumed to be an irreducible and reduced projective subscheme.
\end{itemize}

Let $X\subset\bP^{n+e}$ be a projective variety of codimension $e$. We say that $X$ has the minimal degree if $\deg(X)=e+1$. The variety $X$ is a del Pezzo variety if it is arithmetically Cohen-Macaulay with $\deg(X)=e+2$ (almost minimal degree).

In the whole paper, we only consider the degree reverse lexicographic order on the monomials. The initial ideal of a homogeneous ideal $I$ with respect to this order is denoted by $\ini(I)$.


\section{Degree upper bound in terms of reduction number and the maximal cases}

The nature of the reduction number is a bound for the complexity of an algebra or the associated scheme as we can see in Proposition \ref{21} and especially Lemma \ref{22}. The  upper bound for the degree in terms of the reduction number in Theorem \ref{31} provides more evidence for this observation. In this section we first prove this theorem and then give several consequences and applications.

\begin{proof}[\bf Proof of Theorem \ref{31}]
Let $I_X\subset S_0$ be the saturated homogeneous defining ideal of $X$ and $R=S_0/I_X$. Changing the variables if necessary, we may assume that $Q=(x_e, \ldots, x_{n+e})$ is a minimal reduction of the irrelevant ideal $R_+$ with the reduction number $r_Q(R)=r$. In particular, $S=S_e=k[x_e, \ldots, x_{n+e}]\hookrightarrow R$ is a Noether normalization of $R$. Let $\mu_S(R)=\dim_k(R/S_+R)$ be the minimal number of generators of $R$ over $S$. Then we have
\begin{equation}\label{eq6}
\deg(X)\leq \mu_S(R)\leq \binom{e+r}{r},
\end{equation}
due to Propositions \ref{23} and \ref{25}. This proves the inequality (\ref{eq3}) in Theorem \ref{31}.

Now, if $X$ is arithmetically Cohen-Macaulay with an $(r+1)$-linear resolution then Eisenbud-Goto \cite[Proposition 1.7]{EG84} showed that
$$\deg(X)= \binom{e+r}{r}.$$

Conversely, assume that $\deg(X)=\binom{e+r}{r}$. Then all inequalities in (\ref{eq6}) are actually equalities, i.e.,
$$\mu_S(R)=\deg(X)=\binom{e+r}{r}.$$
Then $R$ is Cohen-Macaulay by using Proposition \ref{25} again.

It remains to show that $X$ has an $(r+1)$-linear resolution. Since $R$ is Cohen-Macaulay, we have by Proposition \ref{21} that
$$\reg(R)=a_{n+1}(R)+n=r_S(R)=r.$$
Hence $\reg(I_X)=\reg(R)+1=r+1$. On the other hand, combining the fact $\mu_S(R)=\binom{e+r}{r}$ with Lemma \ref{24}, we can see that $R$ is minimally generated over $S$ by all the monomials $x_0^{\alpha_0}\ldots x_{e-1}^{\alpha_{e-1}}$ with $\alpha_0+\ldots+\alpha_{e-1}\leq r$. They are all standard monomials with respect to $I_X+(x_e, \ldots, x_{n+e})$ and the degree reverse lexicographic order on the monomials. Note also that, due to Bermejo-Gimenez (see Proposition \ref{25}), $\ini(I_X)$ has a set of generators consisting of monomials in $x_0, \ldots, x_{e-1}$. Therefore
$$\ini(I_X)=(x_0^{\alpha_0}\ldots x_{e-1}^{\alpha_{e-1}}: \alpha_0+\ldots+\alpha_{e-1}= r+1).$$
In particular, the minimal degree of generators of $I_X$ is $r+1$. Combining this with the fact that $\reg(I_X)\leq r+1$, we get that $I_X$ is generated by a set of forms of degree $r+1$, hence $R$ has an $(r+1)$-linear minimal free resolution.
\end{proof}

\begin{example}\label{32-1}({\bf{Zero dimensional schemes with maximal degree}})
Let $\Gamma \subset \bP^e$ be a non-degenerate finite scheme with the reduction number $r$. Since $\Gamma$ is arithmetically Cohen-Maculay, the reduction number $r$ is equal to $\reg(R_{\Gamma})$. Therefore, it is $r$-normal and $\deg(\Gamma)\le \binom{e+r}{r}$ (or using Theorem \ref{31}). In particular, $\deg(\Gamma)=\binom{e+r}{r}$ if and only if there is no hypersurface of degree $r$ containing $\Gamma $ if and only if $R_{\Gamma}$ has an $(r+1)$-linear resolution.

For example, consider two distinct sets of $6$ points in $\bP^2$. One is $\Gamma_1$ not being contained in a conic curve. In this case, $r(\Gamma_1)=2$ and for a Noether normalization $S\rightarrow R_{\Gamma_1}$ which defines the reduction number $r(\Gamma_1)$, we have
$$S(-2)^{\oplus 3}\oplus S(-1)^{\oplus 2} \oplus S\simeq R_{\Gamma_1}.$$

The other is $\Gamma_2$ being contained in a unique conic curve. In this case, $r(\Gamma_2)=3$ and for a Noether normalization $S\rightarrow R_{\Gamma_2}$ which defines the reduction number $r(\Gamma_2)$, we also have
$$S(-3)\oplus S(-2)^{\oplus 2}\oplus S(-1)^{\oplus 2} \oplus S\simeq R_{\Gamma_2}.$$
\end{example}

\begin{example}\label{35}
There are many examples of projective subschemes with maximal degree (see Corollary \ref{32}). For examples, the algebraic set defined by the ideal of maximal minors of a $1$-generic $d \times(e +d -1)$-matrix of linear forms is such a subscheme which is arithmetically Cohen-Macaulay with $d$-linear resolution.
\end{example}

On the other hand, the proof of Theorem \ref{31} and Corollary \ref{32} induce the following combinatorial identity which is presented here for later usage.

\begin{lemma}\label{33}
Let $e, m, r$ be non-negative integers such $e>0$ and $m\leq r+e$. We have
$$\sum_{j=m-r}^e(-1)^{j}\binom{e}{j}\binom{e+m-j-1}{e-1}=
\begin{cases}
(-1)^{m+r}\binom{e+r}{m}\binom{m-1}{r} &\mbox{ if } m>r,\\
0 &\mbox{ if } 0<m\leq r,
\end{cases}
$$
where, by convention, $\binom{a}{b}=0$ if $a<b$.
\end{lemma}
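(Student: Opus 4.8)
The plan is to read $\binom{e+m-j-1}{e-1}$ as the number of monomials of degree $m-j$ in $e$ variables, i.e.\ as the coefficient $[t^{m-j}](1-t)^{-e}$, while $(-1)^j\binom{e}{j}=[t^{j}](1-t)^{e}$. Thus each summand is a term in the convolution of the coefficient sequences of $(1-t)^{e}$ and $(1-t)^{-e}$, and the \emph{complete} alternating sum over all integers $j$ equals $[t^{m}]\big((1-t)^{e}(1-t)^{-e}\big)=[t^{m}]\,1=0$ for every $m>0$ (the convention $\binom{a}{b}=0$ for $a<b$ makes all out-of-range terms vanish on their own). Since the stated sum runs over $j\ge m-r$, in the case $0<m\le r$ the lower limit satisfies $m-r\le 0$, so the sum already contains every nonvanishing term and therefore equals the complete sum, namely $0$. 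This disposes of the second case; the work lies entirely in $m>r$, where the sum is a genuinely \emph{truncated} convolution to which the cancellation above no longer applies directly.

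For $m>r$ I would first substitute $d=m-j$ to rewrite
\[
\sum_{j=m-r}^{e}(-1)^j\binom{e}{j}\binom{e+m-j-1}{e-1}
=(-1)^m\sum_{d=0}^{r}(-1)^d\binom{e}{m-d}\binom{e+d-1}{e-1},
\]
the reindexing being legitimate because every term with $d<0$ (where $\binom{e+d-1}{e-1}=0$) or $d>m$ (where $\binom{e}{m-d}=0$) contributes nothing. The whole problem then reduces to establishing the closed form
\[
P(K):=\sum_{d=0}^{K}(-1)^d\binom{e}{m-d}\binom{e+d-1}{e-1}=(-1)^K\binom{e+K}{m}\binom{m-1}{K},
\]
valid for all $K\ge 0$, which I would prove by induction on $K$ and then specialize to $K=r$. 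The base case $K=0$ is immediate since both sides equal $\binom{e}{m}$. This single formula in fact recovers the first case as well, since $\binom{m-1}{r}=0$ whenever $r\ge m$.

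The inductive step is exactly the identity
\[
\binom{e+K}{m}\binom{m-1}{K}+\binom{e+K-1}{m}\binom{m-1}{K-1}=\binom{e}{m-K}\binom{e+K-1}{e-1},
\]
which I would verify purely algebraically: expand $\binom{e+K}{m}$ by Pascal's rule and regroup to obtain $\binom{e+K-1}{m}\binom{m}{K}+\binom{e+K-1}{m-1}\binom{m-1}{K}$, then apply the subset-of-subsets identity $\binom{n}{a}\binom{a}{b}=\binom{n}{b}\binom{n-b}{a-b}$ to each product with $n=e+K-1$, and collapse the outcome with one further use of Pascal's rule. Feeding this telescoping into the induction and recombining signs via $(-1)^m(-1)^r=(-1)^{m+r}$ yields $(-1)^{m+r}\binom{e+r}{m}\binom{m-1}{r}$, as claimed. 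I expect the main obstacle to be precisely guessing the correct antidifference $P(K)=(-1)^K\binom{e+K}{m}\binom{m-1}{K}$ for the truncated sum and verifying the displayed telescoping identity; once that algebraic identity is in hand, everything else is routine bookkeeping.
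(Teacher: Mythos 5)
Your proof is correct, and it takes a genuinely different route from the paper. The paper obtains the identity as a by-product of homological algebra: it takes the monomial ideal $I=(x_0,\dots,x_{e-1})^{r+1}$, for which the Betti numbers of $R=S_0/I$ are known explicitly both over the Noether normalization $S$ (where $R$ is free with $\binom{e+j-1}{j}$ generators in degree $j\le r$) and over $S_0$ (by Eisenbud--Goto, since $R$ is Cohen--Macaulay with an $(r+1)$-linear resolution), and then equates the two via the additive formula $\chi_m^{S_0}(R)=\sum_j\binom{e}{j}\chi_{m-j}^{S_e}(R)$ from Corollary \ref{27}(a). Your argument is instead purely combinatorial: the generating-function observation that the untruncated convolution is $[t^m]\bigl((1-t)^e(1-t)^{-e}\bigr)=0$ disposes of the case $0<m\le r$, and for $m>r$ you reindex and prove the closed form $P(K)=(-1)^K\binom{e+K}{m}\binom{m-1}{K}$ by induction on the truncation point, with the inductive step reduced to a Pascal/subset-of-subsets manipulation that checks out (and that I verified reduces the left side to $\binom{e+K-1}{K}\binom{e}{m-K}=\binom{e}{m-K}\binom{e+K-1}{e-1}$ as you claim). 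What your approach buys is self-containedness and a slightly stronger statement --- the antidifference $P(K)$ is valid for every $K$, not just $K=r$, and it uniformly subsumes both cases of the lemma since $\binom{m-1}{r}=0$ for $m\le r$; what the paper's approach buys is brevity given machinery already in place, and a conceptual explanation of where the identity comes from, namely as the Euler-characteristic comparison that the lemma is then used for in reverse in the proofs of Theorems \ref{45} and \ref{54}. The only point to watch is that both your complete-sum argument and your reindexing silently use $\binom{e}{j}=0$ for $j<0$, which is the standard extension but not literally the convention stated in the lemma; it affects nothing, since those terms never arise in the paper's applications.
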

\begin{proof}
Let $I\subset S_0=k[x_0, \ldots, x_{n+e}]$ be a monomial ideal generated by all monomials  in $x_0, \ldots, x_{e-1}$ of degree $r+1$. Then $R=S_0/I$ is a Cohen-Macaulay $k$-algebra with an $(r+1)$-linear minimal free resolution and $\deg(R)=\binom{e+r}r$ (see Corollary \ref{32}). Moreover, $(x_e, \ldots, x_{n+e})$ is a minimal reduction of $R_+$ with the reduction number $r$. In particular, $S=S_e=k[x_e, \ldots, x_{n+e}]\subset R$ is a Noether normalization of $R$.

As an $S$-module, the Betti numbers of $R$ are
$$\beta_{ij}^S(R)=\begin{cases}
\binom{e+j-1}{j} &\mbox{ if } i=0, 0\leq j\leq r,\\
0 &\mbox{ otherwise}.
\end{cases}$$

On the other hand, the Betti numbers of the $S_0$-module $R$ are
$$\beta_{ij}^{S_0}(R)=\begin{cases}
1&\mbox{ if } i=j=0,\\
\binom{e+r}{i+r}\binom{i+r-1}{r} &\mbox{ if } 0\leq i\leq e, j=r,\\
0 &\mbox{ otherwise}.
\end{cases}$$
due to \cite[Proposition 1.7]{EG84}.

Following Corollary \ref{27}, the Betti numbers of $R$ over $S$ and over $S_0$ are related by the equality
$$\chi_m^{S_0}(R)=\sum_{j=0}^e\binom{e}{j}\chi_{m-j}^{S}(R),$$
where
$$\chi_m^{S_i}(R)=\sum_{j=0}^m(-1)^j\beta_{m-j, j}^{S_i}(R),$$
for $m\in \bZ$. From the Betti table of $R$ as an $S$-module above, we have
$$\chi_{m-j}^{S}(R)=(-1)^{m-j}\beta_{0,m-j}^S(R)=
\begin{cases}
(-1)^{m-j}\binom{e+m-j-1}{m-j}&\mbox{ if } j\geq m-r,\\
0&\mbox{ if } j<m-r.
\end{cases}
$$
Therefore, if $0<m\leq r$ then
$$\sum_{j=m-r}^e(-1)^j\binom{e}{j}\binom{e+m-j-1}{e-1}=0,$$
and if $r< m\leq r+e$,
$$\sum_{j=m-r}^e(-1)^j\binom{e}{j}\binom{e+m-j-1}{e-1}=(-1)^{m+r}\binom{e+r}{m}\binom{m-1}{r}.$$
\end{proof}

In general, the reduction number is smaller than the regularity index $d$ in the $N_{d,e}$-property. The following example shows that the reduction number is $1$ and the regularity index can be arbitrarily large.

\begin{example}(due to B. Ulrich)\label{34}
Let $S_0=k[x_0, x_1, x_2, x_3]$. Consider the projective non-reduced line defined by the ideal $I=((x_0, x_1)^2, x_0x_2^t+x_1x_3^t)$ for some $t\geq 2$. Let $R=S_0/I$. Then $(x_2, x_3)$ is a minimal reduction of $R_+$ and $S=k[x_2, x_3]\hookrightarrow R$ is a Noether normalization. Since $R=S+Sx_0+Sx_1$, the reduction number $r(R)=1$.

On the other hand, the Betti table of $R$ as an $S_0$-module is given by

\newpage
\begin{figure}[!htb]
\begin{tabular}{>{\centering}m{1.5cm}|>{\centering}m{1cm} >{\centering}m{1cm}  >{\centering}m{1cm} c}
	&	0&		1&	2&	$3$\\
\hline
0	&	1&		--&		--&		--\\
1	&	--&	 	$3$&		$2$&		--\\
2	&	--&	 	--&		--&		--\\
$\vdots$	&	$\vdots$&	 $\vdots$&	$\vdots$&		$\vdots$\\
$t-1$	&	--&	 	--&		--&		--\\
$t$	&	--&	 	$1$&	$2$&	$1$\\
\end{tabular}
\newline

where the symbol $-$ means that the corresponding Betti number is zero.
\end{figure}

\noindent So $\reg(R)=t$, $\mathrm{proj.dim}_{S_0}(R)=3$, $\depth(R)=1$. The Hilbert polynomial of $R$ is $P_R(n)=2n+t-1$ and thus $\deg(R)=2$.

\end{example}

In the second part of this section, we will apply Theorem \ref{31} and its consequences to study projective varieties with small reduction number.
Recall that a projective variety $X\subset\bP^{n+e}$ of codimension $e$ has a minimal degree if $\deg(X)=e+1$.

\begin{corollary}\label{36}
Let $X\subset \bP^{n+e}$ be a non-degenerate projective variety of dimension $n$ and codimension $e$. The following statements are equivalent:
\begin{enumerate}
\item[(a)] $X$ has a minimal degree;
\item[(b)] The reduction number $r(X)=1$;
\item[(c)] The Castelnuovo-Mumford regularity $\reg(X)=2$.
\end{enumerate}
\end{corollary}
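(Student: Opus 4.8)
The plan is to prove the equivalence by establishing a cycle of implications, leveraging the degree bound from Theorem \ref{31} and Trung's comparison from Proposition \ref{21}. The minimal possible reduction number is $r=1$ since $X$ is non-degenerate (so $X$ is not a linear subspace and $R_+$ cannot be generated by a regular sequence of linear forms in a way giving $r=0$), and plugging $r=1$ into the degree bound $\deg(X)\le\binom{e+r}{r}$ yields exactly $\deg(X)\le e+1$. Since every non-degenerate variety of codimension $e$ satisfies $\deg(X)\ge e+1$ (the classical lower bound), the maximal case of Theorem \ref{31} with $r=1$ forces $\deg(X)=e+1$. This makes $(b)\Rightarrow(a)$ essentially immediate from the characterization already proven.

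First I would handle $(b)\Rightarrow(c)$. If $r(X)=1$, then by Theorem \ref{31} the maximal-degree case gives that $X$ is arithmetically Cohen-Macaulay with a $2$-linear resolution, whence $\reg(R)=1$ and $\reg(X)=\reg(R)+1=2$. Conversely, for $(c)\Rightarrow(b)$, if $\reg(X)=2$ then $\reg(R)=1$, and combined with the universal inequality $1\le r(X)\le\reg(R)=1$ from (\ref{eq1}) we get $r(X)=1$ directly. So the equivalence $(b)\Leftrightarrow(c)$ is quick once we invoke Theorem \ref{31} in one direction and (\ref{eq1}) in the other.

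The genuinely substantive direction is $(a)\Rightarrow(b)$, where I assume only $\deg(X)=e+1$ and must recover $r(X)=1$. Here I would argue by the contrapositive combined with the degree bound: if $r(X)\ge 2$ were compatible with $\deg(X)=e+1$, I need to rule this out or directly show $r=1$. The cleanest route is to use the classical fact that a non-degenerate variety of minimal degree is arithmetically Cohen-Macaulay (a del Pezzo--Bertini type classification, or more elementarily, minimal-degree varieties are known to be ACM with $2$-linear resolution). Granting arithmetic Cohen-Macaulayness, Proposition \ref{21} gives $r(X)=\reg(R)$, and a minimal-degree ACM variety has a linear (hence $2$-linear) resolution with $\reg(R)=1$, forcing $r(X)=1$. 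Alternatively, one can invoke Theorem \ref{31} in the reverse form: since $\deg(X)=e+1=\binom{e+1}{1}=\binom{e+r}{r}$ holds with $r=1$, and the reduction number satisfies $r(X)\ge 1$, the degree bound $\deg(X)\le\binom{e+r(X)}{r(X)}$ together with the strict monotonicity of $r\mapsto\binom{e+r}{r}$ in $r$ shows that any $r(X)\ge 2$ would only relax the bound, so I must instead use the equality characterization: $\deg(X)=\binom{e+r(X)}{r(X)}$ can fail, and the subtlety is that minimal degree need not a priori force maximal-degree status for the \emph{actual} reduction number.

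The main obstacle, then, is precisely $(a)\Rightarrow(b)$: the degree bound is only an upper bound, so $\deg(X)=e+1$ does not formally prevent a larger reduction number with a far-from-maximal degree. I expect to resolve this by appealing to the well-known structure theory of minimal-degree varieties (they are rational normal scrolls, Veronese surfaces, quadric hypersurfaces, or cones thereof, all of which are arithmetically Cohen-Macaulay with $\reg(R)=1$), and then using Proposition \ref{21} to identify $r(X)=\reg(R)=1$. If one wishes to avoid citing the full classification, the fallback is to prove that a non-degenerate variety with $\deg(X)=e+1$ is $2$-regular directly via the Castelnuovo-type bound $\reg(X)\le\deg(X)-\mathrm{codim}(X)+1=2$, from which $(c)$ and hence $(b)$ follow. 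I would present this regularity estimate as the key input, since it sidesteps the classification and funnels everything back through the already-established $(c)\Leftrightarrow(b)$.
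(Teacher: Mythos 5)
Your main line of argument is correct and is essentially the paper's: the cycle runs $(b)\Rightarrow(a)$ by combining the upper bound $\deg(X)\le\binom{e+1}{1}=e+1$ (Proposition \ref{23}/Theorem \ref{31}) with the classical lower bound $\deg(X)\ge e+1$ for non-degenerate varieties; the direction out of $(a)$ rests on the classical fact that a variety of minimal degree is arithmetically Cohen--Macaulay with a $2$-linear resolution (the paper simply cites \cite{EG84} here to get $(a)\Rightarrow(c)$); and $(c)\Rightarrow(b)$ follows from $1\le r(X)\le\reg(R)=1$ via Proposition \ref{21}. Two remarks. First, your separate proof of $(b)\Rightarrow(c)$ is redundant once the cycle $(b)\Rightarrow(a)\Rightarrow(c)\Rightarrow(b)$ is closed, though routing it through the equality case of Theorem \ref{31} is a clean internal alternative to citing \cite{EG84} for that particular implication. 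Second, and more importantly, your proposed ``fallback'' for $(a)\Rightarrow(c)$ --- invoking a general Castelnuovo-type bound $\reg(X)\le\deg(X)-\mathrm{codim}(X)+1$ --- is not available: that inequality is the Eisenbud--Goto regularity conjecture, which is a theorem only in special cases (curves, smooth surfaces) and is false for arbitrary projective varieties by the McCullough--Peeva counterexamples. The instance you need, namely that minimal degree forces $2$-regularity, is precisely the theorem of \cite{EG84} that the paper cites, so the fallback does not sidestep the classification-type input; it silently assumes it. Stick with the first route (minimal degree $\Rightarrow$ ACM with $2$-linear resolution $\Rightarrow$ $\reg(R)=1$, then Proposition \ref{21}), which is exactly what the paper does.
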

\begin{proof}
Let $X$ be a variety with reduction number $r(X)=1$. The degree of $X$ is bounded above by $e+1$ due to Proposition \ref{23}. Hence $\deg(X)=e+1$ and $X$ has a minimal degree. Furthermore, $\reg(X)=2$ by \cite{EG84}. This shows $(b)\Rightarrow (a)$ and $(a)\Rightarrow (c)$.

On the other hand, if $\reg(X)=2$ then $\reg(R)=1$, where $R$ is the homogeneous coordinate ring of $X$. So the implication $(c)\Rightarrow (b)$ follows immediately from Proposition \ref{21}.
\end{proof}

The case of projective closed subschemes of reduction number $2$ is much more complicated. The del Pezzo varieties are interesting examples of such subschemes. Recall that a projective variety $X\subset\bP^{n+e}$ is a del Pezzo variety if it is arithmetically Cohen-Macaulay with $\deg(X)=\mathrm{codim}(X)+2$ (i.e. almost minimal degree).

\begin{example}\label{37}
Let $X\subset \bP^{n+e}$ be a del Pezzo variety of dimension $n$. The Betti table of the homogeneous coordinate ring of $X$ is

\begin{figure}[!htb]
\begin{tabular}{>{\centering}m{2cm}|>{\centering}m{1cm} >{\centering}m{1cm} >{\centering}m{1cm} >{\centering}m{1cm} >{\centering}m{1cm} c}
			&	0&		1&	2&	$\cdots$&		$e-1$&	$e$\\
			\hline
		0	&	1&		--&	--&	$\cdots$&	--&			--\\
		1	&	--&	$\beta_{11}$&	$\beta_{21}$&$\cdots$&	$\beta_{e-1,1}$ &--\\
		2	&	--&	--&	--&$\cdots$&	-- &$\beta_{e,2}=1$\\
\end{tabular}
\end{figure}

\noindent where $\beta_{i1}=i\binom{e+1}{i+1}-\binom{e}{i-1}$ (see \cite{Hoa93} and also the next proposition).

So $X$ satisfies the $N_{3,e}$-property but not the $N_{2, e}$-property. The reduction number of $X$ is at most $2$ by Corollary \ref{28}. On the other hand, as a del Pezzo variety does not have a minimal degree, its reduction number is at least $2$ by Corollary \ref{36}. This concludes that $r(X)=2$.
\end{example}

The next proposition provides information on Betti table of any arithmetically Cohen-Macaulay subscheme of reduction number $2$.

\begin{proposition}\label{38}
Let $X\subset \bP^{n+e}$ be a non-degenerate closed subscheme of dimension $n$. Assume that $X$ is arithmetically Cohen-Macaulay with reduction number $r(X)=2$. Then the Betti table of the homogeneous coordinate ring of $X$ (over $S_0$) is

\begin{figure}[!htb]
\begin{tabular}{>{\centering}m{2cm}|>{\centering}m{1cm} >{\centering}m{1cm} >{\centering}m{1cm} >{\centering}m{1cm} >{\centering}m{1cm} c}
			&	0&		1&	2&	$\cdots$&		$e-1$&	$e$\\
			\hline
		0	&	1&		--&	--&	$\cdots$&	--&			--\\
		1	&	--&	$\beta_{11}$&	$\beta_{21}$&$\cdots$&	$\beta_{e-1,1}$ &$\beta_{e,1}$\\
		2	&	--&	$\beta_{12}$&	$\beta_{22}$&$\cdots$&	$\beta_{e-1,2}$ &$\beta_{e,2}$\\
\end{tabular}
\end{figure}

\noindent where $$\beta_{i,2}=\beta_{i+1,1}+\binom{e}{i}\deg(X)-(i+1)\binom{e+2}{i+2}.$$
In particular,
$$\deg(X)=e+1+\beta_{e,2}.$$
\end{proposition}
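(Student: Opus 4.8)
The plan is to compute the Euler characteristics $\chi_m^{S_0}(R)$ directly from the two-row shape of the Betti table forced by the hypotheses, and then invoke the additive comparison with the $S$-module Betti numbers from Corollary \ref{27}(a). First I would fix a Noether normalization $S=S_e=k[x_e,\dots,x_{n+e}]\hookrightarrow R$ realizing the reduction number $r(X)=2$. Since $X$ is arithmetically Cohen-Macaulay, $R$ is a free $S$-module (by Corollary \ref{27} and the discussion around it), and by Lemma \ref{22} its generators sit in degrees $0,1,2$. Thus the $S$-resolution of $R$ is concentrated in homological degree $0$, and the graded Betti numbers $\beta_{0,j}^S(R)$ for $j=0,1,2$ are simply $1$, $\beta_{0,1}^S(R)$, and $\beta_{0,2}^S(R)$, with $\beta_{0,0}^S(R)=1$ and $\mu_S(R)=\deg(X)$.

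Next I would pin down $\beta_{0,1}^S(R)$ and $\beta_{0,2}^S(R)$ individually. The degree-$1$ generators count the standard monomials in $x_0,\dots,x_{e-1}$ of degree $1$ not killed in $R$; since $X$ is non-degenerate, all $e$ linear forms $x_0,\dots,x_{e-1}$ survive, so $\beta_{0,1}^S(R)=e$. The total count $1+e+\beta_{0,2}^S(R)=\mu_S(R)=\deg(X)$ then forces $\beta_{0,2}^S(R)=\deg(X)-e-1$. This already explains the formula $\deg(X)=e+1+\beta_{e,2}$: the top corner $\beta_{e,2}^{S_0}(R)$ of the $S_0$-table should equal $\beta_{0,2}^S(R)$, which one reads off from the mapping-cone sequence (Theorem \ref{26}) at the top of the resolution, where the connecting maps vanish for degree reasons.

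The heart of the argument is the additive formula from Corollary \ref{27}(a),
\[
\chi_m^{S_0}(R)=\sum_{j=0}^e\binom{e}{j}\chi_{m-j}^{S}(R),
\]
where, because the $S$-resolution lives only in homological degree $0$, $\chi_{\ell}^{S}(R)=(-1)^{\ell}\beta_{0,\ell}^{S}(R)$ takes the values $1,-e,\deg(X)-e-1$ for $\ell=0,1,2$ and $0$ otherwise. On the $S_0$ side, the two-row shape (rows $j=1$ and $j=2$ only, plus the unit in the corner) gives $\chi_m^{S_0}(R)=(-1)^{m-1}\beta_{m-1,1}^{S_0}(R)+(-1)^{m-2}\beta_{m-2,2}^{S_0}(R)$ for $m\ge 2$, since in each antidiagonal $m=i+j$ at most the two entries $(m-1,1)$ and $(m-2,2)$ are nonzero. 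Writing $m=i+2$ and equating the two expressions yields a linear relation expressing $\beta_{i,2}$ in terms of $\beta_{i+1,1}$ together with the explicit binomial sum $\sum_{j}\binom{e}{j}\chi_{i+2-j}^S(R)$; substituting the three known values of $\chi^S$ and simplifying the resulting binomial coefficients should produce exactly
\[
\beta_{i,2}=\beta_{i+1,1}+\binom{e}{i}\deg(X)-(i+1)\binom{e+2}{i+2}.
\]

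I expect the main obstacle to be the purely arithmetic simplification of the binomial sum: after substituting $\chi_{i}^S=1$, $\chi_{i+1}^S=-e\cdot(\dots)$, and so on, one must collapse $\binom{e}{i}\cdot 1+\binom{e}{i+1}\cdot(-e)+\binom{e}{i+2}\cdot(\deg(X)-e-1)$-type terms into the stated closed form, separating the part proportional to $\deg(X)$ from the constant part $(i+1)\binom{e+2}{i+2}$. This is a Vandermonde/Pascal manipulation closely analogous to Lemma \ref{33}, and indeed I would look to reuse that identity (with $r=2$) to handle the terms not involving $\deg(X)$, which should account for the $-(i+1)\binom{e+2}{i+2}$ contribution. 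The final displayed identity $\deg(X)=e+1+\beta_{e,2}$ follows by specializing to $i=e$, where $\beta_{e+1,1}=0$ and $\binom{e}{e}\deg(X)=\deg(X)$ while $(e+1)\binom{e+2}{e+2}=e+1$.
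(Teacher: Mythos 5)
Your proposal follows essentially the same route as the paper: fix the Noether normalization, use Cohen--Macaulayness to see that $R$ is $S$-free with graded pieces $1$, $e$, $\deg(X)-e-1$ in degrees $0,1,2$, note that regularity $2$ forces the two-row shape of the $S_0$-table, and then equate $\chi_m^{S_0}(R)=\beta_{m-2,2}-\beta_{m-1,1}$ with $\sum_j\binom{e}{j}\chi_{m-j}^{S}(R)$ via Corollary \ref{27}(a) and simplify the binomials. The only cosmetic difference is that you obtain $\beta_{0,1}^S(R)=e$ by counting surviving linear monomials while the paper reads it off from Corollary \ref{27}(d); the computation is otherwise identical and correct.
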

\begin{proof}
Let $R$ be the homogeneous coordinate ring of $X$. By changing the variables, we might assume that $S=k[x_e, \ldots, x_{n+e}]$ is a Noether normalization of $R$ and reduction number $r_S(R)=r(R)=2$.

By the assumption, the Castelnuovo-Mumford regularity of $R$ is $2$ and almost all Betti numbers of $R$ over $S_0$ vanish except $\beta_{00}^{S_0}(R)=1$ and $\beta_{ij}^{S_0}(R)$ for $i=1, 2, \ldots, e$ and $j=1, 2$. Corollary \ref{27}(d) then implies that $R$ as an $S$-module has the Betti numbers $\beta_{ij}^{S}(R)=0$ except $\beta_{00}^S(R)=1$, $\beta_{01}^S(R)=e$ and
$$\beta_{02}^S(R)=\deg(X)-\beta_{00}^S(R)-\beta_{01}^S(R)=\deg(X)-e-1.$$

Now using the notations and results in Corollary \ref{27}(a), we have
$$\chi_m^{S_0}(R)=\sum_{j=0}^m(-1)^j\beta_{m-j, j}^{S_0}(R)=\beta_{m-2, 2}^{S_0}(R)-\beta_{m-1,1}^{S_0}(R),$$
$$\chi_{m}^S(R)=\sum_{j=0}^{m}(-1)^j\beta_{m-j, j}^S(R)=(-1)^m\beta_{0,m}(R),$$
and
$$\beta_{i, 2}^{S_0}(R)-\beta_{i+1,1}^{S_0}(R)=\binom{e}{i+2}-\binom{e}{i+1}\beta_{01}^S(R)+\binom{e}{i}\beta_{02}^{S}(R).$$
Let $\beta_{ij}:=\beta_{ij}^{S_0}(R)$, then we obtain
\[\begin{aligned}
\beta_{i2}&=\beta_{i+1,1}+\binom{e}{i+2}-\binom{e}{i+1}e+\binom{e}{i}(\deg(X)-e-1)\\
&=\beta_{i+1,1}+\binom{e}{i}\deg(X)-(i+1)\binom{e+2}{i+2}.
\end{aligned}\]
\end{proof}


\section{The arithmetically Cohen-Macaulay subschemes with almost maximal degree}

Let $X\subset \bP^{n+e}$ be a non-degenerate closed subscheme of dimension $n$ and the reduction number $r$. The degree of $X$ is always bounded above by $\binom{e+r}{r}$ and we have seen in Theorem \ref{31} a characterization for those subschemes whose degree attains the maximal value. In the next two sections we will investigate the almost maximal cases, namely, when
$$\deg(X)= \binom{e+r}{r}-1.$$

We keep the notations $S_i, I_X, R=S_0/I_X$ as in the previous section. Changing the variables if necessary, we assume that $J=(x_e, \ldots, x_{n+e})$ is a minimal reduction of $(R)_+$ with reduction number $r_J((R)_+)=r(R)=r$. Then $S=S_e=k[x_e, \ldots, x_{n+e}]$ is a Noether normalization of $R$. By Propositions \ref{23} and \ref{25}, there are bounds for the minimal number of generators of $R$ as an $S$-module
\begin{equation}\label{eq41}
\deg(X)\leq \mu_S(R)\leq \binom{e+r}{r}.
\end{equation}
If $\deg(X)=\binom{e+r}{r}-1$ then there are only two possibilities for $\mu_S(R)$, namely,
\begin{enumerate}
\item[1)] $\mu_S(R)= \binom{e+r}{r}-1$: the arithmetically Cohen-Macaulay case (see Proposition \ref{25});
\item[2)] $\mu_S(R)= \binom{e+r}{r}$: the non-arithmetically Cohen-Macaulay case.
\end{enumerate}

This section is devoted to study the arithmetically Cohen-Macaulay case with a proof of Theorem \ref{45}, while the latter will be considered in the next section.

Arithmetically Cohen-Macaulay projective subschemes of almost maximal degree are characterized in the following theorem.

\begin{theorem}\label{41}
Let $X\subset \bP^{n+e}$ be a non-degenerate closed subscheme of dimension $n$ and reduction number $r$. Let $I_X\subset S_0$ be the defining ideal of $X$ and $R=S_0/I_X$. Assume that the natural homomorphism $S=S_e=k[x_0, \ldots, x_{e-1}]\rightarrow R$ is a Noether normalization with the reduction number $r=r_S(R)$. The following statements are equivalent:
\begin{enumerate}
\item[(a)] $\deg(X)=\mu_S(R)= \binom{e+r}{r}-1$;
\item[(b)] $X$ is arithmetically Cohen-Macaulay, $\dim_k(I_X)_r=1$ and the truncated ideal $(I_X)_{\ge r+1}$ has a linear minimal resolution;
\item[(c)] The initial ideal  $\ini(I_X)$ with respect to the degree reverse lexicographic order is generated by a set of monomials in $x_0, x_1, \ldots, x_{e-1}$ consisting of all monomials of degree $r+1$ and a monomial of degree $r$.
\end{enumerate}
\end{theorem}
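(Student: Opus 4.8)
The plan is to route everything through Proposition~\ref{25} (the Bermejo-Gimenez criterion), which packages three equivalent faces of the Cohen--Macaulay property: $R$ is Cohen--Macaulay, $\deg(X)=\mu_S(R)$, and $\ini(I_X)$ is generated by monomials in $x_0,\dots,x_{e-1}$ alone. Since condition (c) is the most concrete, I would establish the two equivalences (a)$\Leftrightarrow$(c) and (b)$\Leftrightarrow$(c), translating each of (a) and (b) into a count of standard monomials via Lemma~\ref{24}. Throughout I use the standing hypothesis $r_S(R)=r$, which by Lemma~\ref{22} says that every standard monomial in $x_0,\dots,x_{e-1}$ has degree at most $r$, that some standard monomial of degree exactly $r$ exists, and hence that every monomial in $x_0,\dots,x_{e-1}$ of degree $r+1$ already lies in $\ini(I_X)$.

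For (a)$\Leftrightarrow$(c) I would argue by counting. If (a) holds, then $\deg(X)=\mu_S(R)$ forces $R$ Cohen--Macaulay and $\ini(I_X)$ generated by monomials in $x_0,\dots,x_{e-1}$ by Proposition~\ref{25}. By Lemma~\ref{24}, $\mu_S(R)$ is the number of monomials in $x_0,\dots,x_{e-1}$ of degree $\le r$ not lying in $\ini(I_X)$; as there are $\binom{e+r}{r}$ such monomials in total, the equality $\mu_S(R)=\binom{e+r}{r}-1$ says that exactly one monomial of degree $\le r$ belongs to $\ini(I_X)$. That monomial cannot have degree $d<r$, for then its first-variable multiples would again have degree $\le r$ and lie in $\ini(I_X)$, producing a second such monomial; hence it has degree exactly $r$. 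Combined with the fact that all degree-$(r+1)$ monomials in $x_0,\dots,x_{e-1}$ already lie in $\ini(I_X)$ and that higher-degree monomials are their multiples, this yields exactly the generating set in (c). The converse (c)$\Rightarrow$(a) reverses the step: (c) exhibits $\ini(I_X)$ as generated by monomials in $x_0,\dots,x_{e-1}$, so $R$ is Cohen--Macaulay and $\deg(X)=\mu_S(R)$, and the same count returns $\binom{e+r}{r}-1$.

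For (b)$\Rightarrow$(c), Cohen--Macaulayness again puts $\ini(I_X)$ in the variables $x_0,\dots,x_{e-1}$. Because the Hilbert function is preserved under passage to the initial ideal, $\dim_k(I_X)_r=1$ gives exactly one monomial of degree $r$ in $\ini(I_X)$, and the divisibility argument above (applied in degree $r$) rules out any monomial of degree $<r$, since such a monomial would contribute more than one monomial of degree $r$ to $\ini(I_X)$. Filling in all degree-$(r+1)$ monomials from the reduction-number hypothesis produces (c). Note that the linear-resolution clause of (b) is not needed for this direction; it re-emerges as a consequence in the reverse implication.

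The substantive direction is (c)$\Rightarrow$(b). Cohen--Macaulayness and $\dim_k(I_X)_r=\dim_k\ini(I_X)_r=1$ are immediate. The remaining claim, that the truncation $(I_X)_{\ge r+1}$ has a linear minimal resolution, I would obtain as follows. First, initial ideals commute with truncation, so $\ini\bigl((I_X)_{\ge r+1}\bigr)=\bigl(\ini(I_X)\bigr)_{\ge r+1}$, and from (c) this equals the monomial ideal $(x_0,\dots,x_{e-1})^{r+1}+m\,(x_e,\dots,x_{n+e})$, generated purely in degree $r+1$, where $m$ is the degree-$r$ generator. Second, since $\beta_{ij}^{S_0}\bigl((I_X)_{\ge r+1}\bigr)\le \beta_{ij}^{S_0}\bigl(\ini((I_X)_{\ge r+1})\bigr)$ for all $i,j$ (the Cancellation Principle, Theorem~\ref{26} and \cite{Gre98}), it suffices to show that this monomial ideal has a linear resolution: the Betti numbers of $(I_X)_{\ge r+1}$ off the strand $j=r$ are then squeezed to zero. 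The main obstacle is therefore purely combinatorial, namely computing the graded Betti numbers of $(x_0,\dots,x_{e-1})^{r+1}+m\,(x_e,\dots,x_{n+e})$ and checking that they all sit on the single linear strand. I expect to handle this by a mapping-cone or Eliahou--Kervaire style analysis, exploiting that the first summand is the linearly resolved power $(x_0,\dots,x_{e-1})^{r+1}$ and that the extra generators $m\,x_e,\dots,m\,x_{n+e}$ arise by multiplying one monomial by the remaining variables. Once linearity of the monomial ideal is in hand, the Betti-number comparison closes the argument and completes the cycle.
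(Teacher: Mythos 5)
Your treatment of (a)$\Leftrightarrow$(c) and of (b)$\Rightarrow$(c) is correct and rests on the same backbone as the paper's proof: Proposition~\ref{25} (Bermejo--Gimenez) to force $\ini(I_X)$ into the variables $x_0,\dots,x_{e-1}$, Lemmas~\ref{22} and~\ref{24} to identify $\mu_S(R)$ with a count of standard monomials of degree at most $r$, and the divisibility observation to pin the single non-standard monomial of low degree to degree exactly $r$. Organizing the cycle around (c) rather than (b) is a harmless repackaging; the paper instead proves (a)$\Leftrightarrow$(b) and (b)$\Leftrightarrow$(c), using Theorem~\ref{31} to exclude $\mu_S(R)=\binom{e+r}{r}$ in the direction (b)$\Rightarrow$(a), which your divisibility count replaces.

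The gap is in (c)$\Rightarrow$(b), precisely at the linear-resolution clause, which you reduce to showing that the monomial ideal $(x_0,\dots,x_{e-1})^{r+1}+m\,(x_e,\dots,x_{n+e})$ has an $(r+1)$-linear resolution and then explicitly defer (``I expect to handle this by a mapping-cone or Eliahou--Kervaire style analysis''). That computation is in fact true and fillable --- $K=(x_0,\dots,x_{e-1})^{r+1}$ and $L=m\,(x_e,\dots,x_{n+e})$ both have linear resolutions, $K\cap L=m\,(x_0,\dots,x_{e-1})(x_e,\dots,x_{n+e})$ is generated in degree $r+2$ with a linear resolution, and the mapping-cone bound $\beta_{ij}(K+L)\le\beta_{ij}(K)+\beta_{ij}(L)+\beta_{i-1,j+1}(K\cap L)$ keeps everything on the strand $j=r+1$ (compare Lemma~\ref{53} and \cite{FHT09}) --- but as written the key direction is not proved. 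More importantly, you have missed the one-line argument the paper uses: once $R$ is Cohen--Macaulay, Proposition~\ref{21} gives $\reg(R)=r_S(R)=r$, hence $\reg(I_X)=r+1$, and a truncation $(I_X)_{\ge r+1}$ of an ideal that is $(r+1)$-regular is generated in degree $r+1$ and $(r+1)$-regular, i.e.\ has a linear minimal resolution. No Betti numbers of any initial ideal need to be computed, and the identity $\ini\bigl((I_X)_{\ge r+1}\bigr)=\bigl(\ini(I_X)\bigr)_{\ge r+1}$ (true for degree-compatible orders, but a claim you would still have to justify) becomes unnecessary.
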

\begin{proof}
\noindent $(a) \Rightarrow (b)$: We have $\deg(X)=\mu_S(R)$ and hence $R$ is Cohen-Macaulay by Proposition \ref{25}. Then the Castelnuovo-Mumford regularity of $R$ is the same as the reduction number $r$ (see Proposition \ref{21}). This shows that $(I_X)_{>r}$ has a linear resolution.

Again, we fix the degree reverse lexicographic order on the monomials of $S_0$. There are totally $\mu_S(R)=\binom{e+r}{r}-1$ standard monomials with respect to $\ini(I_X)+(x_e, \ldots, x_{n+e})$ and they all have degrees from $0$ to $r$. This shows that there is a monomial $m$ in $x_0, \ldots, x_{e-1}$ with $\deg(m)\leq r$ which are not contained in the ideal $\ini(I_X)$. Then clearly $\deg(m)=r$ and $(I_X)_r=kg$ for some polynomial $g\in I(X)$ with $\ini(g)=m$.
\medskip

\noindent $(b)\Rightarrow (a)$: Since $X$ is arithmetically Cohen-Macaulay, we have by Propositions \ref{23} and \ref{25},
$$\deg(X)=\mu_S(R)\leq \binom{e+r}{r}.$$
The fact $\dim_k(I_X)_r=1$ implies that the $k$-vector space of degree $r$ component of the initial ideal of $I_X$ has dimension one too. Hence $\mu_S(R)\geq \binom{e+r}{r}-1$ due to Lemma \ref{24}. If $\deg(X)=\mu_S(R)=\binom{e+r}{r}$ then $I_X$ has an $(r+1)$-linear resolution by Theorem \ref{31}. This is impossible because $(I_X)_r\not=0$. Therefore
$$\deg(X)=\mu_S(R)=\binom{e+r}{r}-1.$$
\medskip

\noindent $(b)\Rightarrow (c)$: Suppose that $R$ is Cohen-Macaulay. Due to  Bermejo-Gimenez (see Proposition \ref{25}(c)), the initial ideal $\ini(I_X)$ is minimally generated by a set $\mathcal B$ of monomials in $x_0, \ldots, x_{e-1}$. Moreover, the generating set $\mathcal B$ contains a monomial $m$ of degree $r$ since $\dim_k(I_X)_r=1$.

On the other hand, as we have seen in the proof of $(b)\Rightarrow (a)$, the $S$-module $R$ is generated by all monomials in $x_0, \ldots, x_{e-1}$ of degrees at most $r$ except the monomial $m$. As these are all the standard monomials with respect to $\ini(I_X)+(x_e, \ldots, x_d)$, the set $\mathcal B$ contains all monomials in $x_0, \ldots, x_{e-1}$ of degree $r+1$ which are not multiples of $m$. This proves $(c)$.

\medskip

\noindent $(c)\Rightarrow (b)$: Again by Proposition \ref{25}, the $k$-algebra $R$ is Cohen-Macaulay. Hence $\reg(R)=r$ and $(I_X)_{>r}$ has a linear resolution. The fact $\dim_k(I_X)_r=1$ is clear from the assumption.
\end{proof}

In the next we give some examples of arithmetically Cohen-Macaulay projective subschemes of almost maximal degree.

\begin{example}\label{41-1}({\bf Finite points with almost maximal degree})
Let $\Gamma \subset \bP^e$ be a non-degenerate set of finite points with reduction number $r$. In this case, the reduction number $r(\Gamma)=\reg(R_{\Gamma})$ (see Proposition \ref{21}). Therefore, it is $r$-normal and in particular, $\deg(\Gamma)=\binom{e+r}{r}-1$ if and only if there is only one hypersurface of degree $r$ containing $\Gamma $ and $\reg(R_{\Gamma})=r$ (cf. Example \ref{32-1}).
In this case, $\Gamma$ has the following Betti numbers
$$\beta_{i,r}=\binom{e+r}{i+r}\binom{r+i-1}{r}-\binom{e}{i},$$
for $i=1, \ldots, e$ (cf. Theorem \ref{45}).
\end{example}

\begin{example}\label{42}
Interesting examples of arithmetically Cohen-Macaulay closed subschemes of $\bP^{n+e}$ with almost maximal degree consist of Castelnuovo surface,  Castelnuovo $3$-fold and their higher dimensional analogues. Let $X$ be the Castelnuovo surface in $\bP^4$. It is smooth and arithmetically Cohen-Macaulay of degree $5$. The defining ideal of $X$ is generated by a quadric and two cubics. The $S_0$-minimal free resolution of the homogeneous coordinate ring $R$ is
$$0\rightarrow S_0(-4)^2\rightarrow S_0(-3)^2\oplus S_0(-2) \rightarrow S_0\rightarrow 0$$ with the Betti table

\begin{figure}[!htb]
\begin{tabular}{>{\centering}m{2cm}|>{\centering}m{1cm} >{\centering}m{1cm} c}
			&	0&		1&	2\\
			\hline
		0	&	1&		--&	--\\
		1	&	--&		1&	--\\
		2	&	--&		2&	2
\end{tabular}
\end{figure}

Assume that $S_2=k[x_2,x_3, x_4]$ is a Noether normalization of $R$. Then as an $S_2$-module,
$$R\simeq S_2\oplus S_2(-1)^2\oplus S_2(-2)^2.$$
In particular, $X$ has an almost maximal degree $\binom{2+2}{2}-1$ and $r(X)=2$.
Let $C$ be a generic hyperplane section of $X\subset \bP^4$ which is a smooth curve of degree $5$ and genus $2$
in $\bP^3$. Then, $C$ is also arithmetically Cohen-Macaulay with $r(C)=2$ (see \cite[Example 6.4.2]{Hart77}).
    The Betti table of its homogeneous coordinate ring $R_C$ is the same as that of $R$.
\end{example}

If we do not require the subschemes to be reduced and irreducible, examples are found easily. The following example comes from Theorem \ref{41}.

\begin{example}\label{43}
Let $S_0=k[x_0, \ldots, x_{n+e}]$. Let $J=(x_0, \ldots, x_{e-1})$ and $u\in J$ be a monomial of degree $r$. Put $I=(u)+J^{r+1}$. Then $V(I)\subset \bP^{n+e}$ is a subscheme of almost maximal degree.
\end{example}

In the second half of this section, we are going to prove Theorem \ref{45} which give the  explicite Betti tables of all arithmetically Cohen-Macaulay subschemes of $\bP^{n+e}$ of almost maximal degree. We first compute the Betti table of the subscheme in Example \ref{43}.

\begin{lemma}\label{44}
Let $S_0$, $u$, $J$, $I$ be as in Example \ref{43}. The Betti numbers of the ideal $I$ are
$$\beta_{ij}^{S_0}(I)=\begin{cases}
1 &\mbox{ if } i=0, j=r,\\
\binom{e+r}{i+r+1}\binom{r+i}{r}-\binom{e}{i+1} &\mbox{ if } 0\leq i\leq e-1, j=r+1,\\
0 &\mbox{ otherwise.}
\end{cases}
$$
\end{lemma}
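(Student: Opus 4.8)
The plan is to resolve $I$ by comparing it with the two simpler ideals $(u)$ and $J^{r+1}$, whose resolutions are completely understood, and to exploit the fact that everything in sight has a linear resolution so that the relevant long exact sequences of $\Tor$ degenerate into short exact sequences. Concretely, I would work with the two short exact sequences
$$0\to (u)\to I\to I/(u)\to 0,\qquad 0\to uJ\to J^{r+1}\to J^{r+1}/uJ\to 0,$$
and identify $I/(u)\cong J^{r+1}/uJ$. The key elementary computation is $(J^{r+1}:u)=J$: since $u$ is a monomial of degree $r$ in $x_0,\dots,x_{e-1}$, a monomial $v$ satisfies $uv\in J^{r+1}$ if and only if $v$ has positive degree in $x_0,\dots,x_{e-1}$, i.e. if and only if $v\in J$. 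This gives $(u)\cap J^{r+1}=uJ$, hence the isomorphism $I/(u)\cong J^{r+1}/uJ$.

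Next I would record the $\Tor$ of the building blocks. The module $(u)\cong S_0(-r)$ is free, so its only nonzero $\Tor$ sits in homological degree $0$ and internal degree $r$. The ideal $J^{r+1}=(x_0,\dots,x_{e-1})^{r+1}$ has an $(r+1)$-linear resolution with $\beta_{i,r+1}^{S_0}(J^{r+1})=\binom{e+r}{i+r+1}\binom{i+r}{r}$ for $0\le i\le e-1$ (the ideal form of Corollary \ref{32}(c)). Finally $uJ\cong J(-r)$, and $J=(x_0,\dots,x_{e-1})$ is resolved by the Koszul complex, so $uJ$ also has a linear resolution with $\beta_{i,r+1}^{S_0}(uJ)=\binom{e}{i+1}$ for $0\le i\le e-1$. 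Thus each of $(u),\,uJ,\,J^{r+1}$ has its $\Tor$ concentrated in a single internal degree.

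The crux is to show that $I/(u)\cong J^{r+1}/uJ$ again has a linear resolution. It is generated in degree $r+1$, because its part of degree $\le r$ vanishes: $(I)_{\le r}=ku=((u))_{\le r}$. On the other hand $\reg(J^{r+1})=r+1$ and $\reg(uJ)=\reg(J)+r=r+1$, so the second short exact sequence yields $\reg\big(I/(u)\big)\le\max\{r+1,\,(r+1)-1\}=r+1$. A module generated in the single degree $r+1$ with regularity at most $r+1$ necessarily has an $(r+1)$-linear resolution (its $\Tor$ at homological step $i$ must lie in internal degree $\ge i+r+1$ by minimality and $\le i+r+1$ by the regularity bound). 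This linearity is the only delicate point of the argument.

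Once linearity of $I/(u)$ is known, I would feed it back into the two long exact sequences of $\Tor^{S_0}(-,k)$. Because $uJ,\,J^{r+1},\,I/(u)$ all have $\Tor$ living only in internal degree $i+r+1$ at homological step $i$, consecutive homological terms occupy different internal degrees, so the sequence attached to the second row collapses into the short exact sequences
$$0\to \Tor_i^{S_0}(uJ,k)_{i+r+1}\to \Tor_i^{S_0}(J^{r+1},k)_{i+r+1}\to \Tor_i^{S_0}(I/(u),k)_{i+r+1}\to 0,$$
giving $\beta_{i,r+1}(I/(u))=\binom{e+r}{i+r+1}\binom{i+r}{r}-\binom{e}{i+1}$. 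Similarly, since $(u)$ contributes only in homological degree $0$ and internal degree $r$, all connecting maps in the first row vanish for degree reasons, so $\Tor_i^{S_0}(I,k)\cong \Tor_i^{S_0}((u),k)\oplus \Tor_i^{S_0}(I/(u),k)$; this produces $\beta_{0,r}(I)=1$ and $\beta_{i,r+1}(I)=\beta_{i,r+1}(I/(u))$, which is precisely the asserted table, uniformly in the choice of $u$. Everything beyond the linearity claim is routine degree bookkeeping with the graded long exact sequence of $\Tor$.
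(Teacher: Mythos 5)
Your argument is correct and follows essentially the same route as the paper: the same two short exact sequences $0\to(u)\to I\to I/(u)\to 0$ and $0\to uJ\to J^{r+1}\to J^{r+1}/uJ\to 0$, the same identification $(u)\cap J^{r+1}=uJ$, the same key point that $J^{r+1}/uJ$ has an $(r+1)$-linear resolution, and the same degree bookkeeping to split the long exact sequences of $\Tor$. The only (harmless, arguably cleaner) deviation is that you deduce $\reg\bigl(J^{r+1}/uJ\bigr)\le r+1$ from the second sequence using $\reg(J^{r+1})=\reg(uJ)=r+1$, whereas the paper reads it off the first sequence from the a priori fact that $\reg(I)=r+1$.
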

\begin{proof}
We denote $K=J^{r+1}$. Then $(u)\cap K=uJ$. From the short exact sequence
$$0\rightarrow (u)\rightarrow I\rightarrow I/(u)\rightarrow 0,$$
and the fact that $I/(u)\simeq (u)+K/(u)\simeq K/(u)\cap K=K/uJ$ and $(u)\simeq S_0[-r]$, we obtain a short exact sequence
$$0\rightarrow S_0[-r]\rightarrow I\rightarrow K/uJ\rightarrow 0.$$
This gives rise to a long exact sequence
$$\ldots \rightarrow \Tor_{i+1}^{S_0}(k, K/uJ)_{i+j} \rightarrow \Tor_i^{S_0}(k, S_0)_{i+j-r} \rightarrow \Tor_i^{S_0}(k, I)_{i+j} \rightarrow \Tor_i^{S_0}(k, K/uJ)_{i+j} \rightarrow \ldots$$
Since $S_0[-r]$ has the regularity $r$ and $I$ has the regularity $r+1$, the regularity of $K/uJ$ is $r+1$. This together with the fact $K/uJ$ being generated by degree $(r+1)$-elements imply that $K/uJ$ has an $(r+1)$-linear minimal free resolution.

Now we show that the homomorphism $\Tor_{i+1}^{S_0}(k, K/uJ)_{i+j} \rightarrow \Tor_i^{S_0}(k, S_0)_{i+j-r}$ in the long exact sequence above is actually zero. Indeed, if $j\not= r+2$ then $\Tor_{i+1}^{S_0}(k, K/uJ)_{i+j}=0$. If $j=r+2$ then $\Tor_i^{S_0}(k, S_0)_{i+j-r}=0$. This proves the claim. Consequently, we have a short exact sequence
$$0 \rightarrow \Tor_i^{S_0}(k, S_0)_{i+j-r} \rightarrow \Tor_i^{S_0}(k, I)_{i+j} \rightarrow \Tor_i^{S_0}(k, K/uJ)_{i+j} \rightarrow 0,$$
for all $i, j$. This particularly implies that
$$\beta_{ij}^{S_0}(I)=\beta_{i, j-r}^{S_0}(S_0)+\beta_{ij}^{S_0}(K/uJ).$$

In order to compute the Betti numbers of $K/uJ$, we use the short exact sequence
$$0\rightarrow J[-r]\stackrel{*u}{\longrightarrow} K\rightarrow K/uJ\rightarrow 0.$$
Again, there is a long exact sequence
$$\ldots \rightarrow \Tor_{i+1}^{S_0}(k, K/uJ)_{i+j} \rightarrow \Tor_i^{S_0}(k, J)_{i+j-r} \rightarrow \Tor_i^{S_0}(k, K)_{i+j} \rightarrow \Tor_i^{S_0}(k, K/uJ)_{i+j} \rightarrow \ldots$$
Note that all $J[-r]$, $K$ and $K/uJ$ have $(r+1)$-linear resolutions. By analogous argument as in the first part of the proof, we obtain a short exact sequence
$$0 \rightarrow \Tor_i^{S_0}(k, J)_{i+j-r} \rightarrow \Tor_i^{S_0}(k, K)_{i+j} \rightarrow \Tor_i^{S_0}(k, K/uJ)_{i+j} \rightarrow 0,$$
for all $i, j$. Therefore
$$\beta_{ij}^{S_0}(I)=\beta_{i, j-r}^{S_0}(S_0)+\beta_{ij}^{S_0}(K)-\beta_{i,j-r}^{S_0}(J).$$

Now, using Corollary \ref{32}, finally we get an explicit formula for the Betti numbers of $I$, namely,
$$\beta_{ij}^{S_0}(I)=\begin{cases}
1 &\mbox{ if } i=0, j=r,\\
\binom{e+r}{i+r+1}\binom{r+i}{r}-\binom{e}{i+1} &\mbox{ if } 0\leq i\leq e-1, j=r+1,\\
0 &\mbox{ otherwise.}
\end{cases}
$$
\end{proof}

Lemma \ref{44} does not only give the explicit Betti table for a particular subscheme of almost maximal degree but is also very useful when we compute the Betti table for the general case. Now we use this lemma to prove Theorem \ref{45}.

\begin{proof}[\bf Proof of Theorem \ref{45}]

Let $R=S_0/I_X$ be the homogeneous coordinate ring of $X$. We assume that $S=k[x_e, \ldots, x_{n+e}]\subset R$ is a Noether normalization of $R$ with the reduction number $r_S(R)=r$. We consider $R$ as an $S_0$-module and as an $S$-module.

As an $S$-module, the Betti number of $R$ is
$$\beta_{ij}^S(R)=\begin{cases}
\binom{e+j-1}{j} &\mbox{ if } i=0, 0\leq j\leq r-1,\\
\binom{e+r-1}{r}-1 &\mbox{ if } i=0, j=r,\\
0 &\mbox{ otherwise.}
\end{cases}$$
Thus
$$\chi_m^S(R)=\sum_{j=0}^m(-1)^j\beta_{m-j, j}^S(R)=(-1)^m\beta_{0,m}^S(R)=
\begin{cases}
(-1)^m\binom{e+m-1}{m} &\mbox{ if } 0\leq m\leq r-1,\\
(-1)^m\binom{e+r-1}{r}-1 &\mbox{ if } m=r,\\
0 &\mbox{ otherwise.}
\end{cases}$$
Using Corollary \ref{27}(a) which relates the Betti numbers of $R$ over $S$ with those over $S_0$, we are able to find a nice relation between Betti numbers of $R$ over $S_0$, namely,
\[\begin{aligned}
\chi_{m}^{S_0}(R)
&=\sum_{j=0}^e\binom{e}{j}\chi_{m-j}^{S_e}(R)\\
&=\sum_{j=m-r}^e(-1)^{m-j}\binom{e}{j}\binom{e+m-j-1}{m-j}-(-1)^r\binom{e}{m-r}\\
\end{aligned}\]

On the other hand, by Theorem \ref{41}, the initial ideal $\ini(I_X)$ with respect to the degree reverse lexicographic order is generated by all monomials in $x_0, x_1, \ldots, x_{e-1}$ of degree $r+1$ and a monomial in $x_0, x_1, \ldots, x_{e-1}$ of degree $r$. Lemma \ref{44} applies to $\ini(I_X)$ and we have
$$\beta_{ij}^{S_0}(\ini(I_X))=0,$$
for either $j\not=r, r+1$, or $j=r, i>0$, or $j=r+1, i\geq e$. Now, by comparing the Betti numbers of $I_X$ and its initial ideal (see, for example, \cite[Corollary 1.21]{Gre98}), we get
$$\beta^{S_0}_{ij}(I_X)\leq \beta^{S_0}_{ij}(\ini(I_X)),$$
for any $i, j$. So
$$\beta_{ij}^{S_0}(I_X)=0,$$
for either $j\not=r, r+1$, or $j=r, i>0$, or $j=r+1, i\geq e$. This implies that
$$\chi_{i+r}^{S_0}(R)
=\sum_{j=0}^{i+r}(-1)^j\beta_{r+i-j, j}^{S_0}(R)
=(-1)^r\beta_{i, r}^{S_0}(R).$$

Combining these with Lemma \ref{33}, we therefore obtain
\[\begin{aligned}
\beta_{i, r}^{S_0}(R)
&=\sum_{j=i}^e(-1)^{i-j}\binom{e}{j}\binom{e+i+r-j-1}{i+r-j}-\binom{e}{i}\\
&=\binom{e+r}{i+r}\binom{r+i-1}{i-1}-\binom{e}{i}.
\end{aligned}\]
\end{proof}


\section{The non-arithmetically Cohen-Macaulay varieties with almost maximal degree}

In this section we consider projective subschemes with almost maximal degree which are not arithmetically Cohen-Macaulay. It is much more complicated to explore this class of subschemes than the arithmetically Cohen-Macaulay subschemes in the previous section. Fortunately, in the case of reduced and irreducible projective subschemes, we have the following key theorem.

\begin{theorem}\label{51}
Let $X\subset \bP^{n+e}$ be a non-degenerate projective variety of dimension $n$, codimension $e$ and reduction number $r$. Assume that $\deg(X)=\binom{e+r}{r}-1$. Then the homogeneous coordinate ring of $X$ has depth $\geq n$.
\end{theorem}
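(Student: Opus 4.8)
The plan is to descend to the Noether normalization and prove the equivalent statement $\pd_S(R)\le 1$, where $S=S_e=k[x_e,\dots,x_{n+e}]\hookrightarrow R$ is the Noether normalization attached to the reduction number, as in Section 2. Since $S_+R$ is $R_+$-primary, one has $\depth(R)=\depth_S(R)$, and the Auslander--Buchsbaum formula over the regular ring $S$ gives $\depth_S(R)=(n+1)-\pd_S(R)$; thus $\depth(R)\ge n$ is exactly the assertion $\pd_S(R)\le 1$. I would resolve $R$ one step over $S$: letting $F_0$ be the free $S$-module on a minimal generating set of $R$ and setting $\Omega=\Ker(F_0\twoheadrightarrow R)$, there is a short exact sequence
\[
0\to \Omega\to F_0\to R\to 0,
\]
and it suffices to prove that $\Omega$ is a free $S$-module.

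First I would pin down the rank of $\Omega$. Because $X$ is a variety, $R$ is a domain that is module-finite over $S$, so its multiplicity equals its generic rank: $\operatorname{rank}_S(R)=\deg(X)=\binom{e+r}{r}-1$ (multiplicativity of multiplicity in the finite extension $S\hookrightarrow R$, together with $e(S)=1$). On the other hand $\operatorname{rank}_S(F_0)=\mu_S(R)\le\binom{e+r}{r}$ by Proposition \ref{23}. Taking ranks in the sequence above gives $\operatorname{rank}_S(\Omega)=\mu_S(R)-\deg(X)\le 1$, so $\Omega$ is a torsion-free $S$-module of rank at most one.

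The crucial point, and the place where the hypothesis that $X$ is a variety is indispensable, is that $\Omega$ is in fact a \emph{second} syzygy. Indeed, as a domain $R$ is a torsion-free $S$-module, hence it embeds into a finite free $S$-module; so $R$ is itself a first syzygy, and therefore $\Omega=\Ker(F_0\to R)$ is a second syzygy over $S$. A standard application of the depth lemma along a free resolution then shows that a second syzygy over the regular ring $S$ satisfies Serre's condition $S_2$, i.e.\ $\depth_{\fp}\Omega\ge\min(2,\dim S_{\fp})$ for every prime $\fp$; being torsion-free and $S_2$ over the normal domain $S$, the module $\Omega$ is reflexive. Finally, a reflexive module of rank $\le 1$ over the factorial ring $S$ is free, since its divisor class is trivial. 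Hence $\Omega$ is free, $\pd_S(R)\le 1$, and $\depth(R)\ge n$.

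I expect the main obstacle to be exactly this reflexivity step, and it is precisely here that irreducibility and reducedness are used: for a non-reduced or reducible subscheme $R$ acquires $S$-torsion (or embedded behaviour), the module $\Omega$ is then only a \emph{first} syzygy, and its rank-one part need not be free, so $\depth(R)$ can fall below $n$. This accounts for the remark that the assumption of being a projective variety is essential. A secondary routine point to verify is the identification $\operatorname{rank}_S(R)=\deg(X)$ used above.
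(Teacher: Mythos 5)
Your proposal is correct, and it takes a genuinely different route from the paper's. The paper argues through Gr\"obner bases: it first shows that $\ini(I_X)$ is generated by all degree-$(r+1)$ monomials in $x_0,\dots,x_{e-1}$ together with monomials $uv_1,\dots,uv_s$ for a single monomial $u$ of degree $r$; it then uses primality of $I_X$ (an elimination between elements $h_i=uq_i+\sum_j u_jq_{ij}$ of the reduced Gr\"obner basis, combined with the linear independence over $S$ of the standard monomials other than $u$) to force $s=1$; it reads off $\depth(S_0/\ini(I_X))\ge n$ from the presentation $0\to S_0/(T_{r+1})\to S_0/(T_{r+1})\to S_0/\ini(I_X)\to 0$; and finally it transfers the bound to $R$ via the semicontinuity $\beta_{ij}^{S_0}(S_0/I_X)\le\beta_{ij}^{S_0}(S_0/\ini(I_X))$ and Auslander--Buchsbaum. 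You instead stay entirely over the Noether normalization: the rank count $\operatorname{rank}_S(\Omega)=\mu_S(R)-\deg(X)\le 1$ is where almost-maximality enters, and the variety hypothesis enters as ``$R$ is a domain, hence $S$-torsion-free, hence itself a first syzygy,'' which upgrades $\Omega$ to a second syzygy, hence (by the depth lemma) to a module satisfying $(S_2)$, hence to a reflexive and therefore free module of rank $\le 1$ over the factorial ring $S$. Both proofs use reducedness and irreducibility in an essential way, consistent with Example \ref{59}, where $\bar u$ is a nonzero $S$-torsion element of $R$. Your argument is shorter and more conceptual, and it directly yields the $S$-module structure of $R$ recorded in Corollary \ref{52}(d),(e); what it does not produce is the explicit initial ideal $\ini(I_X)=(T_{r+1},uv)$ of Corollary \ref{52}(c), which the paper extracts from Steps 1--3 of its proof and then needs, via Lemma \ref{53} and the Cancellation Principle, to compute the $S_0$-Betti tables of Theorem \ref{54}; so the Gr\"obner analysis cannot be discarded from the paper even if Theorem \ref{51} itself is proved your way. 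The only point you should write out in full is the identification $\deg(X)=\operatorname{rank}_S(R)$, which follows from the associativity formula for multiplicities applied to the finite extension $S\hookrightarrow R$ of graded domains of the same dimension (or from comparing leading terms of Hilbert polynomials).
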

\begin{proof}
Let $I\subset S_0=k[x_0, \ldots, x_{n+e}]$ be the defining prime ideal of $X$ and $R=S_0/I$. Assume that $S=S_e=k[x_e, \ldots, x_{n+e}]$ is a Noether normalization of $R$ such that the reduction number $r_S(R)=r$. It is enough to prove the assertion for the case $X$ being not arithmetically Cohen-Macaulay.

The proof consists of several steps. Note that we fix the degree reverse lexicographic order on the monomials of $S_0$.

\medskip
\noindent Step 1. We show that the initial ideal $\ini(I)$ has a minimal set of generators consisting of all monomials in $x_0, \ldots, x_{e-1}$ of degree $r+1$ and some monomials  $uv_1, \ldots, uv_s$, where $u$ is a monomial in $x_0, \ldots, x_{e-1}$ of degree $r$ and $v_1, \ldots, v_s$ are monomials in $x_e, \ldots, x_{n+e}$ of positive degree.

Denote the set of all monomials in $x_0, \ldots, x_{e-1}$ of degree $d$ by $T_d$, for $d\geq 0$.

Using Propositions \ref{23} and \ref{25}, we have $\deg(X)\leq \mu_S(R)\leq \binom{r+e}{r}$. Since $X$ is not arithmetically Cohen-Macaulay, the inequality $\deg(X)<\mu_S(R)$ is strict and thus $\mu_S(R)=\binom{e+r}{r}$. It together with Proposition \ref{23} and Lemma \ref{24} shows that the monomials in $T_0\cup T_1\cup\ldots \cup T_r$ are all monomials not being contained in the ideal $\ini(I)+(x_e, \ldots, x_{n+e})$. This also shows that $\ini(I)$ has a minimal set of generators consisting of $T_{r+1}$ and some monomials $u_1v_1, \ldots, u_sv_s$ where $u_i\in T_j$ for some $j\leq r$ and $v_i\in S_+$. Here it is worth noting that $\deg(u_i)>0$ since $S_0/I$ and $S_0/\ini(I)$ have the same Krull dimension.

We proceed to show that $u_1=\ldots=u_s$ by computing the degree of $S_0/\ini(I)$. It suffices to look at the minimal prime ideals of $\ini(I)$. Since
$$\ini(I)=(u_1, u_2v_2, \ldots, u_sv_s, T_{r+1})\cap (v_1, u_2v_2, \ldots, u_sv_s, T_{r+1}),$$
the degree of $S_0/\ini(I)$ is the same as the degree of $S_0/(u_1, u_2v_2, \ldots, u_sv_s, T_{r+1})$. Taking a similar decomposition with respect to $u_2v_2, \ldots, u_sv_s$, we get
$$\deg(S_0/\ini(I))=\deg S_0/(u_1, \ldots, u_s, T_{r+1}).$$
Note that $(u_1, \ldots, u_s, T_{r+1})$ is a monomial ideal and $S_0/(u_1, \ldots, u_s, T_{r+1})$ is a Cohen-Macaulay algebra with a maximal regular sequence $x_e, \ldots, x_{n+e}$. Then, in order to compute the degree of $S_0/(u_1, \ldots, u_s, T_{r+1})$, it suffices to count the monomials in $T_0\cup T_1\cup\ldots\cup T_r$ which are not in $(u_1, \ldots, u_s, T_{r+1})$. On the other hand,
we have totally $\binom{e+r}{r}$ monomials in the set $T_0\cup T_1\cup\ldots\cup T_r$ and
$$\deg(S_0/I)=\deg (S_0/\ini(I))=\deg(S_0/(u_1, u_2, \ldots, u_s, T_{r+1}))=\binom{e+r}{r}-1.$$
So there is only one  monomial of degree at most $r$ in $(u_1, u_2, \ldots, u_s, T_{r+1})$
which is not in the ideal $\ini(I)$. This implies that $u_1=u_2=\ldots=u_s$ which is of degree $r$. Denote this monomial by $u$.

\medskip
\noindent Step 2. The aim of this step is to prove that the equivalence classes in $R$ of the monomials in $(T_0\cup T_1\cup \ldots \cup T_r)\setminus \{u\}$ are linearly independent over $S$.
\smallskip

Let's denote the monomials in the set $(T_0\cup T_1\cup \ldots \cup T_r)\setminus \{u\}$ by $u_1, \ldots, u_N$, where $N=\binom{e+r}{r}-1$. Assume that there are $f_1, \ldots, f_N\in S$ which are not all identically zero and satisfy the relation
$$f_1\bar u_1+\ldots+f_N\bar u_N=0.$$
Let $f=f_1u_1+\ldots+f_Nu_N\in I$. We can assume in addition that $f_1, \ldots, f_N$ are homogeneous polynomials such that $f$ is also homogeneous. Obviously the monomials $u_1, \ldots, u_N\in S_0$ are linearly independent over $S$, so $f\not=0$. Let $\ini(f)=\lambda m_1m_2$, where $\lambda\in k$, $\lambda\not=0$, and $m_1\in(T_0\cup T_1\cup \ldots \cup T_r)\setminus \{u\}$, $m_2\in S$. This contradicts to the fact that $\ini(f)$ lies in the ideal $\ini(I)$, the latter is minimally generated over $S$ by $T_{r+1}\cup \{uv_1, \ldots, uv_s\}$. This completes the proof for Step 2.

\medskip
\noindent Step 3. We prove that $\ini(I)$ is minimally generated by $T_{r+1}\cup \{uv_1\}$ using the fact
that $I$ is a prime ideal.
\smallskip

As we have shown in Step 1, the initial ideal $\ini(I)$ is minimally generated by the monomials in $T_{r+1}$ and some monomials $uv_1, \ldots, uv_s$, where $u\in T_r$ and $v_1, \ldots, v_s\in S_+$. For the convenience, we denote the monomials in $T_{r+1}$ by $m_1, \ldots, m_t$, where $t=\binom{e+r}{r+1}$. Let $h_1, \ldots, h_s$ be part of the reduced Gr{\"o}bner basis with $\ini(h_j)=uv_j$. Observe that the polynomials $h_1, \ldots, h_s$ are irreducible as $I$ is a prime ideal.

Since no trailing terms of any polynomial in the Gr{\"o}bner basis lie in the initial ideal $\ini(I)$, we write
$$h_i=uq_i+\sum_{j=1}^N u_jq_{ij},$$
where $q_i, q_{i1}, \ldots, q_{iN}$ are homogeneous polynomials in $S$ and $\ini(q_i)=v_i$. The irreducibility of $h_i$ implies particularly that $q_i, q_{i1}, \ldots, q_{iN}$ have no common factors of positive degree.

Assume that $s\geq 2$. Then
$$q_2h_1-q_1h_2=\sum_{i=1}^Nu_j(q_2q_{1j}-q_1q_{2j})$$
which is in the ideal $I$. By Step 2, the equivalence classes of the monomials $u_1, \ldots, u_N$ in $R$ are linearly independent over $S$. Hence $q_2q_{1j}-q_1q_{2j}=0$ for all $j=1, \ldots, N$. Since $q_i, q_{ij}\in S$ have unique irreducible factorizations and $q_i, q_{i1}, \ldots, q_{iN}$ have no common factors of positive degree for each $i=1, 2$, it implies that $q_1=\lambda q_2$ for some $\lambda\in k$. This is impossible because it implies $v_1=\ini(q_1)=\lambda\ini(q_2)=\lambda v_2$. Therefore $s=1$ and $\ini(I)$ is minimally generated by $T_{r+1}\cup \{uv_1\}$.

\medskip
\noindent Step 4. Finally, we show that $\depth(R)=n$.
\smallskip

By Step 3, the initial ideal $\ini(I)$ is minimally generated by $T_{r+1}\cup\{uv_1\}$. We have a short exact sequence
$$0\rightarrow S_0/(T_{r+1})\stackrel{*uv_1}{\longrightarrow} S_0/(T_{r+1})\rightarrow S_0/\ini(I) \rightarrow 0.$$
Since $S_0/(T_{r+1})$ is Cohen-Macaulay of dimension $n+1$, we obtain
$$\depth(S_0/\ini(I))\geq n.$$

Since $\beta^{S_0}_{ij}(S_0/I)\leq \beta^{S_0}_{ij}(S_0/\ini(I))$ for any $i, j$ (see, for example, \cite[Corollary 1.21]{Gre98}), we obtain in particular $$\mathrm{proj.dim}_{S_0}(S_0/I)\leq \mathrm{proj.dim}_{S_0}(S_0/\ini(I)).$$
The Auslander-Buchsbaum formula then implies that $\depth(S_0/I)\geq \depth(S_0/\ini(I))\geq n$. As $S_0/I$ is not Cohen-Macaulay, $\depth(S_0/I)=n$.
\end{proof}

In Theorem \ref{51}, we really use the fact that $X$ is a projective variety, i.e., a reduced and irreducible projective subscheme. We will see later in Example \ref{59} that this assumption is necessary and can not be omitted.

The following consequence follows from the proof of Theorem \ref{51}.

\begin{corollary}\label{52}
Let $X\subset \bP^{n+e}$ be a non-degenerate closed subscheme of dimension $n$ and reduction number $r$. Let $I\subset S_0$ be the defining ideal of $X$ and $R=S_0/I$. Let $S=S_e=k[x_e, \ldots, x_{n+e}]$ and suppose the natural map $S\hookrightarrow R$ is a Noether normalization of $R$ with the reduction number $r_S(R)=r$. Again, we fix the degree reverse lexicographic order on the monomials of $S_0$. The following are equivalent:
\begin{enumerate}
\item[(a)] $\deg(X)=\binom{e+r}{r}-1$ and $X$ is not arithmetically Cohen-Macaulay;
\item[(b)] The initial ideal $\ini(I)$ is generated by all monomials in $x_0, \ldots, x_{e-1}$ of degree $r+1$ and some monomials  $uv_1, \ldots, uv_s$, where $u$ is a monomial in $x_0, \ldots, x_{e-1}$ of degree $r$ and $v_1, \ldots, v_s\in S_+$;
\end{enumerate}
If $X$ is a projective variety then the above equivalent statements are equivalent to one of the following statements:
\begin{enumerate}
\item[(c)] $\ini(I)$ is generated by all monomials in $x_0, \ldots, x_{e-1}$ of degree $r+1$ and a monomial  $uv$, where $u$ is a monomial in $x_0, \ldots, x_{e-1}$ of degree $r$ and $v\in S_+$;
\item[(d)] $R$, as an $S$-module, has the Betti numbers
$$\beta_i^S(R)=\begin{cases}
\binom{e+r}{r} &\mbox{ if } i=0,\\
1 &\mbox{ if } i=1,\\
0 &\mbox{ if } i>1;
\end{cases}$$
\item[(e)] $R$, as an $S$-module, has the graded Betti numbers
$$\beta_{i,j}^S(R)=\begin{cases}
\binom{e+j-1}{j} &\mbox{ if } i=0, 0\leq j\leq r,\\
1 &\mbox{ if } i=1, j=\reg(R),\\
0 &\mbox{ if } i=1, j\not=\reg(R) \text{ or } i>1.
\end{cases}$$
\end{enumerate}
\end{corollary}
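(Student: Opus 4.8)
The plan is to run a single cycle of implications erected on the four-step argument of Theorem~\ref{51}, reusing Steps~1--3 wholesale and contributing only the homological bookkeeping that produces statements $(d)$ and $(e)$. First I would settle $(a)\Leftrightarrow(b)$, which needs no primeness. The direction $(a)\Rightarrow(b)$ is exactly Step~1 of the proof of Theorem~\ref{51}: since $X$ is not arithmetically Cohen--Macaulay one has $\mu_S(R)=\binom{e+r}{r}$, and the degree computation $\deg(S_0/\ini(I))=\deg(S_0/(u_1,\dots,u_s,T_{r+1}))=\binom{e+r}{r}-1$ collapses all the $u_i$ to a single monomial $u$ of degree $r$. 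For $(b)\Rightarrow(a)$ I would read the standard monomials off $\ini(I)+(x_e,\dots,x_{n+e})=(T_{r+1})+(x_e,\dots,x_{n+e})$, obtaining $\mu_S(R)=\binom{e+r}{r}$; the presence of a generator $uv_i$ involving $S_+$ makes $R$ non-Cohen--Macaulay by Proposition~\ref{25}, and the same degree count gives $\deg(X)=\binom{e+r}{r}-1$.

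Next, assuming $X$ is a variety, so that $I$ is prime, I would take $(b)\Rightarrow(c)$ verbatim from Steps~2 and~3 of Theorem~\ref{51}: Step~2 proves the classes of $(T_0\cup\dots\cup T_r)\setminus\{u\}$ are $S$-linearly independent, and Step~3 invokes the irreducibility of the Gr\"obner basis elements $h_i$ (forced by primeness) to rule out $s\ge 2$, leaving $s=1$. The converse $(c)\Rightarrow(b)$ is immediate.

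The genuinely new content is the chain $(c)\Rightarrow(e)\Rightarrow(d)\Rightarrow(a)$. Starting from $(c)$, Theorem~\ref{51} gives $\depth(R)=n$; since $R$ is finite over $S$ we have $\depth_S(R)=\depth(R)$, and as $S$ is regular of dimension $n+1$ the Auslander--Buchsbaum formula yields $\pd_S(R)=1$. Thus the graded resolution over $S$ has the shape $0\to F_1\to F_0\to R\to 0$ with $F_0=\bigoplus_{j=0}^r S(-j)^{\binom{e+j-1}{j}}$ read off the monomial generators $T_0\cup\dots\cup T_r$, so $\beta_{0,j}^S(R)=\binom{e+j-1}{j}$ for $0\le j\le r$; using $\mathrm{rank}_S(R)=\deg(X)=\binom{e+r}{r}-1$, a rank count forces $\beta_1^S(R)=1$. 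The unique first syzygy sits in the internal degree of the relation $h$ with $\ini(h)=uv$, namely $r+\deg(v)$, so $\beta_{1,j}^S(R)=1$ at $j=r+\deg(v)-1$; since $\deg(v)\ge 1$ this equals $\reg_S(R)$, which in turn equals $\reg(R)$ by Corollary~\ref{27}(d). This is exactly $(e)$. Then $(e)\Rightarrow(d)$ follows by summing over $j$ via $\sum_{j=0}^r\binom{e+j-1}{j}=\binom{e+r}{r}$, and $(d)\Rightarrow(a)$ closes the cycle: the resolution $0\to S(-c)\to F_0\to R\to 0$ gives $\deg(X)=\mathrm{rank}_S(R)=\binom{e+r}{r}-1$, while $\mu_S(R)=\beta_0^S(R)=\binom{e+r}{r}>\deg(X)$ forces $R$ non-Cohen--Macaulay by Proposition~\ref{25}.

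The main obstacle is twofold. The delicate input is inherited from Theorem~\ref{51}, namely the reduction $s\ge 2\Rightarrow s=1$ in Step~3: this is the only place primeness (hence the variety hypothesis) enters, and it is precisely what makes $(c)$ hold. The new difficulty is locating the single syzygy in the grading. Producing $\beta_1^S(R)=1$ through Auslander--Buchsbaum and a rank count is clean, but certifying that its degree is exactly $\reg(R)$ requires matching the resolution over $S$ with the regularity over $S_0$ via Corollary~\ref{27}(d), together with the observation $\deg(v)\ge 1$ so that $r+\deg(v)-1\ge r$ and the syzygy, rather than a generator, governs $\reg_S(R)$.
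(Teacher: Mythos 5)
Your plan is correct and follows the paper's own proof essentially verbatim: both reuse Steps 1--3 of the proof of Theorem \ref{51} for $(a)\Leftrightarrow(b)\Rightarrow(c)$, and then derive $\pd_S(R)=1$ from $\depth(R)=n$ via Auslander--Buchsbaum (the paper routes through $\pd_{S_0}(R)=e+1$ and Corollary \ref{27}(c), while you work directly over $S$; both are fine) and pin down the single syzygy by the rank count $\mu_S(R)-\deg(R)=1$. The one step you assert rather than prove is the lower bound on the degree of that syzygy: to conclude that the generator of the rank-one syzygy module sits in internal degree $\deg(uv)\ge r+1$ (hence in row $\reg(R)$), and is not some relation of degree $\le r$ of which $h$ is a multiple, you must, as the paper does, combine the $S$-linear independence of the classes of $(T_0\cup\dots\cup T_r)\setminus\{u\}$ from Step~2 with the observation that the leading term of the corresponding element of $I$ must lie in $\ini(I)=(T_{r+1},uv)$ and hence be divisible by $uv$.
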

\begin{proof}
We have shown in Steps 1 and 2 of the proof of Theorem \ref{51} that $(a)\Rightarrow (b)$. For $(b)\Rightarrow (a)$, suppose $\ini(I)=(x_0, \ldots, x_{e-1})^{r+1}+(uv_1, \ldots, uv_s)$ as in $(b)$, then  $R$ is not Cohen-Macaulay due to Proposition \ref{25}. Furthermore, we have
$$\deg(R)=\deg(S_0/\ini(I))=\deg(S_0/(x_0, \ldots, x_{e-1})^{r+1}+(u))=\binom{e+r}{r}-1.$$
So $(a)$ is equivalent to $(b)$. 

Now suppose that $X$ is a projective variety. By Step 3 in the proof of Theorem \ref{51}, we have $(b)\Rightarrow (c)$. The implications $(e)\Rightarrow (d)\Rightarrow (a)$ are obvious. We will show that $(c)\Rightarrow (a)$ and $(a)\Rightarrow (e)$.

\smallskip
\noindent $(c)\Rightarrow (a)$: We denote by $T_d$ the set of the monomials in $x_0, \ldots, x_{e-1}$ of degree $d$.

Assume that $\ini(I)$ has a minimal set of generators consisting of $T_{r+1}$ and some monomials $m_i=uv_i$, $i=1, \ldots, s$, with $u\in T_r$ and $v_i\in S_+$. We have
$$\deg(R)=\deg(S_0/\ini(I))=\deg(S_0/(T_{r+1}, u))=\binom{e+r}{r}-1.$$
On the other hand, since $T_{r+1}$ is a part of a minimal set of generators of $\ini(I)$, the set of standard monomials with respect to $\ini(I)+(x_e, \ldots, x_{n+e})$ is exactly $T_0\cup T_1\cup \ldots \cup T_r$. There are $\binom{e+r}{r}$ such monomials. Using again Proposition \ref{23} and Lemma \ref{24}, we conclude that
$$\mu_S(R)=\binom{e+r}{r}>\deg(R),$$
and $R$ is not Cohen-Macaulay.

\smallskip
\noindent $(a)\Rightarrow (e)$: Suppose $X$ is a projective variety of almost maximal degree, i.e., $\deg(X)=\binom{e+r}{r}-1$, and $X$ is not arithmetically Cohen-Macaulay. We have shown in the proof of Theorem \ref{51} that $R$ is minimally generated over $S$ by all monomials in $x_0, \ldots, x_{e-1}$ of degree from $0$ to $r$. Consequently, we have
$$\sum_{j=0}^r\beta_{0,j}^S(R)=\binom{e+r}{r}.$$
On the other hand, $\beta_{0,j}^S(R)$ is bounded above by the number of monomials in $x_0, \ldots, x_{e-1}$ of degree $j$, i.e.,
$$\beta_{0,j}^S(R)\leq \binom{e+j-1}{j}.$$
Then
$$\sum_{j=0}^r\beta_{0,j}^S(R)\leq \sum_{j=0}^r\binom{e+j-1}{j}=\binom{e+r}{r}.$$
This implies that
$$\beta_{0,j}^S(R)=\binom{e+j-1}{j},$$
for all $j=0, 1, \ldots, r$.

Moreover, we know by Theorem \ref{51} that $R$ has depth $n$. The Auslander-Buchsbaum formula then gives us $\mathrm{proj.dim}_{S_0}(R)=\depth(S_0)-\depth(R)=e+1$. Hence $\mathrm{proj.dim}_S(R)=1$ by Corollary \ref{27}(c). Equivalently, $\beta_{i,j}^S(R)=0$ for all $i>1, j\geq 0$, and there is a positive integer $d>0$ such that
$$\beta_{1,j}^S(R)=\begin{cases}
\mu_S(R)-\deg(R)=1&\mbox{ if } j=d,\\
0&\mbox{ otherwise.}
\end{cases}$$
The minimal free $S$-resolution of $R$ thus is
$$0\rightarrow S(-d-1)\rightarrow \bigoplus_{i=0}^rS(-i)^{\binom{e+i-1}{i}}\rightarrow R\rightarrow 0.$$
It remains to show that $d=\reg(R)$, or  equivalently, $d\geq r$. In the proof of Theorem \ref{51}, we have seen that the initial ideal $\ini(I)$ with respect to the degree reverse lexicographic order is minimally generated by all monomials in $x_0, \ldots, x_{e-1}$ of degree $r+1$ and a monomial $uv$ where $v\in S_+$ and $u$ is a monomial in $x_0, \ldots, x_{e-1}$ of degree $r$ (see Step 3 in the proof of Theorem \ref{51}). Moreover, the equivalence classes in $R$ of the monomials in $x_0, \ldots, x_{e-1}$ of degree at most $r$ except $u$ are linearly independent over the ring $S$. Therefore, $d+1\geq \deg(uv)\geq r+1$, or $d\geq r$.
\end{proof}

So over the Noether normalization $S=S_e$ of the homogeneous coordinate ring $R$, the Betti table of a projective variety $X$ with almost maximal degree is described precisely. It depends on whether the variety is arithmetically Cohen-Macaulay (ACM) or non-arithmetically Cohen-Macaulay (non-ACM) and is either

\begin{figure}[!htb]
\captionsetup[subfigure]{width=4cm}
\subfloat[ACM]{\begin{tabular}{>{\centering}m{2cm}| c}
	&	$0$\\
\hline
$0$	&	$1$\\
$1$	&	$\binom{e}{1}$\\
$2$	&	$\binom{e+1}{2}$\\
$\vdots$	&	$\vdots$\\
$r$	&	$\binom{e+r-1}{r}$-1\\
\end{tabular}}
\hspace{1cm}
\subfloat[non-ACM, $\reg(R)=r$]{\begin{tabular}{>{\centering}m{1.5cm}| >{\centering}m{1.5cm} c}
	&	$0$	&	$1$\\
\hline
$0$	&	$1$	&	--\\
$1$	&	$\binom{e}{1}$&	 	--		\\
$2$	&	$\binom{e+1}{2}$&	 	--		\\
$\vdots$	&	$\vdots$&	 $\vdots$\\
$r$	&	$\binom{e+r-1}{r}$&	 	$1$\\
\end{tabular}}
\hspace{1cm}
\subfloat[non-ACM, $\reg(R)>r$]{\begin{tabular}{>{\centering}m{1.5cm}| >{\centering}m{1.5cm} c}
	&	$0$	&	$1$\\
\hline
$0$	&	$1$	&	--\\
$1$	&	$\binom{e}{1}$&	 	--		\\
$2$	&	$\binom{e+1}{2}$&	 	--		\\
$\vdots$	&	$\vdots$&	 $\vdots$\\
$r$	&	$\binom{e+r-1}{r}$&	 	--\\
$r+1$	&	--&	 	--\\
$\vdots$	&	$\vdots$&	 $\vdots$\\
$\reg(R)$	&	--&	 	$1$\\
\end{tabular}}
\end{figure}

\noindent From these tables we can recover the Betti table over the ring $S_0$ of the variety $X$. This is stated precisely in Theorem \ref{54}. In the next we will present a proof of this theorem. At first, we need the following lemma which is an analogue of Lemma \ref{44} in the arithmetically Cohen-Macaulay case.

\begin{lemma}\label{53}
Let $S_0=k[x_0, \ldots, x_{n+e}]$ and $J=(x_0, \ldots, x_{e-1})$. Let $u\in J$ be a monomial of degree $r$ and $v$ be a non-constant monomial in $x_e, \ldots, x_{n+e}$. Put $I=(uv)+J^{r+1}$. Then $V(I)\subset \bP^{n+e}$ is a subscheme of almost maximal degree. The Betti numbers of the ideal $I$ are as follows:
\begin{enumerate}[(a)]
\item If $\deg(uv)=r+1$ then
$$\beta_{ij}^{S_0}(I)=\begin{cases}
\binom{e+r}{i+r+1}\binom{r+i}{r}+\binom{e}{i} &\mbox{ if } 0\leq i<e, j=r+1,\\
0 &\mbox{ otherwise.}
\end{cases}
$$
\item If $\deg(uv)>r+1$ then
$$\beta_{ij}^{S_0}(I)=\begin{cases}
\binom{e+r}{i+r+1}\binom{r+i}{r}&\mbox{ if } 0\leq i<e, j=r+1,\\
\binom{e}{i} &\mbox{ if } 0\leq i\leq e, j=\deg(uv),\\
0 &\mbox{ otherwise.}
\end{cases}
$$
\end{enumerate}
\end{lemma}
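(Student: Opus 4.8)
The plan is to compute the Betti numbers of $I=(uv)+J^{r+1}$ by the same strategy used in Lemma \ref{44}, exploiting two short exact sequences and the fact that every module in sight has a linear resolution concentrated in a single row. Set $K=J^{r+1}$. The key intersection computation is $(uv)\cap K$: since $u\in J$ has degree $r$ and every generator of $K$ has degree $r+1$, a monomial multiple of $uv$ lies in $K$ exactly when the $J$-part reaches degree $r+1$, so $(uv)\cap K = uvJ$ when we track degrees carefully (here $v$ contributes no factors in $x_0,\dots,x_{e-1}$, so the meet is governed by the $J$-degree of $u$). From the short exact sequence
$$0\rightarrow (uv)\rightarrow I\rightarrow I/(uv)\rightarrow 0,$$
together with $I/(uv)\simeq K/\bigl((uv)\cap K\bigr)=K/uvJ$ and $(uv)\simeq S_0[-\deg(uv)]$, I would pass to the long exact $\Tor$ sequence.

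The crucial structural input is a regularity bookkeeping argument identical to that in Lemma \ref{44}: the ideal $I$ has regularity $r+1$ in case (a) and regularity $\deg(uv)$ in case (b), the free summand $S_0[-\deg(uv)]$ contributes a single pure strand, and $K/uvJ$ is generated in a single degree with a linear resolution. This forces the connecting maps $\Tor_{i+1}^{S_0}(k,K/uvJ)\rightarrow \Tor_i^{S_0}(k,S_0[-\deg(uv)])$ to vanish for degree reasons — one source of a map is zero whenever the target is nonzero — so the long exact sequences split into short exact sequences and yield the additive relation
$$\beta_{ij}^{S_0}(I)=\beta_{i,j-\deg(uv)}^{S_0}(S_0)+\beta_{ij}^{S_0}(K/uvJ).$$
A second short exact sequence $0\rightarrow J[-\deg(uv)+1]\xrightarrow{\,*uv\,} K\rightarrow K/uvJ\rightarrow 0$ (with the appropriate degree shift matching $\deg(uv)$) lets me replace $K/uvJ$ by the difference of the known Betti numbers of $K=J^{r+1}$ and of $J$, both computed via Corollary \ref{32} applied to the relevant linear-resolution ideals. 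Assembling everything gives $\beta_{ij}^{S_0}(I)=\beta_{i,j-\deg(uv)}^{S_0}(S_0)+\beta_{ij}^{S_0}(K)-\beta_{i,j-\deg(uv)+1}^{S_0}(J)$, and substituting the binomial formulas produces the stated tables.

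The split into cases (a) and (b) reflects whether the extra generator $uv$ sits in the same degree $r+1$ as the powers-of-$J$ generators or in a strictly higher degree. In case (a), $\deg(uv)=r+1$, the pure summand from $S_0[-\deg(uv)]=S_0[-r-1]$ lands in the same row $j=r+1$ as everything else, and the net effect is to \emph{add} $\binom{e}{i}$ to the ACM-type count $\binom{e+r}{i+r+1}\binom{r+i}{r}$ (contrast the minus sign in Lemma \ref{44}, which arose because there the extra generator had lower degree $r$ and subtracted). In case (b), $\deg(uv)>r+1$, the summand occupies a separate higher row $j=\deg(uv)$, contributing the clean $\binom{e}{i}$ strand there while leaving the row $j=r+1$ equal to the pure quantity $\binom{e+r}{i+r+1}\binom{r+i}{r}$.

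The main obstacle I anticipate is pinning down the intersection $(uv)\cap K$ and hence the correct module $K/uvJ$ precisely enough to justify that it has a single-degree linear resolution, since the presence of the $x_e,\dots,x_{n+e}$-factor $v$ makes the combinatorics of the meet less transparent than in Lemma \ref{44}, where the second generator lived entirely in $x_0,\dots,x_{e-1}$. Once that intersection and the resulting regularity of $K/uvJ$ are correctly established, the vanishing of the connecting homomorphisms and the final binomial substitution via Corollary \ref{32} are routine.
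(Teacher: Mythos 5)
Your two--exact--sequence strategy transplanted from Lemma \ref{44} breaks down at the second sequence, and the formula you assemble does not produce the stated tables. The intersection $(uv)\cap J^{r+1}=uvJ\simeq J[-\deg(uv)]$ is right, but the long exact sequence attached to $0\to J[-\deg(uv)]\xrightarrow{\cdot uv}J^{r+1}\to J^{r+1}/uvJ\to 0$ does not split into short exact sequences of the form $0\to\Tor_i(J[-\deg(uv)])\to\Tor_i(J^{r+1})\to\Tor_i(J^{r+1}/uvJ)\to 0$ as it does in Lemma \ref{44}. There the elements $ux_\ell$ are \emph{minimal generators} of $J^{r+1}$, so $\Tor_0(J[-r])\to\Tor_0(J^{r+1})$ is injective and one gets the subtraction $\beta_{ij}(K/uJ)=\beta_{ij}(K)-\beta_{i,j-r}(J)$. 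Here, precisely because $v\in S_+$ is non-constant, $uvx_\ell=v\cdot(ux_\ell)$ lies in $\mathfrak m\,J^{r+1}$: the module $\Tor_i(J[-\deg(uv)])$ is concentrated in internal degree $i+1+\deg(uv)\geq i+r+2$, while $\Tor_i(J^{r+1})$ is concentrated in degree $i+r+1$, so \emph{every} map $\Tor_i(J[-\deg(uv)])\to\Tor_i(J^{r+1})$ vanishes and the connecting homomorphisms are surjective rather than zero. The sequence therefore splits the other way, giving $\beta_{ij}(K/uvJ)=\beta_{ij}(J^{r+1})+\beta_{i-1,\,j-\deg(uv)+1}(J)$: the $J$-contribution enters with a \emph{plus} sign and a homological shift by one. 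Your final formula $\beta_{ij}(I)=\beta_{i,j-\deg(uv)}(S_0)+\beta_{ij}(K)-\beta_{i,j-\deg(uv)+1}(J)$ is accordingly wrong: in case (a) it yields $\binom{e+r}{i+r+1}\binom{r+i}{r}-\binom{e}{i+1}$ (plus $1$ at $i=0$) in row $r+1$ instead of $\binom{e+r}{i+r+1}\binom{r+i}{r}+\binom{e}{i}$, and in case (b) it yields $-\binom{e}{i+1}$ in row $j=\deg(uv)$ for $i\geq 1$, which is negative. Your own narrative ("the net effect is to add $\binom{e}{i}$") is not what your displayed formula delivers.

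Two further points. First, your structural claim that $K/uvJ$ ``is generated in a single degree with a linear resolution'' is false whenever $\deg(uv)>r+1$: the relations $v\cdot(ux_\ell)=0$ give first syzygies in degree $\deg(uv)+1>r+2$. Second, even for the first sequence, the purely degree-theoretic vanishing of the connecting map $\Tor_1(K/uvJ)_{\deg(uv)}\to\Tor_0(S_0[-\deg(uv)])_{\deg(uv)}$ has a loophole at $\deg(uv)=r+2$, where both sides can be nonzero; there one must instead argue that $uv$ is a minimal generator of $I$. The paper avoids all of this by invoking the Betti splitting criterion of Francisco--H\`a--Van Tuyl \cite[Corollary 2.4]{FHT09} for $I=(uv)+J^{r+1}$, which directly gives $\beta_{ij}(I)=\beta_{ij}((uv))+\beta_{ij}(J^{r+1})+\beta_{i-1,\,j-\deg(uv)+1}((uv)\cap J^{r+1})$ with the correct sign and homological shift; to repair your route you would need to re-examine the second long exact sequence and flip both the sign and the index of the $J$-term as above.
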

\begin{proof}
Denote by $G(I)$ the minimal set of monomials generating $I$. Then
$$G(I)=\{uv\}\cup G(J^{r+1}).$$
Since $(uv)\simeq S_0[-\deg(uv)]$, the ideals $(uv)$ and $J^{r+1}$ have linear resolutions. By \cite[Corollary 2.4]{FHT09}, $I=(uv)+J^{r+1}$ is a Betti splitting, that means,
$$\beta_{ij}^{S_0}(I)=\beta_{ij}^{S_0}(uv)+\beta_{ij}^{S_0}(J^{r+1})+\beta_{i-1, j+1}^{S_0}((uv)\cap J^{r+1}),$$
for all $i, j$.

We have $(uv)\cap J^{r+1}=v.uJ\simeq J[-\deg(uv)]$ and $(uv)\simeq S_0[-\deg(uv)]$. Hence
$$\beta_{ij}^{S_0}(I)=\beta_{i,j-\deg(uv)}^{S_0}(S_0)+\beta_{ij}^{S_0}(J^{r+1})+\beta_{i-1, j-\deg(uv)+1}^{S_0}(J).$$
It worth noting that for any $t>0$, the ideal $J^t$ defines a subscheme of maximal degree. Now using Corollary \ref{32}, we easily complete the proof.
\end{proof}

Now we are ready to prove Theorem \ref{54}.

\begin{proof}[\bf Proof of Theorem \ref{54}]
Let $I_X\subset S_0$ be the saturated homogeneous defining ideal of $X$ and $R=S_0/I_X$. Changing the variables if necessary, we may assume that $Q=(x_e, \ldots, x_{n+e})$ is a minimal reduction of the irrelevant ideal $R_+$ with the reduction number $r_Q(R)=r$. In particular, $S=S_e=k[x_e, \ldots, x_{n+e}]\hookrightarrow R$ is a Noether normalization of $R$. The algebra $R$ is particularly a finitely generated module over $S_{e-i}$ for $i=0, 1, \ldots, e$.

By Corollary \ref{52}, the initial ideal $\ini(I)$ with respect to the degree reverse lexicographic order is generated by all monomials in $x_0, \ldots, x_{e-1}$ of degree $r+1$ and a monomial  $uv$, where $u$ is a monomial in $x_0, \ldots, x_{e-1}$ of degree $r$ and $v$ is a non-constant monomial in $x_e, \ldots, x_{n+e}$. From the proof of $(a)\Rightarrow (e)$ of Corollary \ref{52} and the usage of Proposition \ref{27}(d), we obtain
$$\reg(R)=\reg_{S_0}(R)=\reg_{S_t}(R)=\deg(uv)-1.$$

By the Cancellation Principle (see \cite[Corollary 1.21]{Gre98} or \cite[Section 3.3]{HH11}), there is for each $m\geq 0$ a complex of $k$-vector spaces
$$V_{\bullet, m}: 0\rightarrow V_{m,m}\rightarrow V_{m-1, m}\rightarrow \ldots\rightarrow V_{1, m}\rightarrow V_{0, m}\rightarrow 0$$
such that
$$V_{i,m}\simeq \Tor_i^{S_0}(\ini(I_X), k)_m,$$
$$H_i(V_{\bullet, m})\simeq \Tor_i^{S_0}(I_X, k)_m,$$
for all $i\geq 0$. Consequently,
$$\beta^{S_0}_{ij}(I_X)\leq \beta^{S_0}_{ij}(\ini(I_X)),$$
for any $i, j$.

Now using Lemma \ref{53}, we obtain the vanishing of Betti numbers, namely,
$$\beta_{ij}^{S_0}(I_X)=0,$$
for either $j\not=r+1, \reg(R)+1$, or $j=r+1, i\geq e$, or $j=\reg(R)+1, i>e$. Moreover, it also shows that in the complex $V_{\bullet, m}$ above, almost all vector spaces are zero except $V_{i, r+1+i}$ and $V_{i, \reg(R)+1+i}$.

To ease the presentation, we consider two cases according to the difference between the regularity and the reduction number.

\smallskip
\noindent Case 1: $\reg(R)$ is either $r$ or $r+1$.

Recall the notation
$$\chi_m^{S_i}(R)=\sum_{j=0}^m(-1)^j\beta_{m-j, j}^{S_i}(R)=\sum_{j=0}^{\reg(R)}(-1)^j\beta_{m-j, j}^{S_i}(R),$$
for $m\in \bZ$. From  Corollary \ref{52}(e), we have
$$\chi_m^{S_e}(R)=\begin{cases}
(-1)^m\binom{e+m-1}{m} &\mbox{ if } 0\leq m\leq r,\\
(-1)^{\reg(R)} &\mbox{ if } m=\reg(R)+1,\\
0 &\mbox{ otherwise.}
\end{cases}$$
Combining this with Corollary \ref{27}(a), we have
$$\chi_m^{S_0}(R)=\sum_{j=0}^e\binom{e}{j}\chi_{m-j}^{S_e}(R)
=\begin{cases}
\sum_{j=0}^e\binom{e}{j}(-1)^{m-j}\binom{e+m-j-1}{m-j}& \mbox{ if } m\leq \reg(R),\\
\sum_{j=0}^e\binom{e}{j}(-1)^{m-j}\binom{e+m-j-1}{m-j}+(-1)^{\reg(R)}\binom{e}{m-\reg(R)-1}& \mbox{ if } m> \reg(R),\\
\end{cases}$$
Finally, using Lemma \ref{33} we get
$$\chi_m^{S_0}(R)=\begin{cases}
(-1)^r\binom{e+r}{m}\binom{m-1}{r} & \mbox{ if } m\leq \reg(R),\\
(-1)^r\binom{e+r}{m}\binom{m-1}{r}+(-1)^{\reg(R)}\binom{e}{m-\reg(R)-1} & \mbox{ if } \reg(R)< m\leq e+\reg(R)+1,\\
0& \mbox{ if } m>e+\reg(R)+1.
\end{cases}$$

If $\reg(R)=r$ then $\chi_m^{S_0}(R)=(-1)^r\beta_{m-r,r}^{S_0}(R)$. This proves (a).

On the other hand, if $\reg(R)=r+1$ then $\chi_m^{S_0}(R)=(-1)^r\beta_{m-r,r}^{S_0}(R)+(-1)^{r+1}\beta_{m-r-1, r+1}^{S_0}(R)$. This proves (b).

\medskip
\noindent Case 2: $\reg(R)>r+1$.

This is an application of the Cancellation Principle (see \cite[Corollary 1.21]{Gre98} or \cite[Section 3.3]{HH11}). Using the notations in the first part of the proof, we have $V_{i,m}=0$ for all $m, i\geq 0$ such that $m-i\not=r+1, \reg(R)+1$. In particular, $V_{i, r+2+i}=0$ and $V_{i, \reg(R)+i}=0$. Consequently, we obtain $H_i(V_{\bullet, m})\simeq V_{i, m}$ for all $i, m$, which induces by the Cancellation Principle the equality $\beta_{ij}^{S_0}(I_X)=\beta_{ij}^{S_0}(\ini(I_X))$. Therefore the Betti number of the homogeneous coordinate ring of $X$ can be computed using Lemma \ref{53}, namely,
$$\beta_{ij}^{S_0}(R)=
\begin{cases}
1&\mbox{ if } i=j=0,\\
\binom{e+r}{i+r}\binom{i+r-1}{r} &\mbox{ if } j=r, 1\leq i\leq e,\\
\binom{e}{i-1} &\mbox{ if } j=\reg(R), 1\leq i\leq e+1,\\
0&\mbox{ otherwise.}
\end{cases}$$
This proves (c).
\end{proof}

There are various examples of non-arithmetically Cohen-Macaulay projective varieties of almost maximal degree. Belows we give some of them and show that all the cases (a), (b), (c) in Theorem \ref{54} actually occur.

\begin{example}\label{55}
Let $C_1$ be a smooth elliptic curve in $\bP^3$ of degree $5$. Then, $C_1$ is an isomorphic projection from a complete embedding of an elliptic curve $C_0 \hookrightarrow \bP^4$. So, $C_1$ is $m$-normal for all $m\ge 2$, but not linearly normal. Thus, we have $\beta_{1,1}(C_1)=h^0(\mathcal {I}_{C_1}(2))=0, \reg(C_1)=3$, the reduction number $r(C_1)=2$ and $\deg(C_1)= \binom{e+r}{r}-1=5$.
The Betti table of its homogeneous coordinate ring is

\begin{figure}[!htb]
\begin{tabular}{>{\centering}m{2cm}|>{\centering}m{1cm} >{\centering}m{1cm} >{\centering}m{1cm} c}
			&	0&		1&	2&	3\\
			\hline
		0	&	1&		--&	--&	--\\
		1	&	--&		--&	--&  --\\
		2	&	--&		5&	5&  1
\end{tabular}
\end{figure}
This Betti table corresponds to (a) in Theorem \ref{54}.
\end{example}

\begin{example}\label{56}
Let $C$ be a smooth curve of degree $9$ and genus $4$ in $\bP^5$. Then, $C$ is a projectively normal embedding and consider its isomorphic projection $C_1\subset\bP^4$ from a center $p\notin Z_2(X)$ where $Z_2(X)$ is the Jacobian scheme defined by $6$ quadrics containing $C$. Note that $Z_2(X)$ is a hypersurface of degree $6$. Then, $C_1$ is $m$-normal for all $m\ge 2$, but not linearly normal
(see \cite[Theorem 2.7]{AR02}). Thus, we have $\beta_{1,1}(C_1)=h^0(\mathcal {I}_{C_1}(2))=0$, the regularity $\reg(C_1)=3$, the reduction number $r(C_1)=2$ and the degree $\deg(C_1)= \binom{e+r}{r}-1=9$. The following Betti table of its homogeneous coordinate ring is type (a) in Theorem \ref{54}:

\newpage
\begin{figure}[!htb]
\begin{tabular}{>{\centering}m{2cm}|>{\centering}m{1cm}>{\centering}m{1cm} >{\centering}m{1cm} >{\centering}m{1cm} c}
			&	0&		1&	2&	3& 4\\
			\hline
		0	&	1&		--&	--&	 --&--\\
		1	&  --&		--&	--&  --&--\\
		2	&  --&		11&	18&   9& 1
\end{tabular}
\end{figure}
\end{example}

\begin{example}\label{57}
Let $C_2$ be the smooth rational curve in $\bP^3$ defined by $(s, t)
\mapsto (s^5, s^4t+s^3t^2, st^4, t^5)$. Then, the curve $C_2$ is of type (5) in Naito's list in \cite[Theorem 1]{Naito02}, and its Betti table is

\begin{figure}[!htb]
\begin{tabular}{>{\centering}m{2cm}|>{\centering}m{1cm} >{\centering}m{1cm} >{\centering}m{1cm} c}
			&	0&		1&	2&	3\\
			\hline
		0	&	1&		--&	--&	--\\
		1	&	--&		--&	--&  --\\
		2	&	--&		4&	3&  --\\
		3	&	--&		1&	2&  1
\end{tabular}
\end{figure}

In particular, $C_2$ is not arithmetically Cohen-Macaulay with the reduction number $r(C_2)=2$, $\reg(C_2)=4$ and $\deg(C_2)=5=\binom{2+2}{2}-1$. The Betti table of $C_2$ corresponds to (b) in Theorem \ref{54}.
\end{example}

\begin{example}\label{58}
Let $C_3$ be the rational curve in $\bP^4$ defined as the image of the map $\bP^1\rightarrow \bP^4$,
\[\begin{aligned}
(s,t)\mapsto (&s^9,s^8 t-s t^8,s^7 t^2-s^2 t^7,\\
&s^6 t^3+s^5 t^4+s^4 t^5+s^3 t^6+s^8 t+s^7 t^2,t^9)
\end{aligned}\]
Using Macaulay 2, we find that the Betti table of $C_3$ is

\begin{figure}[!htb]
\begin{tabular}{>{\centering}m{2cm}|>{\centering}m{1cm}>{\centering}m{1cm} >{\centering}m{1cm} >{\centering}m{1cm} c}
			&	0&		1&	2&	3 & 4\\
			\hline
		0	&	1&		--&	--&	--& --\\
		1	&	--&		--&	--&  -- &--\\
		2	&	--&	     10& 15&  6 & --\\
		3	&	--&		--&	--& --  &--\\
		4	&	--&		--&	--& --  &--\\
		5	&	--&		1&	3 & 3  & 1
\end{tabular}
\end{figure}

Let $S_0=k[x_0,x_1,x_2,x_3,x_4]$ and $I$ be the defining ideal of $C_3$. The homogeneous coordinate ring $R=S_0/I$ has a Noether normalization $k[x_0,x_4]\rightarrow R$ and by computation, $R/(x_0,x_4)\simeq k[x_1,x_2,x_3]/(x_1,x_2,x_3)^3$. Hence the reduction number of $C_3$ is $r(C_3)=2$. Since $\deg(C_3)=9$, the curve $C_3$ has an almost maximal degree. Moreover, $C_3$ is not arithmetically Cohen-Macaulay, and $\reg(C_3)=6$ and the Betti table of $C_3$ corresponds to (c) in Theorem \ref{54}.
\end{example}

We have proved in Theorem \ref{51} that if a non-degenerate projective variety has almost maximal degree then its coordinate ring has depth at least the dimension of the variety. In the next example, we will see that projective varieties can not be replaced by projective subschemes in this theorem. The idea of the example comes from the proof of Theorem \ref{51}.

\begin{example}\label{59}
Let $S_0=k[x_0, \ldots, x_{n+e}]$ as from the beginning. Let $r>0$ and let $u$ be a monomial in $x_0, \ldots, x_{e-1}$ of degree $r$. As in the proof of Theorem \ref{51}, we denote the set of all monomials in $x_0, \ldots, x_{e-1}$ of degree $r+1$ by $T_{r+1}$. Let $I$ be the monomial ideal generated by $T_{r+1}$ and $ux_e, \ldots, ux_{n+e}$. Clearly
$$I=(T_{r+1}, u)\cap (T_{r+1}, x_e, \ldots, x_{n+e}).$$
Then
$$\deg(S_0/I)=\deg(S_0/(T_{r+1}, u))=\binom{e+r}{r}-1,$$
and $\depth(S_0/I)=0$.
\end{example}


\end{document}